\documentclass[onefignum,onetabnum]{siamart190516}

\usepackage{pgfplots}
\pgfplotsset{compat=newest}


\usepackage{lipsum}
\usepackage{amsfonts}
\usepackage{graphicx}
\usepackage{epstopdf}
\usepackage{algorithmic}
\ifpdf
  \DeclareGraphicsExtensions{.eps,.pdf,.png,.jpg}
\else
  \DeclareGraphicsExtensions{.eps}
\fi


\newsiamremark{remark}{Remark}
\newsiamremark{problem}{Problem}
\newsiamremark{hypothesis}{Hypothesis}
\newsiamremark{assumption}{Assumption}
\crefname{hypothesis}{Hypothesis}{Hypotheses}
\newsiamthm{claim}{Claim}

\headers{Classical System Theory and Turnpike for Descriptor Systems}{J. Heiland and E. Zuazua}

\title{Classical System Theory Revisited for Turnpike in Standard State Space
  Systems and Impulse Controllable Descriptor Systems%
}

\author{Jan Heiland\thanks{Max Planck Institute for Dynamics of Complex
    Technical Systems, Magdeburg, Germany 
  (\email{heiland@mpi-magdeburg.mpg.de}, \url{https://www.janheiland.de}).}
\and Enrique Zuazua\thanks{Chair in Applied Analysis, Alexander von
  Humboldt-Professorship, Department of Mathematics,
  Friedrich-Alexander-Universit\"at Erlangen-N\"urnberg, 91058 Erlangen,
  Germany (\email{enrique.zuazua@fau.de}),\newline \indent \hspace{4pt} Chair of Computational Mathematics, Fundaci\'on Deusto, University of Deusto,
48007 Bilbao, Basque Country, Spain,
\newline \indent \hspace{8pt}Departamento de Matem\'aticas, Universidad Aut\'onoma de Madrid, 28049 Madrid, Spain.}
}

\usepackage{amsopn}


\providecommand{\inva}[1]{\text{~\textup{d}} #1~}
\providecommand{\Rmat}[2]{\mathbb R^{#1\times #2}}
\providecommand{\Rvec}[1]{\mathbb R^{#1}}
\DeclareMathOperator{\rank}{rank}
\DeclareMathOperator{\const}{const}

\def\xfh{\ensuremath{x_h}}

\def\xs{\ensuremath{x_{s}}}
\def\ws{\ensuremath{w_{s}}}
\def\us{\ensuremath{u_{s}}}
\def\PP{\ensuremath{P_+}}

\def\PD{\ensuremath{P_{\Delta}}}
\def\AP{\ensuremath{A_+}}
\def\APmo{\ensuremath{\AP^{-1}}}
\def\APs{\ensuremath{\AP^{*}}}
\def\APms{\ensuremath{\AP^{-*}}}

\def\ye{y_e}
\def\yc{y_c}
\providecommand\mxp[1]{e^{\{#1\}}}

\def\daE{\ensuremath{\mathcal E}}
\def\daA{\ensuremath{\mathcal A}}
\def\daF{\ensuremath{\mathcal F}}
\def\daC{\ensuremath{\mathcal C}}
\def\daB{\ensuremath{\mathcal B}}
\def\daP{\ensuremath{\mathcal P}}
\def\daPP{\ensuremath{\mathcal P_+}}
\def\daPPs{\ensuremath{\mathcal P_+^*}}
\def\daPs{\ensuremath{\mathcal P^*}}
\def\daPD{\ensuremath{\mathcal P_\Delta}}
\def\daPDs{\ensuremath{\mathcal P_\Delta^*}}
\def\daAP{\ensuremath{\mathcal A_+}}
\def\daAPs{\ensuremath{\mathcal A_+^*}}
\def\daSi{\ensuremath{S_{1}}}

\def\bo{B_1}
\def\bo{B_1}
\def\co{C_1}
\def\ct{C_2}
\def\bt{B_2}
\def\bt{B_2}
\def\ao{A_{11}}
\def\ato{A_{21}}
\def\aot{A_{12}}
\def\at{A_{22}}

\def\apo{A_{+;1}}
\def\apto{A_{+;21}}
\def\apot{A_{+;12}}
\def\apt{A_{+;2}}
\def\aptms{A_{+;2}^{-*}}
\def\aptmo{A_{+;2}^{-1}}

\def\boo{B_1B_1^*}
\def\bto{B_2B_1^*}
\def\btos{B_1B_2^*}
\def\btt{B_2B_2^*}

\def\daPDii{\ensuremath{P_{\Delta;1}}}
\def\daPDit{\ensuremath{P_{\Delta;21}}}
\def\daPPii{\ensuremath{P_{+;1}}}
\def\daPPtt{\ensuremath{P_{+;2}}}
\def\ddaPDii{\ensuremath{\dot P_{\Delta;1}}}
\def\daPo{\ensuremath{P_{1}}}
\def\daPt{\ensuremath{P_{2}}}
\def\daPto{\ensuremath{P_{21}}}

\def\bA{\bar A}
\def\bAs{\bar A^*}
\def\bAms{\bar A^{-*}}
\def\bB{\bar B}
\def\bC{\bar C}
\def\bW{\bar W}
\def\bsigma{\bar \lambda}

\def\xsss{x_{\textup{sss}}}
\def\xfss{x_{\textup{fss}}}
\def\bsss{B_{\textup{sss}}}
\def\bfss{B_{\textup{fss}}}

\def\tbs{\bar{\tilde{S}}}



\begin{document}

\maketitle

\begin{abstract}
  The concept of turnpike connects the solution of long but finite time horizon
  optimal control problems with steady state optimal controls. A key ingredient
  of the analysis of turnpike phenomena is the linear quadratic regulator problem and
  the convergence of the solution of the associated differential Riccati
  equation as the terminal time approaches infinity. This convergence has been
  investigated in linear systems theory in the 1980s. We extend classical system
  theoretic results for the investigation of turnpike properties of standard
  state space systems and descriptor systems. We present conditions for turnpike
  phenomena in the nondetectable case and for impulse controllable descriptor systems. For
  the latter, in line with the theory for standard linear systems, we establish
  existence and convergence of solutions to a generalized differential Riccati
  equation.
\end{abstract}

\begin{keywords}
  linear systems, descriptor systems, optimal control, long time behavior, Riccati equations
\end{keywords}

\begin{AMS}
  49N10, 49J15, 93B52
\end{AMS}

\section{Introduction}

The notion of \emph{turnpike} has been used in economics since long and in
control theory (see, e.g., the textbooks \cite{Zas06, Zas15}) for about 40 years with an increasing interest in the last decade. 
Turnpike denotes the property of the control and the solution to a finite time
optimization problem to be close to the optimal values for the associated steady
state problem most of the time.

The backbone of most turnpike results for time autonomous systems is the
turnpike property of a relevant linear quadratic regulator (LQR) optimization problem. And the
turnpike property of the LQR problem is intimately linked to the decay of the solution to
the associated generalized differential Riccati equation towards the stabilizing
solution of an algebraic Riccati equation; see e.g. \cite[Lem. 2.6]{PorZ13}.

In the first part of this manuscript, we use classical mathematical systems
theoretic results as presented by Callier, Willems, and Winkin \cite{CalWW94} to show the
turnpike property of the LQR problem. 
For that we extend the results to the affine linear optimal control problem
using an explicit formula of the state transition matrices of the closed loop
system. 
Having connected the system theoretic toolbox to the investigation of turnpike
behaviors, we can immediately provide new general results for cases where the
system is not detectable which is a current research issue; see \cite{GrG21, PigS20}.

In the second part of the paper, we derive turnpike properties of linear
quadratic optimal control problems that are constrained by descriptor systems.
Descriptor systems are also commonly referred to as differential algebraic
equations (DAEs). To our best knowledge turnpike phenomena of DAEs have not been addressed
so far.

In analogy with the standard LQR case, we will link the asymptotic behavior of an
associated differential Riccati equation to the turnpike property.

Early considerations on Riccati equations for DAEs were made in \cite{KokY72} by
examining LQ-regulators for singularly perturbed ordinary differential
equations. An extensive
investigation and fundamental results for the finite time LQR problem for DAEs
has been provided by Bender and Laub \cite{BenL87} who expanded also on the work
of Cobb \cite{Cob83} and Pandolfi \cite{Pan81}. 

Bender and Laub defined several equivalent relevant Riccati equations in
standard state space form; see \cite[Sec. IV]{BenL87}. A generalized Riccati
equation which, in particular, can be stated in the original system
coefficients is not addressed in \cite{BenL87} apart from noting
that the most obvious symmetric formulation is not well suited. 
We also mention the extension of the work by \cite{BenL87} to endpoint
constraints \cite{ZhaJZ90}. 
The nonsymmetric differential Riccati equation that is formulated in the original
coordinate system and that also is a main subject of this paper has been treated in
\cite{KatM92}.
There the relation the LQR problem for descriptor system has been discussed and
the existence of solutions under general conditions has been shown.
Before, this nonsymmetric differential generalized Riccati
equation has been considered in \cite{Kur84}, where necessary conditions for
existence of solutions in general and sufficient conditions for some special
cases were derived. 

The related nonsymmetric generalized algebraic Riccati has been investigated in
\cite{KawTK99} and applied in the context of model reduction for infinite
time-horizon control systems in
\cite{MRS11}. 

Apart from this branch, the literature on the DAE LQR optimization
problem on finite time horizons has been enriched with results on suitable
reformulations of the optimality conditions \cite{KunM97}, on particularly
structured cases \cite{Bac03,Hei16}, and on the problem with time-varying
coefficients \cite{KunM08,KunM11,KurM07}. 
In the course of the investigations, several formulations of generalized
differential Riccati equations have been proposed; see the discussion in
\cite{KurM07}. 

More recently based on a generalized \emph{Kalman-Yakobovich-Popov} inequality
\cite{ReiRV15} and the \emph{Lur'e} equations, which provide a true
generalization (cp. \cite[Sec. 6]{ReiV19}) to the presented Riccati based
approach for DAEs, the linear quadratic regulation problem for DAEs has been
considered in great generality; see \cite{ReiV19}. 

This work contributes to the theory on Riccati equations for descriptor systems in the
following respects. 
We show that under the conditions used in \cite{BenL87, KatM92} and an
additional definiteness condition on the optimization problem, the
solution of the nonsymmetric generalized differential Riccati equations 
has a distinguished structure and converges to the stabilizing solution of the
associated algebraic Riccati equation. This structure also implies that the
provided optimal feedback gains make the closed-loop system
\emph{impulse-free} so that they are a \emph{best choice} according to a
conjecture stated in \cite[Sec. VII]{BenL87}. With the convergence of the gains and the closed-loop being impulse-free, we
then can show that the DAE constrained LQ optimization problem has the turnpike
property. 

The line of arguments and results in this paper are as follows. In Section
\ref{sec:basic-notions-results}, we introduce the linear quadratic regulator
(LQR) problem for standard state space systems, the notion of turnpike, and
classical results on the asymptotic behavior of the solutions and the controls
that immediately imply well known turnpike results. 
Next, in Section \ref{sec:ode-lqr-affine}, we derive explicit formulas for the
solutions to the affine LQR problem, i.e. the LQR problem with nonzero target
states. Then the arguments of the first section can be applied to conclude turnpike
properties also in this case. In the second part of the paper, we consider the
LQR problem with DAE constraints. Therefore, we introduce the relevant concepts
in Section \ref{sec:lqr-dae} and prove existence and asymptotic decay of
solutions of the generalized differential Riccati equation in Section \ref{sec:exist-decay-gDRE}. Finally,
we can prove turnpike properties of the affine LQR problem with DAE constraints
in Section \ref{sec:dae-affine-lqr}. We conclude the paper with summarising
remarks and an overview of related open research questions.

\section{Basic Notations, Notions and Results for the Linear Quadratic Regulator
Problem}\label{sec:basic-notions-results}

%

We consider the finite time horizon linear quadratic optimization problem.
\begin{problem}[Finite horizon optimal control problem]\label{prb-fiti-optcont}
  For coefficients $A\in \Rmat nn$, $B\in\Rmat nm$, $C\in\Rmat kn$, and $F\in
  \Rmat \ell n$$,$ for an initial value $x_0\in \Rvec n$, for target outputs
  $\yc \in \Rvec k$ and $\ye \in \Rvec \ell$ and a terminal time $t_1>0$,
  consider the optimization of the cost functional
\begin{equation*}
  \frac 12 \int_{0}^{t_1} \|Cx(s)-\yc \|^2+ \|u(s)\|^2\inva s +\frac 12  \|F x(t_1)-\ye \|^2
  \to \min_u
\end{equation*}
subject to
\begin{equation*}
  \dot x(t) = Ax(t) + Bu(t), \quad x(0)=x_0.
\end{equation*}
\end{problem}
We will investigate how solutions to Problem \ref{prb-fiti-optcont} will relate
to solutions of the related state optimization problem in particular for large time
horizons $t_1$.
\begin{problem}[Steady state optimal control problem]\label{prb-stst-optcont}
Consider
\begin{equation*}
  \frac 12 \|Cx-\yc\|^2+ \frac 12\|u\|^2\inva s
  \to \min_u
\end{equation*}
subject to
\begin{equation*}
  0 = Ax + Bu. 
\end{equation*}
\end{problem}


If $\yc=0$ and $\ye=0$, then we will refer to Problems \ref{prb-fiti-optcont}
and \ref{prb-stst-optcont} as \emph{homogeneous} LQR problems, otherwise as
\emph{affine} LQR problems.

\begin{definition}\label{def:turnpike}
  The finite time optimal control problem has the \emph{(exponential) turnpike} property, if
  for some constant vectors $x_s$ and $u_s$ it holds that
  \begin{equation*}
    \|x(t) - \xs\| \leq \const (\mxp{\lambda t} + \mxp{\lambda (t_1 -t)})
  \end{equation*}
  and
  \begin{equation*}
    \|u(t) - \us\| \leq \const (\mxp{\lambda t} + \mxp{\lambda (t_1 -t)})
  \end{equation*}
  for $t\leq t_1$ and for constants $\const>0$ and $\lambda<0 $ independent of $t_1$.
\end{definition}

\begin{remark}
  Throughout this manuscript, the notation $\const$ will be used to denote a generic constant
  value that is independent of $t$ and $t_1$ but unspecified otherwise.
\end{remark}

In general terms, the \emph{turnpike property} means the existence of a
trajectory -- the \emph{turnpike} -- that is defined on the whole timeline and
only depends on the optimization criterion such that the solution to the
optimal control problem on finite time interval is close to 
the turnpike except at the beginning or the end of the interval; cp. the preface
in \cite{Zas06}. In general considerations, the \emph{closedness} is
characterized through the measures of the time intervals in which the solution
leaves an $\epsilon$-neighborhood of the turnpike (cp. \cite[p. xviii]{Zas06})
which can be time-dependent; see, e.g., \cite{FauFOW20,TreZZ18}.

In this work, we consider the exponential turnpike property (as in Definition
\ref{def:turnpike}) with a steady state
as the turnpike, which is the solution to an associated steady state
optimal control problem; see also, e.g., \cite{TreZ15}. When considering differential algebraic equations
below, however, we will lay out that the notion of \emph{an} associated steady state
problem might be not uniquely defined.

We start with the fundamental assumption for our considerations.

\begin{assumption}\label{ass:exist-are-sol-stab}
  We assume that $A$, $B$, and $C$ in Problems \ref{prb-fiti-optcont} and
  \ref{prb-stst-optcont} are such that the \emph{algebraic Riccati equation} 
  \begin{equation}\label{eqn:alg-ric}
    A^*X + XA - XBB^*X + C^*C = 0
  \end{equation}
  has a stabilizing solution $\PP\in\Rmat nn$ which means that the eigenvalues of 
  \begin{equation}\label{eqn:app}
    \AP := A - BB^*\PP 
  \end{equation}
  all have negative real part.
\end{assumption}

We note that \AP~is invertible and that its \emph{spectral abscissa} $\lambda$,
i.e. the maximum real part among the eigenvalues of $\AP$, is strictly less than
zero.

\begin{remark}\label{rem:are-detectability}
  Assumption \ref{ass:exist-are-sol-stab} requires that $(A, B)$ are
  \emph{stabilizable}. Detectability of $(C,A)$ is not required, since \PP~may
  exist in the case where detectability is not given, see, e.g.,
  \cite{Mar71,Kuc73}.
\end{remark}

\begin{lemma}\label{lem:steady-state-sols}
  Under Assumption \ref{ass:exist-are-sol-stab}, the solution to the steady
  state optimal control problem is given as
\begin{align*}
  \xs &= (A-BB^*\PP)^{-1}BB^*(A^*-\PP BB^*)^{-1}C^*\yc \\
      &= \APmo BB^* \APms C^*\yc
  \intertext{and}
  u_s &= -B^*\PP \xs - B^*\ws ,
  \intertext{where}
    w_s &= \APms C^*\yc.
\end{align*}
\end{lemma}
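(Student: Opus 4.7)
My approach is to write down the Lagrangian first-order conditions for Problem~\ref{prb-stst-optcont}, introduce a Riccati-based substitution for the costate, and use the algebraic Riccati equation~\eqref{eqn:alg-ric} to decouple the resulting system.

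First, I will introduce a multiplier $\lambda \in \Rvec n$ for the constraint $Ax+Bu=0$ and form the Lagrangian $L(x,u,\lambda) = \tfrac 12 \|Cx - \yc\|^2 + \tfrac 12 \|u\|^2 + \lambda^*(Ax + Bu)$. Stationarity in $u$ gives $u = -B^*\lambda$, stationarity in $x$ gives $A^*\lambda + C^*Cx = C^*\yc$, and feasibility combined with the expression for $u$ reads $Ax = BB^*\lambda$. Since the cost is convex in $(x,u)$ (strictly so in $u$) and the constraint is linear, these KKT conditions are necessary and sufficient for optimality.

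Next, I will use the standard LQR-style ansatz $\lambda = \PP x + \ws$ with a vector $\ws$ to be determined. Exploiting self-adjointness of $\PP$ (which follows from uniqueness of the stabilizing solution of~\eqref{eqn:alg-ric}), the feasibility relation becomes $(A - BB^*\PP)x = BB^*\ws$, i.e. $\AP x = BB^*\ws$. Substituting the ansatz into the $x$-stationarity condition and using~\eqref{eqn:alg-ric} in the rearranged form $A^*\PP + C^*C = -\PP\AP$ turns it into $-\PP\AP x + A^*\ws = C^*\yc$. Replacing $\AP x$ here by $BB^*\ws$ cancels the $x$-term and leaves $(A^* - \PP BB^*)\ws = \APs\ws = C^*\yc$. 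By Assumption~\ref{ass:exist-are-sol-stab}, $\AP$ (and hence $\APs$) has spectrum strictly in the open left half plane and is invertible, so $\ws = \APms C^*\yc$ and $\xs = \APmo BB^*\ws = \APmo BB^*\APms C^*\yc$, while $\us = -B^*\lambda = -B^*\PP\xs - B^*\ws$ gives the stated feedback-plus-affine formula.

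The main subtlety is structural rather than computational: one has to justify that the affine ansatz $\lambda = \PP x + \ws$ genuinely captures the solution of the coupled KKT system. This is exactly what the algebraic Riccati identity achieves, by collapsing the coupled optimality system into the two triangular affine equations $\APs\ws = C^*\yc$ and $\AP\xs = BB^*\ws$. The supporting facts I will need are the self-adjointness of $\PP$ and the invertibility of $\AP$, both of which are consequences of Assumption~\ref{ass:exist-are-sol-stab}.
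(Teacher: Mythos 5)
Your proposal is correct and follows essentially the same route as the paper: both set up the KKT system \eqref{eqn:fon} for the steady state problem, use the ansatz $\lambda_s = \PP \xs + \ws$, and invoke the algebraic Riccati equation \eqref{eqn:alg-ric} together with the invertibility and symmetry properties guaranteed by Assumption \ref{ass:exist-are-sol-stab}. The only difference is stylistic: you derive $\ws$, $\xs$, $\us$ by solving the resulting triangular system $\APs \ws = C^*\yc$, $\AP \xs = BB^*\ws$, whereas the paper verifies the stated formulas by a direct chain of identities; your direction of argument is arguably the cleaner of the two.
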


\begin{proof} 
  For Problem \ref{prb-stst-optcont} as a linear-quadratic constrained
  optimization problem, it holds that $(\xs, \us)$ is an optimal solution, if,
  and only if, there exists a \emph{Lagrange-multiplier} $\lambda_s$ such that
  \begin{subequations}\label{eqn:fon}
    \begin{align}
      0 &= A\xs + B\us, \label{eqn:fon-varl} \\
      0 &= A^*\lambda_s + C^*(C\xs-\yc), \label{eqn:fon-varx}\\
      0 &= B^* \lambda_s + \us. \label{eqn:fon-varu}
    \end{align}
  \end{subequations}
  One can confirm directly that with $\lambda_s:= \PP \xs + \ws$, with $\AP$
  invertible, and with the vectors $\xs$ and $\us$, relations
  \eqref{eqn:fon-varl} and \eqref{eqn:fon-varu} are fulfilled. To see that
  \eqref{eqn:fon-varx} is fulfilled too, we compute
  \begin{align}
    A^*\lambda_s + C^*(C\xs-\yc)   \notag
    &= A^*\PP \xs - A^*\ws + C^*(C\xs-\yc)\notag \\
&= A^*\PP \APmo BB^* \APms C^*\yc - A^*\APms C^*\yc +\notag \\
&\phantom{= A^*\PP \APmo BB^* \APms C^*\yc}+C^*C\APmo BB^* \APms C^*\yc-C^*\yc
\notag \\
&= (A^*\PP \APmo BB^* \APms  - A^*\APms  + C^*C\APmo BB^* \APms - I)C^*\yc
\notag  \\
&= ((A^*\PP   + C^*C)\APmo BB^* \APms - A^*\APms - I)C^*\yc \notag  \\
&= ((-\PP A   +\PP BB^*\PP )\APmo BB^* \APms - A^*\APms - I)C^*\yc
\label{eqn:here-we-use-ARE} \\
&= (-\PP (A   -\PP BB^*\PP )\APmo BB^* \APms - A^*\APms - I)C^*\yc  \notag \\
&= (-\PP BB^* \APms - A^*\APms - I)C^*\yc  \notag \\
&= (-(\PP BB^* - A^*)\APms - I)C^*\yc  \notag \\
&= (-(-I) - I)C^*\yc  =0, \notag
  \end{align}
where in \eqref{eqn:here-we-use-ARE} we have used, that $\PP$ fulfills the ARE
\eqref{eqn:alg-ric}.
\end{proof}

The turnpike property is intimately linked to the convergence of the solution
$P$ to the differential Riccati equation towards the stabilizing solution \PP~of
the associated algebraic Riccati equation. In fact, this convergence appears as
a necessary condition for the turnpike property in linear quadratic systems in the
fundamental work by Porreta and Zuazua \cite[Cor. 2.7]{PorZ13}. On the other
hand, basic system theoretic investigations of the convergence of $P$
towards \PP, as presented in \cite{CalWW94}, resulted in the formula
\begin{equation}\label{eqn:calww63}
  \| \xfh(t) -  \mxp{t\AP}x_0 \|  \leq \const \mxp{\lambda t_1}\mxp{\lambda (t_1-t)};
\end{equation}
with $\lambda<0$ being the \emph{spectral abscissa} of $\AP$~and where $x_h$ is
the solution to Problem \ref{prb-fiti-optcont} with $\yc=0$ and $\ye=0$; cp.
\cite[Thm. 4]{CalWW94}. 
Since $\mxp{t\AP}x_0$ goes to zero exponentially with rate $\lambda$ and since
$x_s=0$ for the homogeneous problem with $y_c=0$, by means of \eqref{eqn:calww63}, one
can directly infer the turnpike property for the homogeneous case:
\begin{equation}\label{eqn:tp-from-calww}
  \begin{split}
    \| \xfh(t) -  0 \|  &\le \| \xfh(t) -  \mxp{t\AP}x_0 \|  + \| \mxp{t\AP}x_0 \| \\
                        &\leq \const  \mxp{\lambda t_1}\mxp{\lambda (t_1-t)} +
                        \const \mxp{\lambda t} \le \const (\mxp{\lambda (t_1-t)} + \mxp{\lambda t}).
\end{split}
\end{equation}

In view of extending this result from \cite{CalWW94} to the affine case,
we recall well-known links between the solution to the
finite time optimal control problem and the differential Riccati equation
combined with a feedforward term.  

\begin{theorem}[Ch. 3.1 of \cite{Loc01}]\label{thm:locatelli}
  The solution to Problem \ref{prb-fiti-optcont} is given as $(x, u)$ where 
  \begin{equation*}
    u(t) = -B^*(P(t)x(t)+w(t)),
  \end{equation*}
  where $P$ is the unique solution to the differential Riccati equation (DRE)
\begin{equation}\label{eqn:DRE}
  -\dot P(t) = A^*P(t) + P(t)A - P(t)BB^*P(t) + C^*C, \quad P(t_1)=F^*F,
\end{equation}
where $w$ is the solution to
\begin{equation}\label{eqn:w-feedforward}
  -\dot w(t) = (A^* - P(t)BB^*)w(t) - C^*\yc, \quad w(t_1) = -F^*\ye,
\end{equation}
and where $x$ solves
\begin{equation}\label{eqn:optix-closedloop}
  \dot x(t) = (A-BB^*P(t))x(t) - BB^*w(t), \quad x(0)=x_0.
\end{equation}
\end{theorem}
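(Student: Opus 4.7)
The plan is to derive the representation via Pontryagin's first-order necessary conditions, which are also sufficient here because the cost is strictly convex in $u$ and the dynamics are affine linear. First, I would form the Hamiltonian $H(x,u,\lambda) = \frac{1}{2}\|Cx-\yc\|^2 + \frac{1}{2}\|u\|^2 + \lambda^*(Ax+Bu)$ and read off the adjoint equation $-\dot{\lambda}(t) = A^*\lambda(t) + C^*(Cx(t)-\yc)$ with transversality $\lambda(t_1) = F^*(Fx(t_1)-\ye)$, along with the pointwise optimality condition $u(t) = -B^*\lambda(t)$. The existence and uniqueness of a minimizer follows from coercivity and strict convexity of the cost in $u$, so these conditions characterize the optimum.

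Next, I would make the linear ansatz $\lambda(t) = P(t)x(t) + w(t)$ with $P$ symmetric, substitute $u = -B^*(Px+w)$ into the state equation to get \eqref{eqn:optix-closedloop}, and then differentiate the ansatz: $\dot\lambda = \dot P x + P\dot x + \dot w$. Plugging in $\dot x = (A-BB^*P)x - BB^*w$ and matching with the adjoint equation yields, after collecting the terms linear in $x$ and the terms independent of $x$, exactly the Riccati equation \eqref{eqn:DRE} and the feedforward equation \eqref{eqn:w-feedforward}. Matching the terminal conditions forces $P(t_1)=F^*F$ and $w(t_1)=-F^*\ye$. Since \eqref{eqn:DRE} and \eqref{eqn:w-feedforward} are to be solved backward from $t_1$ and are independent of $x_0$, the ansatz decouples as claimed.

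The remaining technical point is to confirm that $P$ and $w$ exist and are unique on all of $[0,t_1]$. Local existence of solutions to the DRE is immediate from smoothness of its right-hand side. For global existence one shows that $P(t)$ stays symmetric and uniformly bounded: symmetry because $P^*$ satisfies the same equation with the same terminal datum, and boundedness because $\tfrac{1}{2}x_0^* P(0) x_0$ equals the optimal value of the homogeneous problem started at $x_0$ at time $0$ with horizon $t_1$, which is bounded below by $0$ and above by the cost of any admissible control (e.g., $u\equiv 0$). With $P$ bounded, \eqref{eqn:w-feedforward} is a linear ODE with bounded coefficients and hence has a unique global solution.

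The main obstacle is the global boundedness of $P$ on $[0,t_1]$; the cleanest route is the value-function identification just indicated, which circumvents any sign/definiteness argument and works irrespective of detectability. Once $P$ and $w$ are available, the closed-loop equation \eqref{eqn:optix-closedloop} is a linear ODE with continuous coefficients and thus admits a unique solution $x$, and the triple $(x,u,\lambda) = (x,-B^*(Px+w),Px+w)$ satisfies the necessary and sufficient optimality system, proving the theorem.
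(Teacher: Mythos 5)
Your proposal is correct and is essentially the classical sweep-method derivation that the paper defers to by citing Locatelli, Ch.~3.1 (the paper gives no proof of its own): Pontryagin conditions, sufficiency by convexity, the affine ansatz $\lambda = Px + w$ producing \eqref{eqn:DRE} and \eqref{eqn:w-feedforward} by matching coefficients, and global existence of $P$ via the value-function bound $0 \le x^*P(t)x \le$ (cost of $u\equiv 0$), which correctly avoids any detectability or definiteness hypothesis. The only point worth making explicit is the order of the final step: the identification of $\tfrac12 x^*P(t)x$ with the optimal cost-to-go must first be established (by completion of squares) on the maximal backward interval of existence of the DRE solution, and the resulting uniform bound then rules out finite-time blow-up on $[0,t_1]$; your sketch is consistent with this.
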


In what follows we will use the abbreviation $S=F^*F$.

Note that in Theorem \ref{thm:locatelli} that characterizes the optimal controls
for finite times, stability does not play a role so that the coefficients
$(A,B,C)$ and $F$ can be arbitrary. In order to link to the steady state,
however, we will require Assumption \ref{ass:exist-are-sol-stab} to hold. In
this case, namely if \PP~exists, the following quantities are well-defined; see
(see \cite[Lem. 1, Lem. 5]{CalWW94}):

\begin{enumerate}
  \item The \emph{closed loop reachability Gramian}
  \begin{equation}\label{eqn:def-reach-gram}
    W:= \int_0^\infty \mxp{s\AP}BB^*\mxp{s\APs} \inva s,
  \end{equation}
\item the \emph{closed loop reachability Gramian} on $[0,\tau]$
  \begin{equation}\label{eqn:def-fintim-reach-gram}
  W(\tau) = \int_0^\tau \mxp{s\AP}BB^*\mxp{s\APs} \inva s = W-\mxp{\tau\AP}W\mxp{\tau\APs}
\end{equation}
\item as well as the \emph{sliding terminal condition}.
  \begin{equation}\label{eqn:def-tilde-s}
  \tilde S(\tau) := (S-\PP)[I+W(\tau)(S-\PP)]^{-1}
\end{equation}
\end{enumerate}

For the latter, the following Lemma is relevant:

\begin{lemma}[\cite{CalWW94}, Lem. 5]\label{lem:stilde}
  Let Assumption \ref{ass:exist-are-sol-stab} hold and consider $W$, $W(\tau)$,
  and $\tilde S$ as defined in \eqref{eqn:def-reach-gram},
  \eqref{eqn:def-fintim-reach-gram}, and \eqref{eqn:def-tilde-s}. If
  $[I+W(S-\PP)]$ is invertible, then $\tau \to \tilde S(\tau)$ is a decreasing
  function and for any $\tau \ge 0$, meaning that
  \begin{equation*}
    S-\PP = \tilde S(0) \ge \tilde S(\tau) \ge \tilde S(\infty) =
    [I+W(S-\PP)]^{-1}.
  \end{equation*}
  Moreover, for the spectral norm it holds that
  \begin{equation*}
    K(\tilde S) := \sup_{\tau \ge 0} \|\tilde S(\tau) \| = \max \{\|S-\PP\|, \| \tilde
    S(\infty)\| \}.
  \end{equation*}
\end{lemma}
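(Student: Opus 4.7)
The plan is to reduce both the monotonicity and the norm identity to a single derivative computation that leverages the symmetry of the data. Writing $M:=S-\PP$ and $G(\tau):=I+W(\tau)M$, the sliding terminal condition takes the form $\tilde S(\tau)=MG(\tau)^{-1}$; from \eqref{eqn:def-fintim-reach-gram} one reads off $\dot W(\tau)=\mxp{\tau\AP}BB^*\mxp{\tau\APs}\ge 0$ and $\dot G(\tau)=\dot W(\tau)M$.

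First, I would establish symmetry of $\tilde S(\tau)$. Since $S=F^*F$, the stabilizing ARE solution $\PP$, and the Gramian $W(\tau)$ are each symmetric, both $M$ and $W(\tau)$ are symmetric. The push-through identity $M(I+W(\tau)M)^{-1}=(I+MW(\tau))^{-1}M$ then yields $\tilde S(\tau)^* = G(\tau)^{-*}M = (I+MW(\tau))^{-1}M = MG(\tau)^{-1} = \tilde S(\tau)$. Differentiating $\tilde S(\tau)=MG(\tau)^{-1}$ and substituting $\dot G(\tau)=\dot W(\tau)M$ gives
\begin{equation*}
  \tfrac{d}{d\tau}\tilde S(\tau) = -MG(\tau)^{-1}\dot W(\tau)\,MG(\tau)^{-1} = -\tilde S(\tau)\,\dot W(\tau)\,\tilde S(\tau).
\end{equation*}
Since $\tilde S(\tau)$ is symmetric and $\dot W(\tau)\ge 0$, the right-hand side is negative semidefinite, so $\tau\mapsto\tilde S(\tau)$ is decreasing in the Loewner order. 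The endpoint values $\tilde S(0)=M=S-\PP$ (using $W(0)=0$) and $\tilde S(\infty)=\lim_{\tau\to\infty}\tilde S(\tau)$ (using $W(\tau)\to W$) then bracket $\tilde S(\tau)$ as claimed.

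For the norm identity I would use that the spectral norm of a symmetric matrix equals $\max\{\lambda_{\max},-\lambda_{\min}\}$. Loewner monotonicity forces $\lambda_{\max}(\tilde S(\tau))\le\lambda_{\max}(\tilde S(0))$ and $\lambda_{\min}(\tilde S(\tau))\ge\lambda_{\min}(\tilde S(\infty))$, from which $\|\tilde S(\tau)\|\le\max\{\|\tilde S(0)\|,\|\tilde S(\infty)\|\}=\max\{\|S-\PP\|,\|\tilde S(\infty)\|\}$; the reverse inequality in the supremum is trivial since both $\tilde S(0)$ and $\tilde S(\infty)$ lie in the family $\{\tilde S(\tau)\}$.

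The hard part will be to guarantee that $G(\tau)$ is invertible for every $\tau\in[0,\infty]$ and not merely at the endpoint, since the derivative computation above and the very definition of $\tilde S(\tau)$ require this throughout. Because $M=S-\PP$ is symmetric but may be indefinite, a zero of $\det G(\tau)$ on a bounded interval is a priori possible. Ruling it out calls for a continuity argument exploiting that $\tau\mapsto W(\tau)$ grows monotonically from $0$ to $W$ together with the assumed invertibility of $I+WM$; this is the only step where a nontrivial spectral analysis of $W(\tau)M$ is required.
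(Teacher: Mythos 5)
The paper does not prove this lemma at all --- it is imported verbatim from \cite{CalWW94} (Lem.~5) --- so there is no internal proof to compare against; I can only assess your argument on its own terms. The parts you actually carry out are correct and are essentially the standard argument: symmetry of $\tilde S(\tau)$ via the push-through identity, the derivative formula $\tfrac{d}{d\tau}\tilde S(\tau)=-\tilde S(\tau)\dot W(\tau)\tilde S(\tau)\le 0$ giving Loewner monotonicity, the bracketing by the endpoint values, and the reduction of the spectral norm of a symmetric matrix to $\max\{\lambda_{\max},-\lambda_{\min}\}$. (Two small points: the supremum attains $\|\tilde S(\infty)\|$ only in the limit, which is harmless; and your limit $\tilde S(\infty)=(S-\PP)[I+W(S-\PP)]^{-1}$ is the consistent reading --- the displayed formula in the lemma statement has dropped the factor $(S-\PP)$.)

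The step you defer, invertibility of $G(\tau)=I+W(\tau)(S-\PP)$ for every finite $\tau$, is a genuine gap, and the fix you sketch --- ``a continuity argument exploiting that $W(\tau)$ grows monotonically from $0$ to $W$'' --- will not close it: since $S-\PP$ may be indefinite, $\det G(\tau)$ could in principle vanish at an interior $\tau$ while being nonzero at both endpoints, and monotonicity of $W(\tau)$ gives no monotone control on the spectrum of $W(\tau)(S-\PP)$. The argument that actually works is already implicit in the paper's machinery: because $S=F^*F\ge 0$ and $C^*C\ge 0$, the DRE \eqref{eqn:DRE} with $P(t_1)=S$ has a global solution on $(-\infty,t_1]$, so by Radon's lemma the closed-loop fundamental matrix $U$ of Lemma \ref{lem:fundamental-solution} is invertible for all $t\le t_1$; the explicit formula \eqref{eqn:explct-U} identifies $U(t)=\mxp{-(t_1-t)\AP}\bigl[I+W(t_1-t)(S-\PP)\bigr]$, which forces $G(\tau)$ to be invertible for every $\tau\ge 0$ \emph{unconditionally}. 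The hypothesis that $I+W(S-\PP)$ is invertible is needed only to handle the endpoint $\tau=\infty$, i.e.\ to make $\tilde S(\infty)$ well defined and to get a finite uniform bound $K(\tilde S)$. With that supplement your proof is complete.
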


\begin{remark}\label{rem:uniformW}
  For the spectral norm of the Gramians it holds that
  \begin{equation*}
    \|W(\tau)\| \leq \|W\|.
  \end{equation*}
\end{remark}

The condition that $[I+W(S-\PP)]$ is invertible, was shown to be necessary and
sufficient for the convergence of $P(t) \to \PP$ as $t_1 \to \infty$; see
\cite[Thm. 2]{CalWW94}. For what follows we will assume that this condition holds.

\begin{assumption}\label{ass:szero-invertible}
  If Assumption \ref{ass:exist-are-sol-stab} holds and $W$, $W(\tau)$,
  and $\tilde S$ are as defined in \eqref{eqn:def-reach-gram}, then, with $S:=F^*F$, the matrix
  \begin{equation*}
    [I+W(S-\PP)]
  \end{equation*}
  is invertible, where $F$ defines the terminal constraint in
  Problem \ref{prb-fiti-optcont}.
\end{assumption}

\begin{remark}\label{rem:fandc}
Most literature on turnpike properties of optimal control problems assume that
$(C,A)$ is detectable, which is a sufficient condition for Assumption
\ref{ass:szero-invertible}. However, the undetectable subspace for $(C,A)$ can
be compensated for if the nullspace of the terminal cost $F$ only has the
trivial intersection with it, which provides a necessary and sufficient condition
for the convergence of $P(t)$ towards $\PP$; see \cite[Thm. 2]{CalWW94}.
\end{remark}

\begin{remark}
  In \cite[Thm. 8.4]{GrG21}, the turnpike property of an LQ optimization problem
  with an undetectable pair $(C,A)$ has been established by imposing state
  constraints and the absence of unobservable modes on the imaginary axis.
  These assumptions exclude oscillatory modes or unstable modes that are not detected
  by $C$. In our case, unobservable unstable modes are \emph{fixed} by the
  terminal constraint; cp. Assumption \ref{ass:szero-invertible}.
  With the assumption of stabilizability, which, in particular, excludes uncontrollable modes on
  the imaginary axis, the assumptions of \cite[Thm. 8.4]{GrG21} on $(A,B,C)$ are
  equivalent to assuming the existence of a stabilizing solution to the ARE as
  in Assumption \ref{ass:exist-are-sol-stab}.
\end{remark}

We illustrate the implications of Remark \ref{rem:fandc} in a numerical example.
Consider Problem \ref{prb-fiti-optcont} with $t_1=10$, $\yc=0$, and $\ye=1$ and
with the coefficients
\begin{equation}\label{eqn:example-abc}
  A = 
  \begin{bmatrix}
    2 & 0 \\ 0 & -1 
  \end{bmatrix},\quad
  B = 
  \begin{bmatrix}
    1 \\ 1 
  \end{bmatrix},\quad
  C = 
  \begin{bmatrix}
    0 & \sqrt{3}
  \end{bmatrix},
\end{equation}
borrowed from an example in \cite[pp. 31]{Mar71}.
Here, $(A,B)$ is controllable and, thus, stabilizable, while $(C,A)$ is not
detectable. Still, a stabilizing solution to the associated
algebraic Riccati equation \eqref{eqn:alg-ric} exists. 

Then, as implied by the
conditions laid out in Remark \ref{rem:fandc}, the solution to the differential
Riccati equation that starts in $F^*F$ converges to the stabilizing solution if,
and only if, the nullspace of $F$ and the space that is not detected by $(C, A)$ --
which in this case is spanned by $\begin{bmatrix} 1 & 0 \end{bmatrix}^*$ --
intersect only trivially.

Accordingly, with the choice $F=C$, the solution of the differential Riccati
equation converges to 
a symmetric positive
definite solution to the ARE that, however, is not stabilizing. Also, the
associated optimal state $x$ does not satisfy the turnpike property as it can be
seen from the logarithmic plot of $|x|$ in the first row of Figure
\ref{fig:turnpikenno}.

Vice versa, with the choice of $F=\begin{bmatrix} \sqrt{3} & 0 \end{bmatrix}$,
Assumption \ref{ass:szero-invertible} holds, the solution to the DRE converges
to a stabilizing solution of the ARE, and the optimal state $x$ satisfies the
turnpike estimate; cp. the second row of Figure \ref{fig:turnpikenno}.

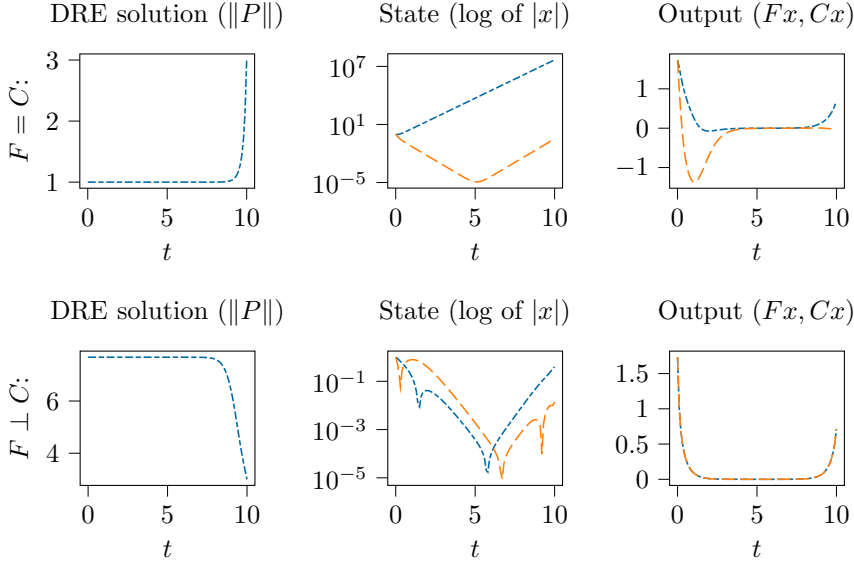
\begin{figure}
  \begin{minipage}[t]{0.3\linewidth}
\begin{tikzpicture}

\definecolor{color0}{rgb}{0,0.419607843137255,0.643137254901961}

\begin{axis}[
tick align=outside,
tick pos=left,
title={DRE solution \(\displaystyle (\|P\|\))},
width=\textwidth,
x grid style={white!69.0196078431373!black},
xlabel={\(\displaystyle t\)},
xmin=-0.5, xmax=10.5,
xtick style={color=black},
y grid style={white!69.0196078431373!black},
ylabel={\(\displaystyle F=C\):},
ymin=0.899999999010623, ymax=3.10000000004711,
ytick={1, 2, 3},
ytick style={color=black}
]
\addplot [semithick, color0, dash pattern=on 3pt off 1pt on 2pt off 1pt]
table {%
0 0.999999999924208
0.1 0.99999999992279
0.2 0.999999999921373
0.3 0.999999999919955
0.4 0.999999999918538
0.5 0.99999999991712
0.6 0.999999999915703
0.7 0.999999999914285
0.8 0.999999999912868
0.9 0.99999999991145
1 0.999999999910033
1.1 0.999999999908615
1.2 0.999999999907197
1.3 0.99999999990578
1.4 0.999999999904363
1.5 0.999999999902945
1.6 0.999999999901527
1.7 0.99999999990011
1.8 0.999999999898692
1.9 0.999999999897275
2 0.999999999895857
2.1 0.99999999989444
2.2 0.999999999893022
2.3 0.999999999891605
2.4 0.999999999890187
2.5 0.999999999879197
2.6 0.999999999834831
2.7 0.999999999790466
2.8 0.9999999997461
2.9 0.999999999701734
3 0.999999999657369
3.1 0.999999999613003
3.2 0.999999999568637
3.3 0.999999999479713
3.4 0.999999999352813
3.5 0.999999999261376
3.6 0.999999999205403
3.7 0.999999999184894
3.8 0.999999999199848
3.9 0.999999999250265
4 0.999999999309138
4.1 0.999999999116916
4.2 0.999999999057737
4.3 0.999999999131599
4.4 0.999999999338504
4.5 0.999999999678452
4.6 1.00000000015144
4.7 1.00000000075747
4.8 1.00000000134576
4.9 1.00000000198017
5 1.00000000292881
5.1 1.00000000431836
5.2 1.00000000634907
5.3 1.000000009292
5.4 1.00000001350644
5.5 1.000000019662
5.6 1.0000000292985
5.7 1.00000004425913
5.8 1.0000000667924
5.9 1.00000010079717
6 1.00000015142002
6.1 1.00000022605992
6.2 1.0000003360225
6.3 1.00000049970556
6.4 1.00000074395531
6.5 1.00000111069266
6.6 1.00000165555603
6.7 1.00000246708924
6.8 1.00000367944972
6.9 1.00000549052077
7 1.00000819061147
7.1 1.00001221892438
7.2 1.00001823241713
7.3 1.00002720029182
7.4 1.00004057828433
7.5 1.00006053581104
7.6 1.00009030789516
7.7 1.00013472205
7.8 1.0002009866307
7.9 1.00029984449098
8 1.0004473321498
8.1 1.000667378079
8.2 1.00099569432795
8.3 1.0014855840138
8.4 1.0022166369285
8.5 1.00330773841745
8.6 1.00493657219476
8.7 1.00736897280616
8.8 1.01100318456327
8.9 1.0164370588275
9 1.02457086455711
9.1 1.03676649776543
9.2 1.05509824728417
9.3 1.08275757332901
9.4 1.12472897699509
9.5 1.18897186948062
9.6 1.28861905356837
9.7 1.44641106725058
9.8 1.70464430973126
9.9 2.1509221472866
10 3
};
\end{axis}

\end{tikzpicture}\\[.1in]
\begin{tikzpicture}

\definecolor{color0}{rgb}{0,0.419607843137255,0.643137254901961}

\begin{axis}[
tick align=outside,
tick pos=left,
title={DRE solution \(\displaystyle (\|P\|\))},
width=\textwidth,
x grid style={white!69.0196078431373!black},
xlabel={\(\displaystyle t\)},
xmin=-0.5, xmax=10.5,
xtick style={color=black},
y grid style={white!69.0196078431373!black},
ylabel={\(\displaystyle F \perp C\):},
ymin=2.76602308433269, ymax=7.91351522901344,
ytick style={color=black}
]
\addplot [semithick, color0, dash pattern=on 3pt off 1pt on 2pt off 1pt]
table {%
0 7.67953830961126
0.1 7.67953830961707
0.2 7.67953830962289
0.3 7.6795383096287
0.4 7.67953830963452
0.5 7.67953830964034
0.6 7.67953830964615
0.7 7.67953830965197
0.8 7.67953830965779
0.9 7.67953830977665
1 7.67953830995972
1.1 7.67953831014278
1.2 7.67953831032585
1.3 7.67953831050891
1.4 7.67953831069198
1.5 7.67953831087504
1.6 7.6795383110581
1.7 7.67953831132597
1.8 7.67953831188892
1.9 7.67953831231655
2 7.67953831260887
2.1 7.67953831276587
2.2 7.67953831278757
2.3 7.67953831267395
2.4 7.67953831242501
2.5 7.67953831241035
2.6 7.67953831313129
2.7 7.67953831334613
2.8 7.6795383130549
2.9 7.67953831225758
3 7.67953831095417
3.1 7.67953830914468
3.2 7.67953830682911
3.3 7.67953830408245
3.4 7.6795383018876
3.5 7.67953829878829
3.6 7.67953829430557
3.7 7.67953828792268
3.8 7.67953827893745
3.9 7.67953826611891
4 7.67953824817779
4.1 7.67953822267346
4.2 7.67953818422167
4.3 7.67953812741553
4.4 7.67953804272939
4.5 7.67953792680743
4.6 7.67953774649642
4.7 7.67953749535083
4.8 7.67953714244923
4.9 7.67953663580898
5 7.67953590680308
5.1 7.67953486239499
5.2 7.67953335777513
5.3 7.67953120928732
5.4 7.67952812311669
5.5 7.67952369612282
5.6 7.6795173852114
5.7 7.67950837549463
5.8 7.67949552108489
5.9 7.67947720438723
6 7.67945113566588
6.1 7.67941406803217
6.2 7.67936143233218
6.3 7.6792867984752
6.4 7.67918106920625
6.5 7.67903151427185
6.6 7.67882027944573
6.7 7.67852238039375
6.8 7.67810296340423
6.9 7.6775134710273
7 7.67668642363281
7.1 7.67552844107023
7.2 7.67391056223093
7.3 7.67165535703501
7.4 7.66851965912327
7.5 7.66417166326487
7.6 7.65816092278309
7.7 7.64987954996281
7.8 7.63851283388325
7.9 7.62297790948845
8 7.60184985108048
8.1 7.57327671298219
8.2 7.53488817182394
8.3 7.48370914938861
8.4 7.41609781026491
8.5 7.32774042329675
8.6 7.21374915524528
8.7 7.06892053758574
8.8 6.88821237915074
8.9 6.66747301064804
9 6.40439539683488
9.1 6.09956863821748
9.2 5.75738454901152
9.3 5.38648116710737
9.4 4.9994248640674
9.5 4.61145168665294
9.6 4.23820469215689
9.7 3.89236592760722
9.8 3.57894135176656
9.9 3.28956882145407
10 3
};
\end{axis}

\end{tikzpicture}
  \end{minipage}
  \begin{minipage}[t]{0.3\linewidth}
    \begin{center}
\begin{tikzpicture}

\definecolor{color0}{rgb}{0,0.419607843137255,0.643137254901961}
\definecolor{color1}{rgb}{1,0.501960784313725,0.0549019607843137}

\begin{axis}[
log basis y={10},
tick align=outside,
tick pos=left,
title={State (log of \(\displaystyle |x|\))},
width=\textwidth,
x grid style={white!69.0196078431373!black},
xlabel={\(\displaystyle t\)},
xmin=-0.5, xmax=10.5,
xtick style={color=black},
y grid style={white!69.0196078431373!black},
ymin=2.6555218381979e-06, ymax=201140615.301002,
ymode=log,
ytick style={color=black}
]
\addplot [semithick, color0, dash pattern=on 3pt off 1pt on 2pt off 1pt]
table {%
0 1
0.1 0.900000000029195
0.2 0.950944671228052
0.3 1.07108215597844
0.4 1.24193536357071
0.5 1.4611409291046
0.6 1.73260626775845
0.7 2.06378589455063
0.8 2.46491198749269
0.9 2.94892100873981
1 3.53169981822434
1.1 4.23252684588309
1.2 5.07467031722162
1.3 6.08614064883476
1.4 7.30061155154291
1.5 8.75853543337537
1.6 10.5084877116412
1.7 12.6087835140362
1.8 15.1294199537324
1.9 18.1544083391805
2 21.7845738474906
2.1 26.1409158638482
2.2 31.3686409281583
2.3 37.6420026780845
2.4 45.170110093308
2.5 54.2038976317484
2.6 65.0444895837869
2.7 78.0532374479303
2.8 93.6637649010209
2.9 112.396421857443
3 134.875629415825
3.1 161.85069385545
3.2 194.220783480146
3.3 233.064900869475
3.4 279.67784961096
3.5 335.613394403374
3.6 402.736053200394
3.7 483.283247798746
3.8 579.939884556607
3.9 695.927851265778
4 835.113413406415
4.1 1002.13608965918
4.2 1202.56330252508
4.3 1443.07595907344
4.4 1731.69114784291
4.5 2078.02937512547
4.6 2493.63524850943
4.7 2992.362297133
4.8 3590.8347559902
4.9 4309.00170709928
5 5170.80204890618
5.1 6204.9624595698
5.2 7445.9549529056
5.3 8935.14594551913
5.4 10722.1751373675
5.5 12866.6101684351
5.6 15439.9322067452
5.7 18527.9186539776
5.8 22233.5023922109
5.9 26680.2028800172
6 32016.2434677792
6.1 38419.4921760762
6.2 46103.390629752
6.3 55324.0687788056
6.4 66388.8825634676
6.5 79666.6591123047
6.6 95599.9909799592
6.7 114719.989232454
6.8 137663.987149583
6.9 165196.784667804
7 198236.141711752
7.1 237883.370192091
7.2 285460.044402998
7.3 342552.053499212
7.4 411062.464468577
7.5 493274.957699199
7.6 591929.949660177
7.7 710315.940118643
7.8 852379.128800425
7.9 1022854.9553831
8 1227425.947488
8.1 1472911.13827104
8.2 1767493.36753221
8.3 2120992.04304764
8.4 2545190.45416894
8.5 3054228.54814346
8.6 3665074.26169996
8.7 4398089.11895325
8.8 5277706.94889211
8.9 6333248.34636813
9 7599898.02528682
9.1 9119877.64244378
9.2 10943853.1861383
9.3 13132623.8425267
9.4 15759148.6352744
9.5 18910978.3931904
9.6 22693174.1114898
9.7 27231808.9855191
9.8 32678170.8517176
9.9 39213805.1180763
10 47056566.2849422
};
\addplot [semithick, color1, dash pattern=on 5pt off 2pt]
table {%
0 1
0.1 0.600000000029195
0.2 0.410944671219294
0.3 0.299798754602146
0.4 0.226455655538505
0.5 0.174628582804408
0.6 0.136402877356893
0.7 0.107420962861693
0.8 0.0850477806074577
0.9 0.0675696262952944
1 0.053807271402336
1.1 0.0429136082759876
1.2 0.0342603496102921
1.3 0.0273705828180813
1.4 0.0218762974774726
1.5 0.0174902392536118
1.6 0.0139864069190321
1.7 0.0111860262938353
1.8 0.00894716055340551
1.9 0.00715683919967455
2 0.00572499575379443
2.1 0.00457974303783782
2.2 0.00366366027453336
2.3 0.00293085854162211
2.4 0.00234465229400669
2.5 0.00187570684343912
2.6 0.0015005618478767
2.7 0.00120045304911496
2.8 0.000960371248819815
2.9 0.000768310341560891
3 0.000614666201323566
3.1 0.000491756041193959
3.2 0.000393434042435878
3.3 0.000314783938537107
3.4 0.000251873134857778
3.5 0.000201556043297333
3.6 0.000161316783638827
3.7 0.000129143379144358
3.8 0.000103427152155289
3.9 8.28822873652928e-05
4 6.648153941414e-05
4.1 5.34048697690571e-05
4.2 4.29984443324204e-05
4.3 3.47419414980497e-05
4.4 2.82225359437663e-05
4.5 2.31142572961451e-05
4.6 1.91616915712167e-05
4.7 1.61672104557934e-05
4.8 1.39810898805872e-05
4.9 1.24940238095321e-05
5 1.16316589350592e-05
5.1 1.13508770114256e-05
5.2 1.16376389417542e-05
5.3 1.25062828229506e-05
5.4 1.40002408509452e-05
5.5 1.61942109308907e-05
5.6 1.91978915735091e-05
5.7 2.31614668239995e-05
5.8 2.82831154700764e-05
5.9 3.48189199916032e-05
6 4.30956707044791e-05
6.1 5.35272053122942e-05
6.2 6.66351008589207e-05
6.3 8.30747526990926e-05
6.4 0.000103668144498478
6.5 0.00012944494784777
6.6 0.000161694006660404
6.7 0.00020202778084376
6.8 0.000252462972080765
6.9 0.000315521364145201
7 0.000394355921830258
7.1 0.000492908457256171
7.2 0.000616106754281818
7.3 0.000770111020830458
7.4 0.000962622016642725
7.5 0.00120326631043818
7.6 0.00150407801700282
7.7 0.00188010126490246
7.8 0.00235014382396152
7.9 0.00293772014477866
8 0.00367223202423193
8.1 0.00459044791601854
8.2 0.00573835855401277
8.3 0.00717350858232629
8.4 0.0089679336821522
8.5 0.0112118741825811
8.6 0.0140184959871573
8.7 0.0175299394858193
8.8 0.0219251579306034
8.9 0.027430237111179
9 0.0343322811207626
9.1 0.0429986460419478
9.2 0.0539045876656058
9.3 0.0676748495418113
9.4 0.0851496404645815
9.5 0.107495831289393
9.6 0.1364075946297
9.7 0.174498192511811
9.8 0.226142999821913
9.9 0.299543908118252
10 0.412840207253802
};
\end{axis}

\end{tikzpicture}\\[.1in]
\begin{tikzpicture}

\definecolor{color0}{rgb}{0,0.419607843137255,0.643137254901961}
\definecolor{color1}{rgb}{1,0.501960784313725,0.0549019607843137}

\begin{axis}[
log basis y={10},
tick align=outside,
tick pos=left,
title={State (log of \(\displaystyle |x|\))},
width=\textwidth,
x grid style={white!69.0196078431373!black},
xlabel={\(\displaystyle t\)},
xmin=-0.5, xmax=10.5,
xtick style={color=black},
y grid style={white!69.0196078431373!black},
ymin=4.86120453067125e-06, ymax=1.7906561371592,
ymode=log,
ytick style={color=black}
]
\addplot [semithick, color0, dash pattern=on 3pt off 1pt on 2pt off 1pt]
table {%
0 1
0.1 0.900000000229206
0.2 0.774969354968253
0.3 0.659104805646576
0.4 0.56097349125966
0.5 0.477849744609044
0.6 0.404712661651945
0.7 0.337734715169625
0.8 0.275029797016837
0.9 0.216339566266657
1 0.162392011998686
1.1 0.114254428814015
1.2 0.0728386350158506
1.3 0.038620719724444
1.4 0.0115635647237431
1.5 0.00881651225342273
1.6 0.0233110245018779
1.7 0.0328680803840352
1.8 0.0384648909592065
1.9 0.0410183728005278
2 0.0413334704507078
2.1 0.0400816020938295
2.2 0.0377997693037643
2.3 0.0349018921170686
2.4 0.0316960244619997
2.5 0.0284032288407493
2.6 0.0251756191578832
2.7 0.0221123099045053
2.8 0.0192728038225085
2.9 0.0166878143322999
3 0.014367760074731
3.1 0.0123092689481922
3.2 0.0105000475965925
3.3 0.00892244815477914
3.4 0.007556020981359
3.5 0.00637929345820264
3.6 0.0053709687965201
3.7 0.00451069756795107
3.8 0.0037795400647977
3.9 0.00316020944881992
4 0.0026371631660924
4.1 0.00219659253076062
4.2 0.00182634691284538
4.3 0.00151581863830368
4.4 0.00125580699841609
4.5 0.00103837402438661
4.6 0.000856700449628337
4.7 0.000704947212125493
4.8 0.000578125644255169
4.9 0.000471977941347771
5 0.000382868407367421
5.1 0.000307685238362431
5.2 0.00024375210761174
5.3 0.000188748486035143
5.4 0.000140637419871614
5.5 9.75993436359182e-05
5.6 5.79704017417401e-05
5.7 2.01836600585405e-05
5.8 1.72885107292605e-05
5.9 5.59927238621961e-05
6 9.755554399748e-05
6.1 0.000143749359897919
6.2 0.000196565675781369
6.3 0.000258298707180574
6.4 0.000331642640727499
6.5 0.000419806511213323
6.6 0.000526651386077055
6.7 0.000656855446172932
6.8 0.000816113637192703
6.9 0.00101137985561834
7 0.00125116113517219
7.1 0.00154587500379266
7.2 0.00190828304475718
7.3 0.00235401559219274
7.4 0.00290220421524762
7.5 0.00357623978667534
7.6 0.00440467382281767
7.7 0.00542227835309632
7.8 0.00667127319268649
7.9 0.00820271681308879
8 0.01007803496859
8.1 0.0123706263287231
8.2 0.0151674337941826
8.3 0.0185703043611612
8.4 0.0226968887036965
8.5 0.0276807807687434
8.6 0.0336706255112968
8.7 0.0408281303837942
8.8 0.0493254486009821
8.9 0.0593434145917527
9 0.0710736739196651
9.1 0.0847296806799055
9.2 0.100573229169854
9.3 0.118963524980841
9.4 0.140433132732771
9.5 0.165786734079165
9.6 0.1961980989312
9.7 0.233231140164514
9.8 0.278595238416338
9.9 0.333219791291828
10 0.395015832938627
};
\addplot [semithick, color1, dash pattern=on 5pt off 2pt]
table {%
0 1
0.1 0.600000000229206
0.2 0.234969354899491
0.3 0.0593860009057852
0.4 0.283399676331438
0.5 0.450378153600843
0.6 0.574047370119666
0.7 0.664563111920408
0.8 0.728358661915081
0.9 0.76921898587712
1 0.78951255481071
1.1 0.791177284914047
1.2 0.77632623598361
1.3 0.747479254679826
1.4 0.707512628157433
1.5 0.659454155263604
1.6 0.606239949535014
1.7 0.550510805563294
1.8 0.494482919505329
1.9 0.439895131204276
2 0.388017041173923
2.1 0.339696774609511
2.2 0.295428943939729
2.3 0.255428218498307
2.4 0.219699150569994
2.5 0.188097234999344
2.6 0.160379256048394
2.7 0.1362428973586
2.8 0.115356639559842
2.9 0.0973814253491475
3 0.0819856656902039
3.1 0.0688550559796984
3.2 0.0576984752404905
3.3 0.0482510187553096
3.4 0.0402750000754026
3.5 0.0335595683484342
3.6 0.0279194281602677
3.7 0.0231930203563679
3.8 0.0192404213039875
3.9 0.0159411405446514
4 0.0131919383176948
4.1 0.0109047412173751
4.2 0.00900470297157019
4.3 0.0074284350173024
4.4 0.00612241614802383
4.5 0.00504158015950875
4.6 0.00414807376392227
4.7 0.00341017306010153
4.8 0.00280134474379596
4.9 0.00229943743765793
5 0.00188598857164223
5.1 0.00154563286399954
5.2 0.00126559939917641
5.3 0.00103528541615982
5.4 0.00084589611117328
5.5 0.000690140939845934
5.6 0.000561978035239978
5.7 0.000456399409684433
5.8 0.000369250565916481
5.9 0.000297078998337749
6 0.000237006823141129
6.1 0.000186623433726074
6.2 0.0001438946464496
6.3 0.000107085285561709
6.4 7.46925648947273e-05
6.5 4.53879660649309e-05
6.6 1.79655968373706e-05
6.7 8.70474572683259e-06
6.8 3.57213729393337e-05
6.9 6.41927266325e-05
7 9.52787623994303e-05
7.1 0.000130232527745514
7.2 0.000170442315176953
7.3 0.000217474022143387
7.4 0.000273112124545379
7.5 0.000339395640469031
7.6 0.000418642155229397
7.7 0.00051344770542157
7.8 0.000626642103850317
7.9 0.000761166875330291
8 0.000919824980680672
8.1 0.00110482684902777
8.2 0.00131702636383983
8.3 0.00155470753559792
8.4 0.0018117602523412
8.5 0.00207509855141469
8.6 0.00232127728507792
8.7 0.00251252932680822
8.8 0.00259296853455646
8.9 0.00248654795167498
9 0.0020994695660694
9.1 0.00133079458576982
9.2 9.53274811599579e-05
9.3 0.00163855528993999
9.4 0.00379779700518367
9.5 0.00615104250482553
9.6 0.00828192021814135
9.7 0.00966030674925317
9.8 0.00997640585540608
9.9 0.0100732600776428
10 0.0139138506814453
};
\end{axis}

\end{tikzpicture}
    \end{center}
    \vfill
  \end{minipage}
  \begin{minipage}[t]{0.3\linewidth}
    \begin{center}
\begin{tikzpicture}

\definecolor{color0}{rgb}{0,0.419607843137255,0.643137254901961}
\definecolor{color1}{rgb}{1,0.501960784313725,0.0549019607843137}

\begin{axis}[
tick align=outside,
tick pos=left,
title={Output \(\displaystyle (Fx, Cx)\)},
width=\textwidth,
x grid style={white!69.0196078431373!black},
xlabel={\(\displaystyle t\)},
xmin=-0.5, xmax=10.5,
xtick style={color=black},
y grid style={white!69.0196078431373!black},
ymin=-1.52547975840725, ymax=1.8871713107106,
ytick style={color=black}
]
\addplot [semithick, color0, dash pattern=on 3pt off 1pt on 2pt off 1pt]
table {%
0 1.73205080756888
0.1 1.55884572720899
0.2 1.34228629711389
0.3 1.14160301089268
0.4 0.971634588561026
0.5 0.827660036046676
0.6 0.700982892447601
0.7 0.584973686153594
0.8 0.476365582028516
0.9 0.374711120461264
1 0.281271215525059
1.1 0.197894475695635
1.2 0.126160216601419
1.3 0.0668930487876146
1.4 0.0200286816181343
1.5 -0.0152706471684817
1.6 -0.0403758788137355
1.7 -0.056929185172407
1.8 -0.0666231454489425
1.9 -0.0710459057343154
2 -0.0715916708737728
2.1 -0.0694233712752719
2.2 -0.0654711209485023
2.3 -0.0604518504270505
2.4 -0.0548991247661295
2.5 -0.0491958354511835
2.6 -0.0436054514934581
2.7 -0.0382996442273117
2.8 -0.0333814754248923
2.9 -0.0289041422908195
3 -0.0248856904403937
3.1 -0.0213202792222987
3.2 -0.0181866159191898
3.3 -0.0154541335319767
3.4 -0.0130874122427702
3.5 -0.0110492603859987
3.6 -0.00930279084143989
3.7 -0.00781275736526862
3.8 -0.00654635542147179
3.9 -0.00547364332791534
4 -0.00456770059152123
4.1 -0.0038046098668037
4.2 -0.00316332564529476
4.3 -0.00262547489660185
4.4 -0.00217512152575724
4.5 -0.00179851656749738
4.6 -0.00148384870562338
4.7 -0.00122100438805539
4.8 -0.00100134298900844
4.9 -0.000817489774466102
5 -0.000663147534173352
5.1 -0.000532926465582672
5.2 -0.000422191034835531
5.3 -0.000326921967664572
5.4 -0.000243591156663031
5.5 -0.000169047021962784
5.6 -0.000100407681151873
5.7 -3.49591247040908e-05
5.8 2.99445789702789e-05
5.9 9.6982242583498e-05
6 0.000168971158763656
6.1 0.000248981194898699
6.2 0.000340461737477443
6.3 0.00044738648436611
6.4 0.00057442190369634
6.5 0.000727126206769709
6.6 0.000912186958562031
6.7 0.00113770700599984
6.8 0.00141355028436759
6.9 0.00175176129568265
7 0.00216707465457379
7.1 0.00267753404871961
7.2 0.00330524318874167
7.3 0.00407727460748717
7.4 0.00502676515474944
7.5 0.00619422901057097
7.6 0.00762911885188885
7.7 0.00939166160034373
7.8 0.0115549841209052
7.9 0.0142075222803693
8 0.0174556686060536
8.1 0.0214265533227977
8.2 0.0262707659519614
8.3 0.032164710665549
8.4 0.0393121644085384
8.5 0.047944518684639
8.6 0.0583192341081908
8.7 0.0707163962027782
8.8 0.0854341830830282
8.9 0.10278580916754
9 0.123103214309443
9.1 0.146756111846683
9.2 0.174197942803455
9.3 0.206050869514305
9.4 0.243237320959223
9.5 0.287151046646024
9.6 0.339825075697263
9.7 0.403968184672156
9.8 0.482541107683862
9.9 0.577153608604944
10 0.684187492443842
};
\addplot [semithick, color1, dash pattern=on 5pt off 2pt]
table {%
0 1.73205080756888
0.1 1.03923048493832
0.2 0.406978860907601
0.3 -0.102859570827151
0.4 -0.490862638254626
0.5 -0.780077844655719
0.6 -0.994279210998558
0.7 -1.15105707468223
0.8 -1.2615542085698
0.9 -1.33232636568578
1 -1.36747585814566
1.1 -1.37035925526553
1.2 -1.34463648397232
1.3 -1.29467204670917
1.4 -1.22544781896526
1.5 -1.14220810217898
1.6 -1.05003839417264
1.7 -0.953512685351297
1.8 -0.856469540058222
1.9 -0.761920717247984
2 -0.67206522951578
2.1 -0.588372072790946
2.2 -0.511697940930028
2.3 -0.442414652125873
2.4 -0.380530091166954
2.5 -0.325793967782087
2.6 -0.277785019955916
2.7 -0.235979620395486
2.8 -0.199803560708056
2.9 -0.168669576418199
3 -0.14200333846779
3.1 -0.119260455314837
3.2 -0.0999366906357845
3.3 -0.083573216001155
3.4 -0.0697583464054377
3.5 -0.0581268774595684
3.6 -0.048357868091853
3.7 -0.0401714896382084
3.8 -0.033325387257537
3.9 -0.0276108653539325
4 -0.0228491074165621
4.1 -0.0188875658318841
4.2 -0.015596603053826
4.3 -0.0128664268706915
4.4 -0.0106043358334574
4.5 -0.00873227298670035
4.6 -0.00718467451265685
4.7 -0.00590659300269849
4.8 -0.00485207142577062
4.9 -0.00398274247084953
5 -0.0032666280285786
5.1 -0.00267711465029539
5.2 -0.00219208246140218
5.3 -0.0017931669411239
5.4 -0.00146513504247705
5.5 -0.00119535917219649
5.6 -0.000973374509773376
5.7 -0.000790506966117881
5.8 -0.000639560740890905
5.9 -0.000514555918982651
6 -0.000410507859420927
6.1 -0.000323241269096523
6.2 -0.000249232838587867
6.3 -0.000185477155335901
6.4 -0.000129371317345303
6.5 -7.86142632766724e-05
6.6 -3.11173265106246e-05
6.7 1.50770618658421e-05
6.8 6.18712328470419e-05
6.9 0.00011118506400387
7 0.000165027657358096
7.1 0.000225569354853354
7.2 0.00029521474964615
7.3 0.000376676055678705
7.4 0.000473044075875675
7.5 0.000587850493159741
7.6 0.000725109483047452
7.7 0.000889317512819818
7.8 0.0010853759620306
7.9 0.00131837970111051
8 0.00159318360060998
8.1 0.00191361623608232
8.2 0.00228115657707828
8.3 0.00269283244256579
8.4 0.00313806080818877
8.5 0.00359417612176283
8.6 0.0040205701962105
8.7 0.00435182844953866
8.8 0.00449115324427921
8.9 0.00430682738775739
9 0.00363638795737678
9.1 0.00230500383699091
9.2 0.000165112040726612
9.3 -0.00283806101318682
9.4 -0.00657797736981103
9.5 -0.0106539181378735
9.6 -0.0143447066020527
9.7 -0.016732142106407
9.8 -0.017279641818491
9.9 -0.0174473982523326
10 -0.0240994963091901
};
\end{axis}

\end{tikzpicture}\\[.1in]
\begin{tikzpicture}

\definecolor{color0}{rgb}{0,0.419607843137255,0.643137254901961}
\definecolor{color1}{rgb}{1,0.501960784313725,0.0549019607843137}

\begin{axis}[
tick align=outside,
tick pos=left,
title={Output \(\displaystyle (Fx, Cx)\)},
width=\textwidth,
x grid style={white!69.0196078431373!black},
xlabel={\(\displaystyle t\)},
xmin=-0.5, xmax=10.5,
xtick style={color=black},
y grid style={white!69.0196078431373!black},
ymin=-0.0865818970679649, ymax=1.81865236493254,
ytick style={color=black}
]
\addplot [semithick, color0, dash pattern=on 3pt off 1pt on 2pt off 1pt]
table {%
0 1.73205080756888
0.1 1.03923048459189
0.2 0.711777049651505
0.3 0.519266675016791
0.4 0.392232701054007
0.5 0.302465577870984
0.6 0.236256713880725
0.7 0.186058565474422
0.8 0.147307077083088
0.9 0.117034025791892
1 0.0931969278854938
1.1 0.0743285498701187
1.2 0.0593406662100984
1.3 0.0474072400736885
1.4 0.0378908587124735
1.5 0.0302939830237913
1.6 0.0242251673990965
1.7 0.0193747658757242
1.8 0.0154969366619744
1.9 0.0123960091154369
2 0.00991598351868804
2.1 0.00793234762714493
2.2 0.00634564573716351
2.3 0.00507639590388671
2.4 0.00406105689930251
2.5 0.0032488195529412
2.6 0.00259904936042189
2.7 0.0020792456731681
2.8 0.00166341179708429
2.9 0.00133075254756406
3 0.00106463309038778
3.1 0.000851746448276871
3.2 0.000681447750926151
3.3 0.000545221774952907
3.4 0.000436257066635318
3.5 0.000349105307563533
3.6 0.000279408865376044
3.7 0.000223682894139159
3.8 0.000179141082415117
3.9 0.000143556332764211
4 0.000115149404030683
4.1 9.24999478116061e-05
4.2 7.44754902301741e-05
4.3 6.01748078282077e-05
4.4 4.88828661730421e-05
4.5 4.00350680161429e-05
4.6 3.31890233603116e-05
4.7 2.8002429926093e-05
4.8 2.42159580183642e-05
4.9 2.16402840290849e-05
5 2.0146624251835e-05
5.1 1.96602956942547e-05
5.2 2.01569819272604e-05
5.3 2.16615172631763e-05
5.4 2.42491284720385e-05
5.5 2.804919612079e-05
5.6 3.32517236035162e-05
5.7 4.01168373169881e-05
5.8 4.89877929905097e-05
5.9 6.03081384901325e-05
6 7.46438912464154e-05
6.1 9.27118391880643e-05
6.2 0.000115415380255127
6.3 0.000143889692501048
6.4 0.000179558493397756
6.5 0.000224205226455442
6.6 0.000280062234815199
6.7 0.000349922380961783
6.8 0.000437278694673727
6.9 0.00054649903357293
7 0.000683044492875668
7.1 0.00085374249144808
7.2 0.00106712820130246
7.3 0.00133387141554709
7.4 0.00166731024130961
7.5 0.00208411838471486
7.6 0.00260513954399634
7.7 0.00325643091418558
7.8 0.00407056850819556
7.9 0.00508828054917524
8 0.00636049244315122
8.1 0.00795088902004278
8.2 0.00993912856759759
8.3 0.0124248813331205
8.4 0.0155329167763959
8.5 0.0194195357323002
8.6 0.0242807472954569
8.7 0.0303627458430468
8.8 0.0379754874997767
8.9 0.0475105643402234
9 0.0594652552408985
9.1 0.074475839601324
9.2 0.0933654845978799
9.3 0.117216277800997
9.4 0.147483503530878
9.5 0.186188241395081
9.6 0.236264884436899
9.7 0.302239735259392
9.8 0.391691165467592
9.9 0.518825267958555
10 0.71506021437085
};
\addplot [semithick, color1, dash pattern=on 5pt off 2pt]
table {%
0 1.73205080756888
0.1 1.03923048459189
0.2 0.711777049651505
0.3 0.519266675016791
0.4 0.392232701054007
0.5 0.302465577870984
0.6 0.236256713880725
0.7 0.186058565474422
0.8 0.147307077083088
0.9 0.117034025791892
1 0.0931969278854938
1.1 0.0743285498701187
1.2 0.0593406662100984
1.3 0.0474072400736885
1.4 0.0378908587124735
1.5 0.0302939830237913
1.6 0.0242251673990965
1.7 0.0193747658757242
1.8 0.0154969366619744
1.9 0.0123960091154369
2 0.00991598351868804
2.1 0.00793234762714493
2.2 0.00634564573716351
2.3 0.00507639590388671
2.4 0.00406105689930251
2.5 0.0032488195529412
2.6 0.00259904936042189
2.7 0.0020792456731681
2.8 0.00166341179708429
2.9 0.00133075254756406
3 0.00106463309038778
3.1 0.000851746448276871
3.2 0.000681447750926151
3.3 0.000545221774952907
3.4 0.000436257066635318
3.5 0.000349105307563533
3.6 0.000279408865376044
3.7 0.000223682894139159
3.8 0.000179141082415117
3.9 0.000143556332764211
4 0.000115149404030683
4.1 9.24999478116061e-05
4.2 7.44754902301741e-05
4.3 6.01748078282077e-05
4.4 4.88828661730421e-05
4.5 4.00350680161429e-05
4.6 3.31890233603116e-05
4.7 2.8002429926093e-05
4.8 2.42159580183642e-05
4.9 2.16402840290849e-05
5 2.0146624251835e-05
5.1 1.96602956942547e-05
5.2 2.01569819272604e-05
5.3 2.16615172631763e-05
5.4 2.42491284720385e-05
5.5 2.804919612079e-05
5.6 3.32517236035162e-05
5.7 4.01168373169881e-05
5.8 4.89877929905097e-05
5.9 6.03081384901325e-05
6 7.46438912464154e-05
6.1 9.27118391880643e-05
6.2 0.000115415380255127
6.3 0.000143889692501048
6.4 0.000179558493397756
6.5 0.000224205226455442
6.6 0.000280062234815199
6.7 0.000349922380961783
6.8 0.000437278694673727
6.9 0.00054649903357293
7 0.000683044492875668
7.1 0.00085374249144808
7.2 0.00106712820130246
7.3 0.00133387141554709
7.4 0.00166731024130961
7.5 0.00208411838471486
7.6 0.00260513954399634
7.7 0.00325643091418558
7.8 0.00407056850819556
7.9 0.00508828054917524
8 0.00636049244315122
8.1 0.00795088902004278
8.2 0.00993912856759759
8.3 0.0124248813331205
8.4 0.0155329167763959
8.5 0.0194195357323002
8.6 0.0242807472954569
8.7 0.0303627458430468
8.8 0.0379754874997767
8.9 0.0475105643402234
9 0.0594652552408985
9.1 0.074475839601324
9.2 0.0933654845978799
9.3 0.117216277800997
9.4 0.147483503530878
9.5 0.186188241395081
9.6 0.236264884436899
9.7 0.302239735259392
9.8 0.391691165467592
9.9 0.518825267958555
10 0.71506021437085
};
\end{axis}

\end{tikzpicture}
    \end{center}
    \vfill
  \end{minipage}
  \caption{Example simulation of the optimal control problem (Problem
    \ref{prb-fiti-optcont}) with coefficients as in \eqref{eqn:example-abc}, with
    the initial value $x(0)= [1 ~ 1]^*$ and choices of
    the endpoint constraint $F$ that illustrate the sufficiency and necessity of
    Assumption \ref{ass:szero-invertible} for the turnpike property as in
  Definition \ref{def:turnpike}.}
  \label{fig:turnpikenno}
\end{figure}

\section{Explicit Formulas and Turnpike Estimates for Solutions to the Affine
Problem}\label{sec:ode-lqr-affine}

In this section, we use an explicit formula of the state transition
matrices to derive formulas for the solution to the finite time optimal control
problem.

\begin{lemma}\label{lem:fundamental-solution}
  Under Assumption \ref{ass:exist-are-sol-stab}, the fundamental solution matrix
  $U$ to
  \begin{equation*}
    \dot U = (A-BB^*P(t))U, \quad U(t_1)=I,
  \end{equation*}
  where $P$ solves the DRE with $P(t_1)=S$ is given as
  \begin{equation}\label{eqn:explct-U}
    U(t) =
    \mxp{-(t_1-t)\AP}\bigl(I-[W-\mxp{(t_1-t)\AP}W\mxp{(t_1-t)\APs}](\PP-S)\bigr).
  \end{equation}
\end{lemma}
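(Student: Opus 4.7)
The plan is to derive formula \eqref{eqn:explct-U} directly from the characterisation of $U$ as the fundamental solution to $\dot U = (A - BB^*P(t))U$ with $U(t_1) = I$, rather than to verify a guess. The derivation proceeds by first establishing the algebraic identity
\begin{equation*}
(P(t) - \PP)U(t) = \mxp{(t_1-t)\APs}(S - \PP)
\end{equation*}
and then using it to reduce the ODE for $U$ to a form that can be integrated by inspection.

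For the identity, I set $Y(t) := (P(t) - \PP)U(t)$ and $Z(t) := \mxp{(t_1-t)\APs}(S - \PP)$, note that $Y(t_1) = Z(t_1) = S - \PP$, and show that both satisfy $\dot Y = -\APs Y$. The equation for $Z$ is immediate. For $Y$, differentiating by the product rule and substituting $\dot U = (A - BB^*P)U$ and $\dot P$ from the DRE \eqref{eqn:DRE} gives
\begin{equation*}
\dot Y = \bigl[-A^*P - PA + PBB^*P - C^*C + (P - \PP)(A - BB^*P)\bigr]U.
\end{equation*}
Expanding the last product, the $PA$ and $PBB^*P$ terms cancel, leaving $\dot Y = [-A^*P - C^*C - \PP A + \PP BB^*P]U$. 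Substituting $C^*C = -A^*\PP - \PP A + \PP BB^*\PP$ from the ARE \eqref{eqn:alg-ric} and regrouping produces $\dot Y = -(A^* - \PP BB^*)(P - \PP)U = -\APs Y$, so uniqueness of solutions to linear ODEs yields $Y \equiv Z$.

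Substituting the identity into $\dot U = (A - BB^*P)U = \AP U - BB^*(P - \PP)U$ delivers the decoupled equation
\begin{equation*}
\dot U = \AP U - BB^*\mxp{(t_1-t)\APs}(S - \PP).
\end{equation*}
Setting $V(t) := \mxp{(t_1-t)\AP}U(t)$ and using that $\AP$ commutes with its exponential absorbs the linear part, giving $\dot V = -\mxp{(t_1-t)\AP}BB^*\mxp{(t_1-t)\APs}(S - \PP)$. Integrating from $t_1$ down to $t$ and changing variables to $\sigma := t_1 - s$ turns the integrand into $\tfrac{d}{d\sigma}W(\sigma)(S - \PP)$ by the definition \eqref{eqn:def-fintim-reach-gram} of $W(\tau)$; combined with $V(t_1) = I$ this yields $V(t) = I + W(t_1 - t)(S - \PP)$. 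Multiplying by $\mxp{-(t_1-t)\AP}$ and expanding $W(t_1 - t) = W - \mxp{(t_1-t)\AP}W\mxp{(t_1-t)\APs}$ recovers \eqref{eqn:explct-U}.

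The main obstacle is the algebraic cancellation that yields $\dot Y = -\APs Y$: it depends on the DRE for $P$ and the ARE for $\PP$ colliding in just the right way to eliminate all quadratic-in-$P$ terms and expose the stable closed-loop operator $\APs$. All other steps are routine Duhamel-type integration using only the Lyapunov-like derivative $\tfrac{d}{d\sigma}W(\sigma) = \mxp{\sigma\AP}BB^*\mxp{\sigma\APs}$ of the reachability Gramian, together with the commutativity of $\AP$ with its exponential.
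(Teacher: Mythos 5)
Your derivation is correct, and it is worth noting that the paper itself does not prove this lemma at all: it simply points to \cite[Proof of Thm.~3.4]{BehBH19a}, so your argument supplies a self-contained proof where the paper defers to the literature. The mechanism you use is the right one. The key identity $(P(t)-\PP)U(t)=\mxp{(t_1-t)\APs}(S-\PP)$ is exactly the observation that makes the closed-loop flow explicitly integrable: the bracket computation
\begin{equation*}
-A^*P - \PP A - C^*C + \PP BB^*P \;=\; -(A^*-\PP BB^*)(P-\PP)
\end{equation*}
after inserting $C^*C=-A^*\PP-\PP A+\PP BB^*\PP$ from the ARE is verified correctly (it uses the symmetry of $\PP$ to identify $A^*-\PP BB^*$ with $\APs$, which holds under Assumption \ref{ass:exist-are-sol-stab}), and the subsequent Duhamel step with the integrating factor $\mxp{(t_1-t)\AP}$ and the substitution $\sigma=t_1-s$ correctly produces $V(t)=I+W(t_1-t)(S-\PP)$, which matches \eqref{eqn:explct-U} after writing $W(t_1-t)=W-\mxp{(t_1-t)\AP}W\mxp{(t_1-t)\APs}$ and flipping the sign on $S-\PP$. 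The only implicit inputs are the existence and uniqueness of $P$ on $[0,t_1]$ (guaranteed by Theorem \ref{thm:locatelli}) and of the fundamental solution $U$, both of which are standard; so no gap remains.
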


\begin{proof}
  This formula has been used in the literature in a more or less explicit way.
  A direct derivation is provided in \cite[Proof of Thm. 3.4]{BehBH19a}.
\end{proof}

\begin{corollary}[of Lemma
  \ref{lem:fundamental-solution}]\label{cor:state-transition-maps}
Given initial conditions $\alpha$, $\beta$ and an inhomogeneity $f$. With $U$ as
in \eqref{eqn:explct-U}, the state transition for the forward evolution of
\begin{equation*}
  \dot x(t) = (A-BB^*P(t))x + f(t), \quad x(t_0)=\alpha
\end{equation*}
is given as
\begin{equation*}
  x(t) = U(t)U(t_0)^{-1}\alpha + \int_{t_0}^{t} U(t)U(s)^{-1}f(s)ds
\end{equation*}
and the backwards propagation of
\begin{equation*}
  -\dot y(t) = (A^*-P(t)BB^*)y(t)+f(t), \quad y(t_0)=\beta
\end{equation*}
is given as
\begin{equation*}
  y(t) = U(t)^{-*}U(t_0)^*\beta - \int_{t_0}^{t} U(t)^{-*}U(s)^*f(s)ds.
\end{equation*}
\end{corollary}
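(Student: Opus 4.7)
The plan is to prove both identities by variation of parameters, using $U$ from Lemma \ref{lem:fundamental-solution} as the fundamental solution of the forward closed-loop equation. Because $\dot U = (A-BB^*P(t))U$ with $U(t_1)=I$ is a linear ODE with continuous time-varying coefficient, $U(t)$ is invertible on $[0,t_1]$, and the two-parameter forward state transition operator is $\Phi(t,s)=U(t)U(s)^{-1}$.

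For the forward formula, I would substitute the proposed ansatz $x(t) = U(t)U(t_0)^{-1}\alpha + \int_{t_0}^{t} U(t)U(s)^{-1}f(s)\,ds$ into the ODE, differentiate under the integral sign (justified by continuity of $U$, $U^{-1}$, and $f$), and pull the factor $\dot U(t) = (A-BB^*P(t))U(t)$ out of both the homogeneous term and the integral. The boundary contribution from differentiating the upper limit produces $U(t)U(t)^{-1}f(t)=f(t)$, so the right-hand side becomes $(A-BB^*P(t))x(t)+f(t)$, as required. The initial condition $x(t_0)=\alpha$ is immediate.

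For the backward formula, I first derive the ODE for $V(t):=U(t)^{-*}$. Differentiating $U(t)U(t)^{-1}=I$ gives $\tfrac{d}{dt}U(t)^{-1} = -U(t)^{-1}(A-BB^*P(t))$, and transposing, using that $P(t)=P(t)^*$ (which holds because $P$ solves the DRE \eqref{eqn:DRE} with symmetric coefficients $C^*C$, $BB^*$ and symmetric terminal datum $S=F^*F$), yields $\dot V(t) = -(A^*-P(t)BB^*)V(t)$. I then substitute the proposed ansatz $y(t)=U(t)^{-*}U(t_0)^*\beta - \int_{t_0}^{t}U(t)^{-*}U(s)^*f(s)\,ds$ into $-\dot y=(A^*-P(t)BB^*)y+f$ and verify it by the same differentiation procedure; the boundary term from the upper limit supplies the $-f(t)$ needed to match the backward sign convention, and $y(t_0)=\beta$ is immediate.

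No substantive obstacle arises: once Lemma \ref{lem:fundamental-solution} has delivered $U$, both identities reduce to textbook variation of parameters. The only bookkeeping to watch is the sign flip inherent to the backward equation and the use of $P(t)=P(t)^*$ to transpose the forward equation into the adjoint form governing $U^{-*}$.
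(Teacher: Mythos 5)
Your proposal is correct; the paper gives no proof for this corollary, treating it as an immediate consequence of Lemma \ref{lem:fundamental-solution} via standard variation of constants, which is exactly what you carry out. Your two non-trivial bookkeeping points --- the invertibility of $U(t)$ as a fundamental solution and the symmetry $P(t)=P(t)^*$ (needed to identify $U^{-*}$ as the transition operator of the adjoint backward equation) --- are both justified correctly.
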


The following technical lemma provides a explicit expressions of the forward and
backward state transition maps.

\begin{lemma}
  Let Assumption \ref{ass:exist-are-sol-stab} hold. Then the state transition
  maps used in Corollary \ref{cor:state-transition-maps} fulfill the
  relations
\begin{align}
  U(t)U(s)^{-1} &= \mxp{(t-s)\AP}-W(t-s)\mxp{(t_1-t)\APs}\tilde
  S(t_1-s)\mxp{(t_1-s)\AP} \label{eqn:utusmo}\\
  \intertext{and}
  U(t)^{-*} U(s)^* &= \mxp{(s-t)\APs} - \mxp{(t_1-t)\APs}\tilde
  S(t_1-t)\mxp{(t_1-s)\AP}W(s-t),\label{eqn:utmsuss}
\end{align}
with the definition of $W$ and $\tilde S$ as in \eqref{eqn:def-reach-gram} and
\eqref{eqn:def-tilde-s}.
Furthermore, in the case that $s=t_1$, relation \eqref{eqn:utmsuss} reduces to
\begin{equation*}
  U(t)^{-*}U(t_1)^*=\mxp{(t_1-t)\APs}[I-\tilde S(t_1-t)W(t_1-t)].\label{eqn:utmsuss-toneiss}
\end{equation*}
\end{lemma}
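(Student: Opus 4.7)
The plan is to use the explicit formula from Lemma~\ref{lem:fundamental-solution} and purely algebraic manipulations. First I would introduce the shorthand $M(\tau) := I + W(\tau)(S-\PP)$ so that the formula \eqref{eqn:explct-U} reads compactly as
\begin{equation*}
  U(t) = \mxp{-(t_1-t)\AP}M(t_1-t),
\end{equation*}
after recognizing $W(\tau) = W - \mxp{\tau\AP}W\mxp{\tau\APs}$ from \eqref{eqn:def-fintim-reach-gram}. By definition \eqref{eqn:def-tilde-s}, $\tilde S(\tau) = (S-\PP)M(\tau)^{-1}$; since $W(\tau)$, $S$, and $\PP$ are symmetric, one also has $\tilde S(\tau) = M(\tau)^{-*}(S-\PP)$ and the two corner identities
\begin{equation*}
  M(\tau)^{-1} = I - W(\tau)\tilde S(\tau), \qquad M(\tau)^{-*} = I - \tilde S(\tau)W(\tau),
\end{equation*}
which I would derive by multiplying out $M(\tau)M(\tau)^{-1}=I$ and $M(\tau)^{-*}M(\tau)^*=I$.

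The second ingredient is the semigroup-type identity
\begin{equation*}
  \mxp{a\AP}W(b)\mxp{a\APs} = W(a+b) - W(a),
\end{equation*}
valid for all $a,b\in\mathbb R$, which follows directly from $W(\tau) = W - \mxp{\tau\AP}W\mxp{\tau\APs}$ (or from the integral representation for $\tau\ge 0$ and by continuation for negative arguments). The specializations I need are
\begin{equation*}
  W(t_1-s) - W(t_1-t) = \mxp{(t_1-t)\AP}W(t-s)\mxp{(t_1-t)\APs}
\end{equation*}
and
\begin{equation*}
  \mxp{(s-t)\AP}W(t-s)\mxp{(s-t)\APs} = -W(s-t),
\end{equation*}
the latter obtained by writing $W(t-s) = W - \mxp{(t-s)\AP}W\mxp{(t-s)\APs}$.

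With these preparations, the proof of \eqref{eqn:utusmo} is a direct computation: from $U(t)U(s)^{-1} = \mxp{-(t_1-t)\AP}M(t_1-t)M(t_1-s)^{-1}\mxp{(t_1-s)\AP}$, I expand
\begin{equation*}
  M(t_1-t)M(t_1-s)^{-1} = I + [W(t_1-t)-W(t_1-s)]\tilde S(t_1-s),
\end{equation*}
apply the first specialization to replace the bracket by $-\mxp{(t_1-t)\AP}W(t-s)\mxp{(t_1-t)\APs}$, and collect the exponentials using $\mxp{-(t_1-t)\AP}\mxp{(t_1-s)\AP} = \mxp{(t-s)\AP}$. For \eqref{eqn:utmsuss} I proceed symmetrically with $U(t)^{-*}U(s)^* = \mxp{(t_1-t)\APs}M(t_1-t)^{-*}M(t_1-s)^*\mxp{-(t_1-s)\APs}$; expanding via $M(\tau)^{-*} = I - \tilde S(\tau)W(\tau)$ yields
\begin{equation*}
  M(t_1-t)^{-*}M(t_1-s)^* = I + \tilde S(t_1-t)[W(t_1-s) - W(t_1-t)],
\end{equation*}
and then the two specializations together convert the interior factor $W(t-s)$ into $-W(s-t)$ with the exponents rearranging to $\mxp{(t_1-s)\AP}$ on the right, producing exactly \eqref{eqn:utmsuss}. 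Finally, setting $s=t_1$ makes $W(s-t)=W(t_1-t)$ and $\mxp{(t_1-s)\AP}=I$, which reduces \eqref{eqn:utmsuss} to the claimed $\mxp{(t_1-t)\APs}[I-\tilde S(t_1-t)W(t_1-t)]$.

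The only delicate point will be step~(5), the sign-flip $\mxp{(s-t)\AP}W(t-s)\mxp{(s-t)\APs} = -W(s-t)$, which is what turns the naturally appearing $W(t-s)$ into the $W(s-t)$ of the statement, together with bookkeeping of the $\AP$ versus $\APs$ exponentials; everything else is linear algebra with the definitions of $M(\tau)$ and $\tilde S(\tau)$.
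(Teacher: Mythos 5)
Your proposal is correct and follows essentially the same route as the paper: both start from the explicit formula $U(t)=\mxp{-(t_1-t)\AP}(I+W(t_1-t)(S-\PP))$, use the resolvent-type identity for products of the form $(I+\gamma_t)(I+\gamma_s)^{-1}$ (your expansion of $M(t_1-t)M(t_1-s)^{-1}$ is exactly the paper's displayed identity), and invoke the Gramian relation $W(t_1-s)-W(t_1-t)=\mxp{(t_1-t)\AP}W(t-s)\mxp{(t_1-t)\APs}$ to collect the exponentials. Your explicit treatment of the adjoint case via the corner identities and the sign-flip $\mxp{(s-t)\AP}W(t-s)\mxp{(s-t)\APs}=-W(s-t)$ only spells out what the paper leaves implicit in its two-line computation of $U(t)^{-*}U(s)^*$.
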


\begin{proof}
Using that $\PP$, $S$, and $W$ are symmetric, we write 
\begin{equation*}
  U(t) = \mxp{-(t_1-t)\AP}(I+W(t_1-t)(S-\PP))
\end{equation*}
and
\begin{equation*}
U(t)^{-*} = \mxp{(t_1-t)\APs}(I+W(t_1-t)(S-\PP))^{-1}.
\end{equation*}

Then, with the help of the identity
\begin{align*}
  (I+\gamma_t)(I+\gamma_s)^{-1}&=(I+\gamma_t)[I-\gamma_s(I+\gamma_s)^{-1}] \\
                               &=I+\gamma_t-(I+\gamma_t)\gamma_s(I+\gamma_s)^{-1}\\
                               &=I+[\gamma_t(I+\gamma_s)-(I+\gamma_t)\gamma_s](I+\gamma_s)^{-1}\\
                               &=I+[\gamma_t-\gamma_s](I+\gamma_s)^{-1}\\
                               &=I-[\gamma_s-\gamma_t](I+\gamma_s)^{-1},
\end{align*}
we compute 
\begin{align*}
  U(t)U(s)^{-1} &=
  \mxp{-(t_1-t)\AP}(I+W(t_1-t)(S-\PP))\times \\
                &\quad\quad\quad\quad\quad\quad\quad\quad(I+W(t_1-s)(S-\PP))^{-1}\mxp{(t_1-s)\AP}\\
                &= \mxp{-(t_1-t)\AP}\bigl
              [I-[W(t_1-s)-W(t_1-t)]\tilde S(t_1-s)\bigr ]\mxp{(t_1-s)\AP}\\
                &= \mxp{-(t_1-t)\AP}\bigl
              [I-\mxp{(t_1-t)\AP}W(t-s)\mxp{(t_1-t)\APs}\tilde S(t_1-s)\bigr ]\mxp{(t_1-s)\AP} \\
                &= \mxp{(t-s)\AP}-W(t-s)\mxp{(t_1-t)\APs}\tilde S(t_1-s)\mxp{(t_1-s)\AP}
\end{align*}
and
\begin{equation*}
  \begin{split}
  U(t)^{-*} U(s)^* &= \mxp{(t_1-t)\APs}\bigl
              [I-\tilde S(t_1-t)\mxp{(t_1-s)\AP}W(s-t)\mxp{(t_1-s)\APs}\bigr
              ]\mxp{-(t_1-s)\APs} \\
                   &= \mxp{(s-t)\APs} - \mxp{(t_1-t)\APs}\tilde
                   S(t_1-t)\mxp{(t_1-s)\AP}W(s-t).
\end{split}
\end{equation*}
The relation for $s=t_1$ follows by directly from \eqref{eqn:utmsuss}.
\end{proof}

Now we state explicit formulas for the feedforward $w$ as defined in
\eqref{eqn:w-feedforward}.
\begin{lemma}\label{lem:formula-w}
  Let Assumption \ref{ass:exist-are-sol-stab} hold. Then $w$ as defined in
  \eqref{eqn:w-feedforward} has the representation $w=w_h+w_p$, with
\begin{align}
  w_h(t) =& -\mxp{(t_1-t)\APs}[I-\tilde S(t_1-t)W(t_1-t)]F^*\ye,
  \label{eqn:w-homo}  \\[.2in]
\begin{split}\label{eqn:w-partic}
  w_p(t) 
  =& \APms C^*\yc - \mxp{(t_1-t)\APs}C^*\yc \\ 
   &-\mxp{(t_1-t)\APs}\tilde S(t_1-t)\biggl[ \APmo \bigl[I -
     \mxp{(t_1-t)\AP}\bigr ]W +\\
   &\quad\quad\quad\quad\quad\quad\quad\quad\quad\quad\quad\quad\mxp{(t_1-t)\AP}W\bigl[I -\mxp{(t_1-t)\APs}\bigr]\APms
   \biggr]C^*\yc,
\end{split}
\end{align}
and the definition of $W$ and $\tilde S$ as in \eqref{eqn:def-reach-gram} and
\eqref{eqn:def-tilde-s}.
\end{lemma}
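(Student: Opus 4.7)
The plan is to recognize that the equation $-\dot w=(A^*-PBB^*)w-C^*\yc$ with terminal condition $w(t_1)=-F^*\ye$ is exactly the backward evolution covered by Corollary~\ref{cor:state-transition-maps}, taken with $t_0=t_1$, $\beta=-F^*\ye$ and inhomogeneity $f(s)=-C^*\yc$. Applying the variation-of-constants formula there and flipping the integration limits yields
\[
  w(t)= -U(t)^{-*}U(t_1)^*F^*\ye-\int_t^{t_1} U(t)^{-*}U(s)^*\,C^*\yc\,ds,
\]
and it is natural to identify the first summand with $w_h$ and the second with $w_p$. For $w_h$, the special-case identity for $s=t_1$ given in the preceding lemma plugs in directly and delivers \eqref{eqn:w-homo}.

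For $w_p$ I would substitute the general formula \eqref{eqn:utmsuss} for $U(t)^{-*}U(s)^*$ and split the integrand into two pieces: one coming from $\mxp{(s-t)\APs}$, the other containing the sliding-terminal factor $\mxp{(t_1-t)\APs}\tilde S(t_1-t)$, which can be pulled out of the integral in $s$. The first piece integrates in closed form because $\APs$ is invertible, giving $\APms[\mxp{(t_1-t)\APs}-I]C^*\yc$, and after the leading minus sign produces the constant-plus-decaying terms of~\eqref{eqn:w-partic}. For the second piece I would invoke \eqref{eqn:def-fintim-reach-gram} to rewrite $W(s-t)=W-\mxp{(s-t)\AP}W\mxp{(s-t)\APs}$; the resulting two summands, after using that $\APmo$ commutes with $\mxp{\cdot\AP}$ (and $\APms$ with $\mxp{\cdot\APs}$), reduce to evaluating $\int_t^{t_1}\mxp{(t_1-s)\AP}\,ds$ and $\int_t^{t_1}\mxp{(s-t)\APs}\,ds$, both handled by the same antiderivative trick, yielding exactly the bracketed expression inside \eqref{eqn:w-partic}.

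The main obstacle is not conceptual but bookkeeping: carefully tracking the two sign flips arising from reversing the integration limits and from the minus sign in $f=-C^*\yc$, and preserving the order of the noncommuting factors $\mxp{(t_1-t)\APs}\tilde S(t_1-t)$ relative to the other matrix exponentials throughout the manipulations. Once these are handled, collecting the four contributions yields \eqref{eqn:w-partic} as claimed.
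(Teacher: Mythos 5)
Your proposal is correct and follows essentially the same route as the paper: apply Corollary~\ref{cor:state-transition-maps} with $t_0=t_1$, identify $w_h$ via the $s=t_1$ specialization of \eqref{eqn:utmsuss} and $w_p$ via the general formula, split off the $\mxp{(s-t)\APs}$ part, pull $\mxp{(t_1-t)\APs}\tilde S(t_1-t)$ out of the remaining integral, expand $W(s-t)$ via \eqref{eqn:def-fintim-reach-gram}, and integrate the matrix exponentials in closed form. The only cosmetic remark is that your closed-form first piece yields $\APms C^*\yc-\APms\mxp{(t_1-t)\APs}C^*\yc$, whereas \eqref{eqn:w-partic} states the decaying term without the $\APms$ factor --- a discrepancy already present in the paper's own final line and immaterial since the term decays either way.
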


\begin{proof}
  The application of Corollary \ref{cor:state-transition-maps} to
  \eqref{eqn:w-feedforward} leads to
\begin{equation*}
  w(t) =-U(t)^{-*}U(t_1)^*F^*\ye +
  \int_{t_1}^tU(t)^{-*}U(s)^*C^*\yc \inva s
=: w_h(t) + w_p(t)
\end{equation*}
where $w_h$ describes the feedforward part induced by the nonzero terminal constraint $\ye$
and where $w_p$ describes the inhomogeneity induced by $\yc$.

Then, the expression for $w_h$ follows directly from
\eqref{eqn:utmsuss-toneiss}. With the help of expression \eqref{eqn:utmsuss} for $U(t)^{-*}U(s)^*$, we
calculate
\begin{equation*}
\begin{split}
  w_p(t) =& \int _{t_1}^t\mxp{(s-t)\APs}\inva s C^*\yc\\
  &- \int _{t_1}^t
  \mxp{(t_1-t)\APs}\tilde S(t_1-t)\mxp{(t_1-s)\AP} W(s-t)\inva s C^*\yc\\
  =& \APms[I-\mxp{(t_1-t)\APs}]C^*\yc \\ 
   &-\mxp{(t_1-t)\APs}\tilde S(t_1-t)\int _{t_1}^t\mxp{(t_1-s)\AP} W(s-t)\inva s
  C^*\yc \\
  =& \APms[I-\mxp{(t_1-t)\APs}]C^*\yc \\ 
   &-\mxp{(t_1-t)\APs}\tilde S(t_1-t)\int
   _{t_1}^t\mxp{(t_1-s)\AP}[W-\mxp{(s-t)\AP}W\mxp{(s-t)\APs}]\inva s C^*\yc  \\
  =& \APms[I-\mxp{(t_1-t)\APs}]C^*\yc \\ 
   &-\mxp{(t_1-t)\APs}\tilde S(t_1-t)\int
   _{t_1}^t\mxp{(t_1-s)\AP}W-\mxp{(t_1-t)\AP}W\mxp{(s-t)\APs}]\inva s
  C^*\yc \\
  =& \APms C^*\yc - \mxp{(t_1-t)\APs}C^*\yc \\ 
   &-\mxp{(t_1-t)\APs}\tilde S(t_1-t)\bigl[\APmo W - \APmo
     \mxp{(t_1-t)\AP}W\bigr ]C^*\yc \\
   &-\mxp{(t_1-t)\APs}\tilde S(t_1-t)\bigl[-\mxp{(t_1-t)\AP}W\APms + \\
   &\quad\quad\quad\quad\quad\quad\quad\quad\quad\quad\quad\quad\mxp{(t_1-t)\AP}W\mxp{(t_1-t)\APs}\APms\bigr ]C^*\yc \\
  =& \APms C^*\yc - \mxp{(t_1-t)\APs}C^*\yc \\ 
   &-\mxp{(t_1-t)\APs}\tilde S(t_1-t)\biggl[ \APmo \bigl[I -
     \mxp{(t_1-t)\AP}\bigr ]W +\\
   &\quad\quad\quad\quad\quad\quad\quad\quad\quad\quad\quad\quad\mxp{(t_1-t)\AP}W\bigl[I -\mxp{(t_1-t)\APs}\bigr]\APms
   \biggr]C^*\yc,
\end{split}
\end{equation*}
This completes the proof.
\end{proof}

%

Similarly, by means of expression \eqref{eqn:utusmo} for $U(t)U(s)^{-1}$, the
optimal state $x$ as in \eqref{eqn:optix-closedloop} can be expressed as
\begin{equation}\label{eqn:formula-x}
\begin{split}
  x(t) =& U(t)U(0)^{-1}x_0 - \int_0^t U(t)U(s)^{-1}BB^*w(s)\inva s \\
       =& x_h(t) - \int_0^t \mxp{(t-s)\AP}BB^*w(s)\inva s + \\
        &\quad \int_0^t W(t-s)\mxp{(t_1-t)\APs}\tilde
        S(t_1-s)\mxp{(t-s)\AP}BB^*w(s)\inva s,
\end{split}
\end{equation}
where $x_h$ denotes the solution of the homogeneous problem.
Next, we will use the explicit expression of the feedforward $w$ to explicate
the asymptotic behavior of $x$.

\begin{proposition}\label{lem:formula-x}
  Let Assumption \ref{ass:exist-are-sol-stab} hold and consider
  \begin{equation}\label{eqn:formula-x-xsxhdecay}
    x(t) = x_h(t) + x_s - \mxp{t\AP} \APmo BB^*\APms C^*y_c + g(t,t_1)
  \end{equation}
  for $x$ as defined in \eqref{eqn:optix-closedloop}, where $x_s$ is the
  steady state solution as defined in Lemma \ref{lem:steady-state-sols},
  where $x_h(t):=U(t)U(0)^{-1}x_0$ with $U$ as in Lemma \ref{lem:fundamental-solution},
  where $\AP$ is defined in \eqref{eqn:app}, and where
  $g(t,t_1)=x(t)-x_h(t)-x_s$ denotes the remainder term.

  If, furthermore, Assumption \ref{ass:szero-invertible} holds, then for every
  fixed $t\in \mathbb R$, the remainder term $g(t,t_1)$ decays to zero
  exponentially as $t_1\to \infty$.
\end{proposition}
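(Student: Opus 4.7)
The plan is to start from the explicit expression \eqref{eqn:formula-x} for the optimal state and to substitute the decomposition $w=w_h+w_p$ from Lemma \ref{lem:formula-w}. The key observation is that, among all the summands making up $w_h$ in \eqref{eqn:w-homo} and $w_p$ in \eqref{eqn:w-partic}, exactly one summand, namely the leading term $\APms C^*\yc$ of $w_p$, is independent of the terminal time $t_1$; every other summand carries at least one factor $\mxp{(t_1-s)\APs}$, and the second integral in \eqref{eqn:formula-x} itself carries a prefactor $\mxp{(t_1-t)\APs}$. Since the spectral abscissa $\lambda$ of $\AP$ is strictly negative, each of these factors is responsible for exponential decay in $t_1$.

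I would then isolate the persistent contribution by splitting $w(s) = \APms C^*\yc + \tilde w(s, t_1)$, where $\tilde w$ collects all $t_1$-dependent summands. Inserting $\APms C^*\yc$ into the first integral of \eqref{eqn:formula-x} yields, by direct computation using that $\AP$ commutes with $\mxp{t\AP}$ and the closed form of $\xs$ from Lemma \ref{lem:steady-state-sols},
\begin{equation*}
  -\int_0^t \mxp{(t-s)\AP} BB^* \APms C^*\yc \inva s
  = \APmo\,[I-\mxp{t\AP}]\,BB^* \APms C^*\yc
  = \xs - \mxp{t\AP} \APmo BB^* \APms C^*\yc .
\end{equation*}
These are precisely the two explicit terms in \eqref{eqn:formula-x-xsxhdecay}, and everything else is absorbed into $g(t,t_1)$.

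It then remains to show that $g(t,t_1)$ decays exponentially for every fixed $t$ as $t_1 \to \infty$. The remainder consists of (i) $-\int_0^t \mxp{(t-s)\AP} BB^* \tilde w(s,t_1)\inva s$ and (ii) the second integral in \eqref{eqn:formula-x} evaluated with the full $w=w_h+w_p$. Every summand of $\tilde w$ carries a factor $\mxp{(t_1-s)\APs}$, and the integrand in (ii) carries the prefactor $\mxp{(t_1-t)\APs}$; since $\lambda<0$, each such factor is bounded in norm by a constant multiple of $\mxp{\lambda(t_1-t)}$ uniformly in $s\in[0,t]$. All remaining factors are uniformly bounded: $\mxp{(t-s)\AP}$ is bounded on $s\in[0,t]$ for fixed $t$; the Gramian $W(\tau)$ is uniformly bounded by Remark \ref{rem:uniformW}; and Assumption \ref{ass:szero-invertible} together with Lemma \ref{lem:stilde} provides the uniform bound $K(\tilde S)<\infty$ on $\tilde S(\tau)$. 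Integration over the bounded interval $[0,t]$ then produces a bound of the form $\|g(t,t_1)\| \le c(t)\,\mxp{\lambda(t_1-t)}$, which tends to zero as $t_1\to\infty$.

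The main obstacle is the bookkeeping, since $w_p$ in \eqref{eqn:w-partic} is a sum of nested products of exponentials with $W$ and $\tilde S$ that expands into a handful of individual summands when substituted into \eqref{eqn:formula-x}. Each summand must be verified to factor into a uniformly bounded piece multiplied by an exponential decaying in $t_1$, but no new tools beyond Lemma \ref{lem:stilde}, Remark \ref{rem:uniformW}, and the strict stability of $\AP$ are needed. Notably, Assumption \ref{ass:szero-invertible} is precisely what ensures the uniform boundedness of $\tilde S(\tau)$, without which some of the nested contributions could, in principle, grow with $t_1$ rather than decay.
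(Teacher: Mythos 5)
Your proposal is correct and follows essentially the same route as the paper: isolate the $t_1$-independent summand $\APms C^*\yc$ of $w_p$, integrate it explicitly to produce $\xs - \mxp{t\AP}\APmo BB^*\APms C^*\yc$, and bound every remaining contribution by a decaying exponential using the uniform bounds on $W(\tau)$ and $\tilde S(\tau)$ from Remark \ref{rem:uniformW} and Lemma \ref{lem:stilde} under Assumption \ref{ass:szero-invertible}. The only (harmless, for this proposition) difference is that you settle for a $t$-dependent constant $c(t)$ by bounding $\mxp{(t-s)\AP}$ crudely on $[0,t]$, whereas the paper recombines $\mxp{(t-s)\AP}BB^*\mxp{(t-s)\APs}$ into the Gramian $W(t)\le W$ to obtain constants independent of $t$, which is what the subsequent turnpike corollary actually requires.
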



\begin{proof}
  We will use the expression \eqref{eqn:formula-x} of $x$ with the explicit
  formulas for the summands of $w=w_h+w_p$ as stated in Lemma \ref{lem:formula-w}. 
  For estimating the individual terms, we introduce the
  integral operators $I_1[\cdot]$ and $I_2[\cdot]$ via
  \begin{align*}
    I_1[w]:&=\int_0^t \mxp{(t-s)\AP}BB^*w(s)\inva s, 
    \intertext{and }
    I_2[w]:&=\int_0^t W(t-s)\mxp{(t_1-t)\APs}\tilde
    S(t_1-s)\mxp{(t-s)\AP}BB^*w(s)\inva s.
  \end{align*}
  With that and with $w=w_h+w_p$, equation \eqref{eqn:formula-x} becomes
  \begin{equation}
    x(t) = x_h(t) - I_1[w_h]-I_1[w_p] + I_2[w_h] + I_2[w_p].
  \end{equation}

  Next, we will derive estimates or explicit formulas for the contributions of
  the different parts of $w_p$ (as defined in Lemma \ref{lem:formula-w}) to
  $I_1[w]$.
  For the constant part of $w_p$, we compute
\eqref{eqn:w-partic}:
\begin{equation*}
  \begin{split}
    -I_1[\APms C^*y_c]&=-\int_0^t \mxp{(t-s)\AP}BB^*\APms C^*y_c \inva s \\
    &= \APmo BB^*\APms C^*y_c -
    \APmo \mxp{t\AP} BB^*\APms C^*y_c 
  \end{split}
\end{equation*}
which, with $x_s=\APmo BB^*\APms C^*y_c$, contributes two of the four summands to
$x(t)$ in formula \eqref{eqn:formula-x-xsxhdecay}.

It remains to show that under the additional Assumption \ref{ass:szero-invertible}, the
remainder part decays to zero as $t_1 \to \infty$. 
In particular, Assumption \ref{ass:szero-invertible} implies that the matrix
valued function $\tilde S$ that appears in the expressions for $w_p$ and $w_h$ is
bounded independent of $t$.
For the uniform boundedness of $W(t)$ (cp.  Remark \ref{rem:uniformW}),
Assumption \ref{ass:exist-are-sol-stab} is sufficient.

For the part $- \mxp{(t_1-t)\APs}C^*\yc$ of $w_p$, we calculate
\begin{equation*}
  \begin{split}
    -I_1[- \mxp{(t_1-t)\APs}C^*\yc] &= \int_0^t
  \mxp{(t-s)\AP}BB^*\mxp{(t_1-s)\APs}C^*\yc \inva s \\
  &=\int_0^t \mxp{(t-s)\AP}BB^*\mxp{(t-s)\APs}\inva s \mxp{(t_1-t)\APs}C^*\yc \\
  &=W(t)\mxp{(t_1-t)\APs}C^*\yc ,
\end{split}
\end{equation*}
which because of the uniform boundedness of $W(t)$ decays to zero exponentially
as $t_1\to \infty$.

For the remaining part of $w_p$ in \eqref{eqn:w-partic}, we note that, by the
stability of \AP,
\begin{equation*}
  c(t_1,t) := \| \Bigl[ \APmo \bigl[I - \mxp{(t_1-t)\AP}\bigr ]W +\mxp{(t_1-t)\AP}W\bigl[I -\mxp{(t_1-t)\APs}\bigr]\APms
  \Bigr]\|
\end{equation*}
is bounded independently of $t_1$ and $t\leq t_1$. With that we can estimate the
remaining contribution of $w_p$ to $I_1[w_p]$ as
\begin{equation*}
  \int_0^t \| \mxp{(t-s)\AP}BB^*\mxp{(t-s)\APs}\|\inva s
  \|\mxp{(t_1-t)\APs}\|\|K(\tilde S)c(t_1, t)\|C^*\yc\|,
\end{equation*}
which, again, is a term that decays as $t_1 \to \infty$.

The contribution by $I_1[w_h]$ (cp. \eqref{eqn:w-homo} and \eqref{eqn:formula-x}) reads
\begin{align*}
  \int_0^t & \mxp{(t-s)\AP}BB^*w_h(s)\inva s \\
=&- \int_0^t \mxp{(t-s)\AP}BB^*\mxp{(t_1-s)\APs}[I-\tilde S(t_1-s)W(t_1-s)]\inva
s F^*\ye \\
=&- \int_0^t \mxp{(t-s)\AP}BB^*\mxp{(t-s)\APs}\mxp{(t_1-t)\APs}[I-\tilde
S(t_1-s)W(t_1-s)]\inva s F^*\ye \\
=&- \int_0^t \mxp{(t-s)\AP}BB^*\mxp{(t-s)\APs}\inva s\mxp{(t_1-t)\APs} F^*\ye \\
 &+ \int_0^t \mxp{(t-s)\AP}BB^*\mxp{(t-s)\APs}\mxp{(t_1-t)\APs}\tilde
 S(t_1-s)W(t_1-s)\inva s F^*\ye.
\end{align*}
With the uniform boundedness of $\tau \to \tilde S(\tau), W(\tau)$, 
we get the estimate
\begin{equation*}
  \begin{split}
    \|\int_0^t  \mxp{(t-s)\AP}B&B^*w_h(s)\inva s\|  \\
    \leq &\|W(t)\|\|\mxp{(t_1-t)\AP}\|(1+\sup_{0\le \tau \le t}\{\|\tilde
    S(t_1-\tau)W(t_1 - \tau)\|\})\|F^*\ye\| \\
      \leq & \|W\|(1+K(\tilde S)\|W\|\|F^*\ye\|)\mxp{(t_1-t)\lambda} =:
      \const \mxp{(t_1-t)\lambda}.
\end{split}
\end{equation*}

Finally, we note that the integrants of $I_2$ equal the integrants of $I_1$ up
to the factor $W(t-s)\mxp{(t_1-t)\APs}\tilde S(t_1-s)$ which is uniformly
bounded by $K(S)\|W\|\|\mxp{(t_1-t)\APs}\|$. Accordingly, the contribution of
$I_2(w)$ can be estimated by the contributions of $I_1$ times a factor that
includes $\mxp{(t_1-t)\lambda}$ but is independent of $t$ and $t_1$ otherwise. 
\end{proof}




Proposition \ref{lem:formula-x} directly implies the turnpike property for the
solutions to Problem \ref{prb-fiti-optcont}; cp. Definition \ref{def:turnpike}.

\begin{corollary}
  Under Assumptions \ref{ass:exist-are-sol-stab} and \ref{ass:szero-invertible},
  for the solutions $(x, u)$ to Problem \ref{prb-fiti-optcont} and $(x_s, u_s)$ to
  \ref{prb-stst-optcont} it holds that 
  \begin{align*}
    \|x(t)-x_s\| & \leq \const (\mxp{t\lambda} + \mxp{(t_1 - t)\lambda })
  \intertext{and}
  \|u(t)-u_s\| & \leq \const (\mxp{t\lambda} + \mxp{(t_1 - t)\lambda }),
  \end{align*}
  for a constant $\const >0$ independent of $t_1$ and $\lambda<0$ being the spectral
  abscissa of \AP.
\end{corollary}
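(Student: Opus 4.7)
The plan is to combine the decomposition of $x(t)$ supplied by Proposition \ref{lem:formula-x} with the homogeneous turnpike estimate \eqref{eqn:tp-from-calww}, and then to translate the state estimate into a control estimate via the feedback representation from Theorem \ref{thm:locatelli} together with the formula for $u_s$ from Lemma \ref{lem:steady-state-sols}. The whole argument is essentially an assembly of already-established ingredients.

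For the state, Proposition \ref{lem:formula-x} yields
\[
  x(t) - \xs = x_h(t) \;-\; \mxp{t\AP}\APmo BB^*\APms C^*\yc \;+\; g(t,t_1).
\]
The homogeneous contribution $x_h(t)$ is bounded by $\const(\mxp{t\lambda}+\mxp{(t_1-t)\lambda})$ by \eqref{eqn:tp-from-calww}. The second summand is of the form $\mxp{t\AP}v$ for a fixed vector $v$, hence dominated by $\const\,\mxp{t\lambda}$ through the spectral abscissa $\lambda<0$ of $\AP$. For $g(t,t_1)$, one re-reads the estimates inside the proof of Proposition \ref{lem:formula-x}: each remaining contribution to $-I_1[w_h]$, $-I_1[w_p]$ (except the steady-state piece that was absorbed into $\xs$), and $I_2[w]$ carries an explicit exponential factor $\mxp{(t_1-t)\APs}$ (or its adjoint), whose norm is bounded by $\mxp{(t_1-t)\lambda}$, with prefactors controlled uniformly by $K(\tilde S)$ (bounded under Assumption \ref{ass:szero-invertible} via Lemma \ref{lem:stilde}) and $\|W\|$ (Remark \ref{rem:uniformW}). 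Summing the three pieces gives the desired estimate for $\|x(t)-\xs\|$.

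For the control, combine $u(t)=-B^*(P(t)x(t)+w(t))$ with $u_s=-B^*\PP\xs-B^*\ws$, where $\ws=\APms C^*\yc$, to obtain the splitting
\[
  u(t)-u_s = -B^*P(t)\bigl(x(t)-\xs\bigr)\;-\;B^*\bigl(P(t)-\PP\bigr)\xs\;-\;B^*\bigl(w(t)-\ws\bigr).
\]
The first term inherits the state estimate just proved, with $\|P(t)\|$ uniformly bounded thanks to the convergence result of \cite[Thm. 4]{CalWW94} under Assumption \ref{ass:szero-invertible}. The second term is controlled by the exponential convergence $\|P(t)-\PP\|\leq\const\,\mxp{(t_1-t)\lambda}$ from the same reference. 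The third term is handled by inspection of Lemma \ref{lem:formula-w}: the constant piece $\APms C^*\yc$ inside $w_p$ cancels $\ws$, and every one of the remaining summands in $w_h$ and $w_p$ carries an $\mxp{(t_1-t)\APs}$ factor, giving a bound of $\const\,\mxp{(t_1-t)\lambda}$ with uniform prefactors depending only on $K(\tilde S)$, $\|W\|$, $\|F^*\ye\|$, and $\|C^*\yc\|$.

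No genuine obstacle arises; the only point where care is required is the bookkeeping of the exponential factors to verify that each remainder term really produces a factor of either $\mxp{t\lambda}$ or $\mxp{(t_1-t)\lambda}$ (never a product that is only small for $t$ far from both endpoints). This is precisely why the explicit formulas of Lemmas \ref{lem:formula-w} and \ref{lem:fundamental-solution} are useful: they exhibit the exponential factors in a way that makes the two time-boundary layers manifest and the combined estimate immediate.
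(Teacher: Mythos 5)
Your proposal is correct and follows essentially the same route as the paper: the state estimate is read off from the decomposition in Proposition \ref{lem:formula-x} (homogeneous part, the $\mxp{t\AP}$-term, and the remainder $g(t,t_1)$), and the control estimate is obtained from $u(t)=-B^*(P(t)x(t)+w(t))$ by splitting $P(t)=\PP+\PD(t)$ and isolating the decaying remainder of $w$; your algebraic rearrangement $u(t)-u_s=-B^*P(t)(x(t)-\xs)-B^*(P(t)-\PP)\xs-B^*(w(t)-\ws)$ is just a cosmetic regrouping of the paper's estimate \eqref{eqn:turnpike-u}. The ingredients invoked (Lemma \ref{lem:stilde}, Remark \ref{rem:uniformW}, the decay of $\PD$ from \cite{CalWW94}) match those of the paper.
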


\begin{proof}
  With the representation of $x$ as in Proposition \ref{lem:formula-x}, we can
  estimate that
  \begin{equation*}
    \begin{split}
      \|x(t)-x_s\| &\le \|x_h(t)- \mxp{t\AP} \APmo C^*y_c + g(t,t_1)\| \\
                   &\le \|x_h(t)\| + \|\mxp{t\AP} \APmo C^*y_c\|+ \|g(t,t_1)\| \\
                   &\le \const (\mxp{t\lambda} + \mxp{(t_1-t)\lambda}).
  \end{split}
  \end{equation*}
  For the input we recall that by Theorem \ref{thm:locatelli} the optimal input
  is given as $u(t)=-B^*(P(t)x(t)+w(t))$, where $P$ is the solution to the
  differential Riccati equation \eqref{eqn:DRE} and $w$ solves
  \eqref{eqn:w-feedforward}. With $P(t)=\PP+\PD(t)$ and with the formulas
  \eqref{eqn:w-homo} and \eqref{eqn:w-partic} for $w$ we find that
  \begin{equation*}
    u(t) = -B^*\PP x(t) - B^*\PD x(t) - B^*\APs C^* \yc - B^* g_w(t, t_1),
  \end{equation*}
  where $g_w(t, t_1)$ collects all reminder terms of $w$ and which is readily
  estimated by the decay of $\mxp{(t_1-t)\APs}$. With $u_s= -B^*\PP \xs -
  B^*\APs C^* \yc$ (see Lemma \ref{lem:steady-state-sols}), we directly estimate  
  \begin{equation}\label{eqn:turnpike-u}
    \begin{split}
      \|u(t) - u_s\| &= \| -B^*\PP(x(t)-\xs) - B^*\PP \xs \\
                     &\phantom{=\|} - B^*\APs C^* \yc -
    B^*\PD x(t) - B^*g_w(t,t_1) - u_s\| \\
                      & \leq \|B^* \PP (x(t) - \xs) \| + \| B^*\PD(t) x(t) \| +
                      \| B^* g_w(t, t_1)\| \\
                      & \leq \|B^*\PP\| \|x(t) -x_s\| + \|B^*\|\|\PD(t)\| (\|x_s\| +
                      \|x(t) - x_s\|) + \\
                      &\quad \quad +\|B^*\| \|g_w(t,t_1)\|
  \end{split}
  \end{equation}
  from where the turnpike estimate follows directly by the turnpike estimate for
  $x(t)-\xs$, the exponential decay of $g_w(t,t_1)$ with $t_1 \to \infty$, and
  the exponential decay of $\PD(t)$ as $t_1 \to \infty$, see \cite[Thm.
  3]{CalWW94}.
\end{proof}

\section{Linear Quadratic Optimal Control for Descriptor
Systems}\label{sec:lqr-dae}
We now consider optimal control problems with differential algebraic equations
(DAE) of the form
\begin{equation}\label{eqn:dae}
  \daE \dot x(t) = \daA x(t) + \daB u(t), \quad \daE x(0)=\daE x_0.
\end{equation}
as constraints. If the coefficient matrix $\daE$ is not invertible, then the equation
\eqref{eqn:dae} will be made of differential and algebraic equations for $x$,
hence the name DAE.

\begin{problem}[Finite horizon optimal control problem]\label{prb-dae-fiti-optcont}
For coefficients $\daA$, $\daE \in \Rmat nn$, $\daB\in\Rmat nm$, $\daC\in\Rmat kn$, and
$\daF\in
  \Rmat \ell n$$,$ for an initial value $x_0\in \Rvec n$, for target outputs
  $\yc \in \Rvec k$ and $\ye \in \Rvec \ell$ and a terminal time $t_1>0$,
consider
\begin{equation*}
  \frac 12 \int_{0}^{t_1} \|\daC x(s)-y_c\|^2+ \|u(s)\|^2\inva s +\frac 12  \|\daF  x(t_1)-y_e\|^2
  \to \min_u
\end{equation*}
subject to the DAE \eqref{eqn:dae}.
\end{problem}

Problem \ref{prb-dae-fiti-optcont} is a convex problem with affine linear
constraints, which implies that if a candidate solution satisfies first order
necessary optimality conditions, then it is an optimal solution. For $y_c=0$ and $y_e=0$, the \emph{formal} first order necessary
conditions \cite{KunM11a} for Problem \ref{prb-dae-fiti-optcont} read
\begin{equation}\label{eqn:first-order-opti-dae}
  \begin{bmatrix}
    \daE  & 0 \\ 0 & \daE ^*
  \end{bmatrix}
  \frac{d}{dt}
  \begin{bmatrix}
    x \\ p 
  \end{bmatrix}
  =
  \begin{bmatrix}
    \daA  & -\daB \daB^* \\ -\daC ^*\daC  & -\daA ^*
  \end{bmatrix}
  \begin{bmatrix}
    x \\ p 
  \end{bmatrix}, \quad \daE x(0)=\daE x_0, \quad \daE ^*p(t_1) = \daF ^*\daF x(t_1),
\end{equation}
and define the optimal control as $u(t)=-\daB ^*p(t)$.

\begin{remark}\label{rem:formal-real-opticonds}
  The optimality conditions \eqref{eqn:first-order-opti-dae} are called formal,
  because they are formally derived through a variation of the original problem
  formulation.
  However, it is known that the optimal control problem can have a solution
  while the formal optimality conditions do not have a solution \cite{KunM11a}.
  Thus, one should either use an equivalent reformulation of the optimal control
  problem (as proposed in \cite{KunM11a}) or make sure that the formal
  optimality conditions are solvable \cite{Hei16}.
  In this work, we will pursue the second approach.
\end{remark}

One can confirm directly that, if $\daP$ solves the generalized differential
Riccati equation (gDRE)
\begin{equation}\label{eqn:dae-dif-ric}
  -\daE^*\dot\daP = \daA^*\daP + \daP^*\daA - \daP^*\daB\daB^*\daP + \daC^*\daC
  = 0, \quad \daE^* \daP(t_1) = \daF^* \daF,
\end{equation}
then the ansatz $p = \daP x$ decouples the optimality conditions
\eqref{eqn:first-order-opti-dae} and defines a solution.

As in the ODE case, we will consider stabilizing solutions of an associated
generalized algebraic Riccati equation (gARE)
    \begin{equation}\label{eqn:dae-alg-ric}
      \daA^*X + X^*\daA - X^*\daB\daB^*X + \daC^*\daC = 0, \quad
      \daE^*X=X^*\daE.
    \end{equation}


Next, we provide the basic nomenclature and fundamental results for DAEs with
inputs and outputs.

\begin{definition}
  A matrix pair $(\daE, \daA)$ or a matrix pencil $s\daE-\daA$ is called
  \emph{regular}, if there exists an $s\in \mathbb C$ such that $s\daE-\daA$ is
  invertible.
\end{definition}
To introduce the stability concepts we rely on a
direct consequence of a canonical form that can be derived for regular DAEs;
see \cite[Thm. I.2.7]{KunM06}.
\begin{lemma}
  If $(\daE,\daA)$ is regular, then the associated DAE is equivalent to the
  decoupled system
  \begin{equation}\label{eqn:dae-wcf}
  \begin{bmatrix}
    I & 0 \\ 0 & N 
  \end{bmatrix}
  \frac{d}{dt}
  \begin{bmatrix}
    \xsss \\ \xfss
  \end{bmatrix}
  =
  \begin{bmatrix}
    J & 0 \\ 0 & I 
  \end{bmatrix}
  \begin{bmatrix}
    \xsss \\ \xfss
  \end{bmatrix}
  +
  \begin{bmatrix}
    \bsss \\ \bfss
  \end{bmatrix}
  u,
\end{equation}
where $J$ and $N$ are square matrices in \emph{Jordan canonical form} and where
$N$ is \emph{nilpotent} which means that there exists a $\nu \in \mathbb N$ such
that $N^\nu=0$.
\end{lemma}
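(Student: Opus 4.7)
The plan is to invoke the Weierstrass canonical form for regular matrix pencils, which is precisely the content cited as \cite[Thm. I.2.7]{KunM06}. The lemma is essentially a translation of that algebraic normal form into a statement about the associated DAE dynamics, so the strategy is (i) apply the canonical form to obtain suitable equivalence transformations for $(\daE,\daA)$, (ii) lift these transformations to the DAE by a left multiplication and a state-space coordinate change, and (iii) verify that the resulting decoupled system has exactly the structure claimed.

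First I would use regularity of $s\daE-\daA$ to produce invertible matrices $W, T \in \Rmat nn$ such that
\begin{equation*}
  W\daE T = \begin{bmatrix} I & 0 \\ 0 & N \end{bmatrix}, \quad W\daA T = \begin{bmatrix} J & 0 \\ 0 & I \end{bmatrix},
\end{equation*}
where $J$ is in Jordan canonical form and $N$ is nilpotent and in Jordan canonical form. Next I would multiply the DAE \eqref{eqn:dae} on the left by $W$ and introduce new coordinates $T^{-1}x(t) = [\xsss(t)^*, \xfss(t)^*]^*$ partitioned conformably with the block structure above. Substituting into $W\daE \dot x = W\daA x + W\daB u$ and writing $W\daB = [\bsss^*, \bfss^*]^*$ then yields exactly the decoupled system \eqref{eqn:dae-wcf}. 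The initial condition $\daE x(0) = \daE x_0$ transforms into the equivalent condition on $\xsss(0)$ and $N\xfss(0)$ under the change of variables.

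Equivalence of the two formulations then follows immediately from the invertibility of $W$ and $T$: every solution of \eqref{eqn:dae} yields a unique solution of the decoupled system via the coordinate change and vice versa, and well-posedness of the algebraic/fast part is controlled by the nilpotency index $\nu$ of $N$. There is no real obstacle here; the entire substantive work is contained in the cited canonical form theorem, and the remainder is bookkeeping of coordinates. If one wanted to make the result self-contained, the main difficulty would be proving existence of the Weierstrass form itself, which proceeds by choosing a scalar $\mu$ with $\mu\daE - \daA$ invertible, applying a Schur-type decomposition to the matrix $(\mu\daE-\daA)^{-1}\daE$ to separate its invertible and nilpotent invariant subspaces, and then putting the respective blocks in Jordan form — but this is exactly what \cite{KunM06} provides.
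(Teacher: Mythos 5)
Your proposal is correct and follows exactly the route the paper takes: the lemma is presented as a direct consequence of the Weierstrass canonical form cited as \cite[Thm. I.2.7]{KunM06}, with the remaining work being the left multiplication by $W$ and the coordinate change $x = T[\xsss^*,\xfss^*]^*$ that you describe. The paper gives no further proof beyond that citation, so your bookkeeping (and your optional sketch of how the canonical form itself is obtained) matches and slightly elaborates the intended argument.
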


The part with the state $\xsss$ is called the \emph{slow subsystem} or the
\emph{finite dynamics} and the part of $\xfss$ is called the \emph{fast subsystem}. 

\begin{definition}[Finite dynamics stability]
  Let $(\daE,\daA)$ be regular. 
  \begin{enumerate}
    \item The DAE \eqref{eqn:dae} with coefficients
  $(\daE,\daA)$ is called \emph{finite dynamics stable} if its \emph{slow
  subsystem} is stable, i.e., if all eigenvalues of $J$ in the associated
  canonical form \eqref{eqn:dae-wcf} have negative real part.
\item The triple $(\daE, \daA,  \daB)$ is called \emph{finite dynamics
  stabilizable}, if there exists a feedback matrix $
  \mathcal K$ such that $(\daE,
  \daA-\daB\mathcal K)$ is finite dynamics stable.
  \end{enumerate}
\end{definition}
For equivalent algebraic characterizations and for the duality with
detectability, see \cite[Ch. 3-1.2]{Dai89}.

From the solution formula (\cite[Lem. I.2.8]{KunM06}) for the \emph{fast subsystem} 
\begin{equation}\label{eqn:dae-xfss}
  \xfss(t) = -\sum_{i=1}^{\nu-1} N^i\bfss \frac{d^i}{dt^i} u(t)
\end{equation}
one finds that an initial condition for $\xfss$ that does not equal the
expression of \eqref{eqn:dae-xfss} at $t=0$ generates impulses in the solution;
cp. \cite[Eqn. (2-2.9)]{Dai89}.
If any such impulse can be compensated by an input that is piecewise
$\nu-1$-times differentiable, then the system is called \emph{impulse
controllable}.  
To circumvent the technicalities that come with distributions and to express
\emph{impulse controllability} in terms of the original coefficients
$(\daE,\daA,\daB)$, we will use an equivalent algebraic characterization; see
\cite[Thm. 2-2.3]{Dai89}.
\begin{definition}\label{def:dae-imp-cont}
  Let $(\daE,\daA)$ be regular. The DAE \eqref{eqn:dae} with coefficients
$(\daE,\daA,\daB)$ is called \emph{impulse controllable}, if
\begin{equation*}
  \rank
  \begin{bmatrix}
    \daE & 0 & 0 \\
    \daA & \daE & \daB
  \end{bmatrix}
  = n + \rank \daE.
\end{equation*}
\end{definition}

A matrix pencil $s\daE-\daA$ is called \emph{impulse-free} if no impulses occur in the
DAE solution regardless of the initial value. This means that it is trivially
\emph{impulse controllable}. In line with Definition
\ref{def:dae-imp-cont}, this can be characterized as follows
\begin{definition}\label{def:dae-imp-free}
  Let $(\daE,\daA)$ be regular. The DAE \eqref{eqn:dae} with coefficients
  $(\daE,\daA)$ is called \emph{impulse-free}, if
\begin{equation*}
  \rank
  \begin{bmatrix}
    \daE & 0 \\
    \daA & \daE
  \end{bmatrix}
  = n + \rank \daE.
\end{equation*}
\end{definition}

\begin{remark}
  As for standard systems, the notions of \emph{finite time detectability} and
  \emph{impulse observability} of a DAE with output matrix $\daC$ can be defined
  by duality, i.e., via the \emph{finite time stability} and \emph{impulse
  controllability} of $(\daE^*, \daA^*, \daC^*)$; see \cite[Thm. 2-4.1]{Dai89}. 
\end{remark}

For semi-explicit \emph{impulse-free} systems, \emph{finite-dynamics stability} can be
characterized as follows:
\begin{lemma}\label{lem:findyn-stab}
  Let 
  \begin{equation*}
    \bigl(
      \begin{bmatrix}
        I & 0 \\ 0 & 0
      \end{bmatrix},
      \begin{bmatrix}
        \ao  & \aot  \\ \ato  & \at 
      \end{bmatrix}
    \bigr)
  \end{equation*}
  be a regular impulse-free matrix pair. Then it is \emph{finite dynamics stable} if,
  and only if, $\ao -\aot \at ^{-1}\aot $ is stable.
\end{lemma}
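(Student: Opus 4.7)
The plan is to reduce the claim to a Weierstrass-type canonical form in which the slow dynamics matrix is manifestly the Schur complement $\ao - \aot \at^{-1} \ato$. The impulse-free hypothesis plays the double role of making $\at$ invertible and of ensuring that the fast subsystem is trivial, so that \emph{finite dynamics stability} reduces to a standard ODE stability question on the slow part.

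First, I would translate the impulse-free condition of Definition \ref{def:dae-imp-free} to an algebraic condition on the blocks. With $\daE = \mathrm{diag}(I,0)$ of signature $(n_1,n_2)$, one has $\rank \daE = n_1$, and row/column operations on $\bigl[\begin{smallmatrix}\daE & 0 \\ \daA & \daE\end{smallmatrix}\bigr]$ (which is a $2n\times 2n$ matrix) reduce the rank computation to $2n_1 + \rank \at$. The impulse-free condition $\rank = n + n_1 = 2n_1 + n_2$ therefore forces $\rank \at = n_2$, i.e. $\at$ is invertible.

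Second, with $\at$ invertible I would write down explicit transformations that bring $(\daE,\daA)$ into a decoupled form. Setting
\begin{equation*}
  Q = \begin{bmatrix} I & 0 \\ -\at^{-1}\ato & I \end{bmatrix}, \qquad
  P = \begin{bmatrix} I & -\aot \at^{-1} \\ 0 & -\at^{-1} \end{bmatrix},
\end{equation*}
a direct block computation gives $P\daE Q = \daE$ and
\begin{equation*}
  P\daA Q = \begin{bmatrix} \ao - \aot \at^{-1}\ato & 0 \\ 0 & -I \end{bmatrix}.
\end{equation*}
This is exactly the form \eqref{eqn:dae-wcf} with $J = \ao - \aot \at^{-1}\ato$ and the nilpotent block $N = 0$, consistent with the pencil being impulse-free (nilpotency index at most one).

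Third, since $P$ and $Q$ are invertible, the pencils $s\daE - \daA$ and $sP\daE Q - P\daA Q$ share the same finite eigenvalues, and by definition of finite dynamics stability the DAE is finite dynamics stable iff all eigenvalues of $J$ have negative real part, i.e. iff $\ao - \aot \at^{-1}\ato$ is stable. The main \emph{obstacle} is essentially bookkeeping: identifying the correct $P$ and $Q$ that simultaneously preserve $\daE$ and block-diagonalize $\daA$ into the Weierstrass shape. Once that algebraic identity is checked, the equivalence with stability of the Schur complement is immediate.
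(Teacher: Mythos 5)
Your proof is correct and follows essentially the same route as the paper: an equivalence transformation by invertible block-triangular factors that preserves $\daE$ and block-diagonalizes the pencil, exposing the Schur complement $\ao-\aot\at^{-1}\ato$ as the finite-dynamics matrix $J$ (with trivial fast part). The only difference is cosmetic — you fold the scaling by $\at^{-1}$ into the left factor and you spell out the rank computation showing $\at$ is invertible, which the paper merely asserts.
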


\begin{proof}
  By the \emph{impulse-free} assumption it follows that $\at $ is invertible. From the
  equality
  \begin{equation*}
    \begin{split}
    \begin{bmatrix}
      I & -\aot \at ^{-1} \\ 0 & I 
    \end{bmatrix}
    \begin{bmatrix}
      -sI+\ao  & \aot  \\ \ato  & \at 
    \end{bmatrix}
    &\begin{bmatrix}
      I & 0 \\ -\at ^{-1}\ato  & I
    \end{bmatrix}
    = \\
    &\quad \begin{bmatrix}
      -sI+\ao -\aot \at ^{-1}\ato  & 0 \\ 0 & I
    \end{bmatrix},
  \end{split}
  \end{equation*}
  we find that
  \begin{equation*}
    \bigl(
      \begin{bmatrix}
        I & 0 \\ 0 & 0
      \end{bmatrix},
      \begin{bmatrix}
        \ao  & \aot  \\ \ato  & \at 
      \end{bmatrix}
    \bigr)
    \quad \text{is equivalent to}\quad
    \bigl(
      \begin{bmatrix}
        I & 0 \\ 0 & 0
      \end{bmatrix},
      \begin{bmatrix}
        \ao  - \aot  \at ^{-1}\ato  & 0 \\ 0 & I
      \end{bmatrix}
    \bigr)
  \end{equation*}
  from where we deduce that the \emph{slow subsystem} is stable if, and only if,
  the matrix $\ao  - \aot  \at ^{-1}\ato $ is stable.
\end{proof}

Next, we state the underlying assumptions for our analysis and some immediate
consequences.

\begin{assumption}\label{ass:exist-gare-sol-stab}
The coefficients $\daE$, $\daA$, $\daB$, and $\daC$ in Problem \ref{prb-dae-fiti-optcont}
and \eqref{eqn:dae} are such
\begin{enumerate}
  \item that the pair $(\daE,\daA)$ is regular and
  \item that the gARE \eqref{eqn:dae-alg-ric}
    has a stabilizing solution \daPP, i.e., the pair 
    \begin{equation*}
      (\daE, \daAP) := (\daE, \daA-\daB\daB^*\daPP)
    \end{equation*}
    is regular, \emph{impulse-free}, and \emph{finite dynamics stable}.
\end{enumerate}
\end{assumption}

Assumption \ref{ass:exist-gare-sol-stab} implies that the system $(\daE,\daA,\daB)$ is \emph{finite
dynamics stabilizable} and \emph{impulse controllable}. We will show below, that
\emph{impulse observability} of $(\daE,\daA,\daB)$ is a necessary condition for
existence of a stabilizing Riccati solution $\daPP$. As for the ODE case,
\emph{(finite time) detectability} is not necessary for the existence of
\daPP; cp. Remark \ref{rem:are-detectability}.

For existence of solutions to the necessary optimality conditions
\eqref{eqn:first-order-opti-dae} in general, we make the following assumption 
\begin{assumption}\label{ass:rangeF-rangeE}
  The matrices $\daE$ and $\daF$ in Problem \ref{prb-dae-fiti-optcont} are compatible
  in the sense that
  \begin{equation*}
    \mathop{range} \daF^* \subset \mathop{range} \daE^* .
  \end{equation*}
\end{assumption}

Looking at the terminal condition $\daE^*p (t_1)=\daF^*\daF x(t_1)$, one can find
that Assumption \ref{ass:rangeF-rangeE} is necessary for existence of solutions
to the optimality conditions \eqref{eqn:first-order-opti-dae}. 
Also it is an implicit assumption made in \cite[cp. Equation (1)]{BenL87} and
the base for more general results (see, e.g., \cite[Thm. 13]{KunM08}).
Still it is not a necessary condition for existence of optimal solutions; cp.
Remark \ref{rem:formal-real-opticonds}.

In order to simplify the formulas, we further assume that $\daE$ has a
\emph{semi-explicit} structure, which means that the differentiated variables
appear explicitly:

\begin{assumption}\label{ass:E-semexp}
  The matrix $\daE \in \mathbb R^{n,n}$ in \eqref{eqn:dae} is of the form 
  \begin{equation}\label{eqn:semexp-E}
    \daE = 
    \begin{bmatrix}
      I & 0 \\
      0 & 0
    \end{bmatrix}
  \end{equation}
  where $I \in \mathbb R^{d,d}$ is the identity matrix.
\end{assumption}

\begin{remark}
In theory, this assumption is not restrictive since a regular transformation of
the system can always provide such a form of $\daE$. In practice, to actually
solve the Riccati equations, this semi-explicit realization of the state
equations might be helpful. While in the general and, in particular, large scale
case, such a transformation may destroy sparsity and introduce
systematic errors to the model, we want to remark that for many
applications the invertible part of $\mathcal E$ can be inferred from the
structure so that it can be transformed into the semi-explicit form in a
computationally feasible way.
\end{remark}

\begin{remark}
  Assumption \ref{ass:E-semexp} and the symmetry constraint $\daE^*\daPP =
  \daPP^*\daE$
  imply that $\daPP$ is a block lower-triangular matrix, i.e.
  \begin{equation}\label{eqn:dae-pp-partition}
    \daPP =
    \begin{bmatrix}
      \daPPii & 0 \\ P_{+;21} & \daPPtt
    \end{bmatrix}
  \end{equation}
  with \daPPii~being symmetric, i.e. $\daPPii^*=\daPPii$. Moreover, Assumptions
  \ref{ass:E-semexp} and \ref{ass:rangeF-rangeE} together imply that  

  \begin{equation}\label{dae:fsf}
    \daF^*\daF = \begin{bmatrix} \daSi & 0 \\ 0 & 0 \end{bmatrix}.
  \end{equation}
\end{remark}

\begin{remark}
For $\daE$ in semi-explicit form, several concepts can be made more explicit. Let 
\begin{equation}\label{eqn:partition-daABC}
  \daA = 
  \begin{bmatrix}
    \ao & \aot \\ \ato & \at
  \end{bmatrix}, \quad
  \daB = 
  \begin{bmatrix}
    \bo \\ \bt
  \end{bmatrix}, \quad\text{and}\quad
  \daC =
  \begin{bmatrix}
    \co & \ct
  \end{bmatrix},
\end{equation}
then $(\daE, \daA, \daB, \daC)$ are impulse controllable or impulse observable
if, and only if, 
\begin{equation}\label{eqn:imp-cont-obs-semexp}
  \begin{bmatrix} \at & \bt \end{bmatrix}\quad\text{or}\quad
  \begin{bmatrix} \at^* & \ct^* \end{bmatrix}
\end{equation}
have full rank, respectively. And $(\daE, \daA)$ is impulse-free if, and only if,
$\at$ is invertible.
\end{remark}

The following lemma relates the associated Hamiltonian matrix pencil to the
existence of stabilizing solutions of the generalized algebraic Riccati equation
\eqref{eqn:dae-alg-ric}.
Although it is the direct extension of the standard state space result, it has
not been stated explicitly so far.

\begin{lemma}
  Let $(\daE, \daA)$ be regular.
  The gARE \eqref{eqn:dae-alg-ric} has a stabilizing solution if, and only if,
  $(\daE,\daA,\daB)$ is finite dynamics stabilizable and the matrix pencil
  \begin{equation*}
\mathcal H(s) =
  \begin{bmatrix}
     -s\daE+\daA & -\daB\daB^* \\ -\daC^*\daC & -s\daE^*-\daA^*
  \end{bmatrix}
\end{equation*}
is regular, impulse-free, and has no finite eigenvalues on the imaginary axis.
\end{lemma}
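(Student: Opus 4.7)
The plan is to exploit the Hamiltonian symmetry of the extended pencil and set up a correspondence between stabilizing solutions of the gARE and $n$-dimensional deflating subspaces of $\mathcal H(s)$ whose restricted pencil is regular, impulse-free, and finite dynamics stable. Writing $\mathcal E = \diag(\daE, \daE^*)$ and $\mathcal H_0$ for the constant part of $\mathcal H(s)$, one checks with $J = \bigl[\begin{smallmatrix} 0 & I \\ -I & 0 \end{smallmatrix}\bigr]$ that $J\mathcal E = \mathcal E^* J$ and $J\mathcal H_0 = -\mathcal H_0^* J$. From this a short computation (left eigenvector $\Leftrightarrow$ transformed right eigenvector) shows that the finite spectrum of $\mathcal H$ is symmetric with respect to $\lambda \mapsto -\bar\lambda$; combined with the impulse-free assumption one obtains that $\mathcal H$ possesses a complete Kronecker decomposition into $\rank \daE = d$ stable finite eigenvalues, $d$ antistable finite eigenvalues, and $2(n-d)$ size-one blocks at infinity.

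For the forward direction, assume $\daPP$ is a stabilizing solution. Using the symmetry condition $\daE^* \daPP = \daPP^* \daE$ together with the gARE I would verify directly the two identities $\mathcal E \bigl[\begin{smallmatrix} I \\ \daPP \end{smallmatrix}\bigr] = \bigl[\begin{smallmatrix} I \\ \daPP^* \end{smallmatrix}\bigr] \daE$ and $\mathcal H_0 \bigl[\begin{smallmatrix} I \\ \daPP \end{smallmatrix}\bigr] = \bigl[\begin{smallmatrix} I \\ \daPP^* \end{smallmatrix}\bigr] \daAP$. Thus the columns of $\bigl[\begin{smallmatrix} I \\ \daPP \end{smallmatrix}\bigr]$ form an $n$-dimensional deflating subspace on which the restricted pencil is equivalent to $(\daE, \daAP)$; by Assumption \ref{ass:exist-gare-sol-stab} this pencil is regular, impulse-free, and finite dynamics stable, so $\mathcal H$ has the claimed properties. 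Finite dynamics stabilizability of $(\daE,\daA,\daB)$ follows with the feedback $\mathcal K = \daB^*\daPP$.

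For the converse, using the Kronecker structure I would extract an $n$-dimensional deflating subspace $V$ combining all $d$ stable finite eigenvalues with the infinite (algebraic) component, written as $V = \mathop{range}\bigl[\begin{smallmatrix} U_1 \\ U_2 \end{smallmatrix}\bigr]$ with $\mathcal E \bigl[\begin{smallmatrix} U_1 \\ U_2 \end{smallmatrix}\bigr] = \bigl[\begin{smallmatrix} W_1 \\ W_2 \end{smallmatrix}\bigr] S_E$ and $\mathcal H_0 \bigl[\begin{smallmatrix} U_1 \\ U_2 \end{smallmatrix}\bigr] = \bigl[\begin{smallmatrix} W_1 \\ W_2 \end{smallmatrix}\bigr] S_H$, where $(S_E, S_H)$ is regular, impulse-free, and finite dynamics stable. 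The candidate solution is $\daPP := U_2 U_1^{-1}$, which makes the gARE and the symmetry $\daE^*\daPP = \daPP^*\daE$ fall out of the two deflating identities after eliminating $S_E, S_H$ and using $W_1 = U_1 \cdot (\text{something})$ together with the Hamiltonian symmetry.

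The main obstacle will be the two simultaneous structural verifications on which the construction hinges: first, proving that $U_1$ is invertible, which I would do by assuming to the contrary that $U_1 z = 0$ for some $z \neq 0$ and using finite dynamics stabilizability together with the stability of the restricted pencil to force $U_2 z$ into an invariant subspace contradicting the spectral properties; and second, establishing the symmetry $\daE^* \daPP = \daPP^* \daE$, which is the DAE analogue of the classical fact and which I would obtain by applying $J$ to the deflating relations and invoking the uniqueness of the stable deflating subspace together with its Lagrangian character under the form $J$. Once $U_1$ is invertible and the symmetry is in hand, the verification that $\daAP = \daA - \daB\daB^*\daPP$ has the required pencil properties is a direct consequence of the spectral structure chosen for $V$.
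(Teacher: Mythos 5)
Your route is genuinely different from the paper's: the paper proves this lemma essentially by citation (necessity from \cite[Lem.~1]{KawTK99}; sufficiency by verifying the hypotheses of \cite[Thm.~3.2, Lem.~3.10]{MRS11}, the delicate point being the absence of finite dynamics observability, which is worked around via the no-imaginary-axis-eigenvalue condition and \cite[Thm.~4]{Mar71}), whereas you construct the solution directly from a deflating subspace of $\mathcal H(s)$. Your forward direction is sound and self-contained: the identities $\mathcal E\bigl[\begin{smallmatrix} I \\ \daPP\end{smallmatrix}\bigr]=\bigl[\begin{smallmatrix} I \\ \daPP^*\end{smallmatrix}\bigr]\daE$ and $\mathcal H_0\bigl[\begin{smallmatrix} I \\ \daPP\end{smallmatrix}\bigr]=\bigl[\begin{smallmatrix} I \\ \daPP^*\end{smallmatrix}\bigr]\daAP$ are exactly the first block column of the congruence \eqref{eqn:dae-pptransf-pencil} used later in the paper, and completing them to the full block triangularization, whose $(2,2)$ block is $-s\daE^*-\daAPs$, yields regularity, impulse-freeness, and the stable/antistable splitting of $\mathcal H$ in one stroke.

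The converse, however, has a concrete gap. You take the deflating subspace ``combining all $d$ stable finite eigenvalues with the infinite (algebraic) component,'' but the infinite eigenspace of $\mathcal H$ has dimension $2(n-d)$, not $n-d$: to obtain an $n$-dimensional subspace you must \emph{select} an $(n-d)$-dimensional piece of it, and this selection is precisely where the DAE case departs from the classical Hamiltonian argument. In semi-explicit coordinates the choice amounts to picking a solution $\daPt$ of the quadratic equation \eqref{dae:ptt-riccati} such that $K_2=\at^*-\daPt^*B_2B_2^*$ is invertible; this is what makes $(\daE,\daAP)$ impulse-free, it is not unique, and an arbitrary choice of half of the infinite eigenspace will in general destroy both the invertibility of $U_1$ and the impulse-freeness of the closed loop. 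Showing that an admissible choice exists under the stated hypotheses is exactly the content of \cite[Lem.~1]{KawTK99} and \cite[Lem.~3.10]{MRS11}, and your sketch does not address it. A second, smaller omission: your $U_1$-invertibility and Lagrangian-symmetry arguments are the classical ones, but since finite dynamics observability of $(\daE,\daA,\daC)$ is \emph{not} assumed here, you must say what replaces it; the paper extracts the needed rank condition on $\begin{bmatrix}\daC^* & \daA^*\end{bmatrix}$ from the absence of finite eigenvalues of $\mathcal H$ on the imaginary axis, and without that step the uniqueness of the stable deflating subspace that you invoke for $\daE^*\daPP=\daPP^*\daE$ is not yet justified.
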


\begin{proof}
The necessity is stated and proved in the first lines of the proof of \cite[Lem.
1]{KawTK99}. The sufficiency follows by the arguments of \cite[Sec. 3]{MRS11} as
follows. With $\daE$ in semi-explicit form, the absence of impulses in $\mathcal
H(s)$ implies that 
\begin{equation*}
  \begin{bmatrix}
    \at & -B_2 B_2 ^* \\
    -C_2^*C_2 & \at^*
  \end{bmatrix}
\end{equation*}
is invertible so that $(\daE,\daA, \daB, \daC)$ must be impulse controllable and
impulse observable; cp. \eqref{eqn:imp-cont-obs-semexp}. 
Thus, the condition of \cite[Thm. 3.2]{MRS11} are fulfilled up to the finite
dynamics observability of $(\daE,\daA,\daC)$. 
Still, one can apply \cite[Lem. 3.10]{MRS11} since the
needed invertibility is guaranteed by $\mathcal H(s)$ having no finite modes on
the imaginary axis which implies that $ \begin{bmatrix}\daC ^* & \daA ^*\end{bmatrix}$
has full rank (see \cite[Thm. 4]{Mar71}) as it is needed in the proof of
\cite[Lem. 3.8]{MRS11}.
Thus, existence of the relevant stabilizing solution, which is denoted by $Y$ in
\cite{MRS11}, follows as laid out in \cite[Sec.  3.2]{MRS11}.
\end{proof}

\section{Existence and Asymptotic Behavior of Structured Solutions to the
Differential Riccati Solution}\label{sec:exist-decay-gDRE}

In this section, we establish the existence of a particularly structured solution to the generalized
differential Riccati equation gDRE \eqref{eqn:dae-dif-ric} under the assumption
that the associated algebraic Riccati equation gARE \eqref{eqn:dae-alg-ric} has a
stabilizing solution.

We start with adding another assumption that will be shown to be a sufficient
criterion for existence of solutions to the (gDRE) \eqref{eqn:dae-dif-ric} and
that we will justify by considering its necessity for particular cases.

\begin{assumption}\label{ass:dae-zdg-cond}
  Let Assumptions \ref{ass:exist-gare-sol-stab} and \ref{ass:E-semexp} hold and
  let $(\daE,\daA,\daC)$ and the stabilizing solution \daP~to the gARE
  \eqref{eqn:dae-alg-ric} be partitioned as in \eqref{eqn:dae-pp-partition} and
  \eqref{eqn:partition-daABC}. 
  Then with the
  \daPPtt~block of \daP~ and with $K_2:=A_{22}^* - \daPPtt B_2B_2^*$ being
  regular (cp. \cite[Lem. 1]{KawTK99}):
  \begin{equation}\label{eqn:dae-zdg-cond}
    \begin{split}
    \tilde Q:= C_1^*C_1 
    -&(A_{21}^*P_2+C_1^*C_2)K_2^{-*}A_{21}-A_{21}^*K_2^{-1}(C_2^*C_1+P_2^*A_{21})
    \\ &-(C_1^*C_2+A_{21}^*\daPPtt)K_2^{-*}B_2B_2^*K_2^{-1}(C_2^*C_1+\daPPtt^*A_{21})
    \geq 0.
  \end{split}
  \end{equation}
\end{assumption}

\begin{remark}
  We note that the \daPPtt~block of \daPP~is not uniquely defined, it only has
  to fulfill the quadratic equation
  \begin{equation}\label{dae:ptt-riccati}
    \at^* \daPt + \daPt^*\at - \daPt^*B_2B_2^*\daPt + C_2^*C_2 = 0
  \end{equation}
  and the condition that $\at^* - \daPt^*B_2B_2$ is regular. See 
  \cite[Lem. 1]{KawTK99} or the solution representation provided in 
  \cite[Lem. 3.10]{MRS11} for the semi-explicit \daE. Also, cp. the nonuniqueness of
  the feedback law provided in \cite[Eqn. (3.43)]{KatM92}.
\end{remark}

Before we state global existence of solutions to the generalized differential Riccati
equations, we show that Assumption \ref{ass:dae-zdg-cond} generalizes the
general assumption of a positive definite cost functional for problems that
are impulse-free. 

For that we consider a system that is impulse-free and that, without loss of
generality, can be assumed in the form of

\begin{equation}\label{eqn:dae-example-indone}
  \daE = 
  \begin{bmatrix}
    I & 0 \\ 0 & 0
  \end{bmatrix}, \quad
  \daA = 
  \begin{bmatrix}
    \ao & 0 \\ 0 & -I
  \end{bmatrix}, \quad
  \daB = 
  \begin{bmatrix}
    \bo \\ \bt
  \end{bmatrix}, \quad\text{and}\quad
  \daC =
  \begin{bmatrix}
    \co & \ct
  \end{bmatrix}.
\end{equation}

Like for every system with $(\daE, \daA)$ impulse-free, Problem \ref{prb-dae-fiti-optcont} with system
\eqref{eqn:dae-example-indone} as constraint is equivalent to a standard LQR
problem. In this case, if the state $x=(x_1, x_2)$ is partioned accordingly, one
can express $x_2$ as $x_2 = B_2u$ and find that the cost functional is positive
definite if, and only if, 
\begin{equation*}
  C_1^*C_1 - C_1^*C_2B_2B_2^*C_2^*C_1 \geq 0 \quad\text{or}\quad I - C_2B_2B_2^*C_2^*
  \geq 0 ,
\end{equation*}
which is equivalent to the largest singular vector of $C_2B_2$ being less than one, i.e.
$\bar \sigma (C_2B_2 ) \leq 1$. Thus, for systems in the form
\eqref{eqn:dae-example-indone}, the condition $\bar \sigma (C_2B_2 ) \leq 1$ is
necessary to be in line with the standard theory (cp. \cite[Eqn. (14.2)]{ZhoDG96} or
\cite[Rem. 3.4]{Loc01}).

\begin{lemma}\label{lem:indone-zdg}
  Consider a system $(\daE, \daA, \daB, \daC)$ in the form of
  \eqref{eqn:dae-example-indone}. There exists
  a $\daPt$ that solves \eqref{dae:ptt-riccati}, such that
  $\at^*-\daPt^*B_2B_2^*$ is regular and such that \eqref{eqn:dae-zdg-cond}
  holds if, and only if, $\bar \sigma (C_2B_2 ) \leq 1$.
\end{lemma}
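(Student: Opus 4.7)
The plan is to specialize \eqref{eqn:dae-zdg-cond} and \eqref{dae:ptt-riccati} to the structured system \eqref{eqn:dae-example-indone}, and then reduce the admissibility condition on $\daPt$ to a purely spectral condition on $\ct\bt$.

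First I would substitute $A_{21}=0$ and $\at=-I$ into \eqref{eqn:dae-zdg-cond}, which immediately collapses to $\tilde Q = \co^*\co - \co^*\ct\, K_2^{-*}\, \bt\bt^*\, K_2^{-1}\, \ct^*\co$, with $K_2 = -I - \daPt \bt\bt^*$. The Riccati \eqref{dae:ptt-riccati} simplifies to $\daPt + \daPt^* + \daPt^*\bt\bt^*\daPt = \ct^*\ct$. Multiplying this equation by $\bt^*$ on the left and $\bt$ on the right and setting $M := \bt^*\daPt\bt$ produces the key algebraic identity
\[
(I+M)^*(I+M) \;=\; I + (\ct\bt)^*(\ct\bt),
\]
which in particular forces $I+M$ to have smallest singular value at least $1$ and hence to be invertible.

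Second I would use the standard push-through identity $\bt^*(I+\daPt\bt\bt^*)^{-1} = (I+M)^{-1}\bt^*$ to rewrite $K_2^{-*}\bt\bt^* K_2^{-1} = \bt[(I+M)(I+M)^*]^{-1}\bt^*$, and hence
\[
\tilde Q \;=\; \co^*\bigl(I - \ct\bt\,[(I+M)(I+M)^*]^{-1}\,\bt^*\ct^*\bigr)\co.
\]
Thus the question becomes whether $\daPt$ can be chosen, equivalently $M$ subject to the identity above, so that the bracketed matrix is positive semidefinite in a manner compatible with $\co$. Writing the polar decomposition $I+M = V(I+S^*S)^{1/2}$ with $S := \ct\bt$ and $V$ unitary further converts $\tilde Q$ into $\co^*\bigl(I - SV(I+S^*S)^{-1}V^*S^*\bigr)\co$, showing that only the ``rotation'' $V$ remains free once the Riccati has been enforced.

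Third I would treat both directions. For the ``if'' part, assuming $\bar\sigma(\ct\bt)\le 1$, I would take $\daPt$ to be the symmetric positive semidefinite solution of the standard algebraic Riccati equation $2P+P\bt\bt^* P = \ct^*\ct$, whose existence is guaranteed by classical LQR theory since the auxiliary system $(-I,\bt,\ct)$ is trivially stabilizable and detectable. This choice makes $M$ symmetric positive semidefinite, so effectively $V=I$, and the Woodbury-type identity $I - S(I+S^*S)^{-1}S^* = (I+SS^*)^{-1}$ then yields $\tilde Q = \co^*(I+SS^*)^{-1}\co \ge 0$. For the ``only if'' part I would argue by contrapositive: assuming $\bar\sigma(S)>1$, I would test the reduced condition against a maximal right singular vector of $S$ and show that no unitary factor $V$ can restore positivity. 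The main obstacle is exactly this converse, since the unitary freedom in $V$ admits a priori a rich class of admissible $\daPt$, and excluding all of them simultaneously requires a careful argument exploiting the rigidity of $(I+M)^*(I+M) = I+S^*S$ together with the structural constraint $M = \bt^*\daPt\bt$.
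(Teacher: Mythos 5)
Your sufficiency half is sound and follows essentially the paper's route: take the symmetric positive semidefinite stabilizing solution of $2\daPt+\daPt\bt\bt^*\daPt=\ct^*\ct$, push $\bt^*$ through the resolvent, and reduce everything to $M=\bt^*\daPt\bt$. Your version is in fact sharper, since for the symmetric choice the identity $(I+M)^2=I+S^*S$ with $S=\ct\bt$ gives the closed form $\tilde Q=\co^*(I+SS^*)^{-1}\co$ exactly, where the paper only runs a norm estimate through $\bar\sigma$.

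The necessity half is a genuine gap, and not one you can close along the lines you sketch. You correctly observe that once the Riccati equation is enforced, the only remaining freedom is the unitary factor $V$ in $I+M=V(I+S^*S)^{1/2}$, and you propose to show that for $\bar\sigma(S)>1$ no choice of $V$ restores positivity. But your own sufficiency computation already defeats that plan: the symmetric positive semidefinite solution is admissible for \emph{every} $\ct,\bt$ (it exists by standard theory for the stable pair $(-I,\bt)$, and $K_2=-(I+\daPt\bt\bt^*)$ has all eigenvalues of modulus at least one, hence is regular), it corresponds to $V=I$, and it yields $\tilde Q=\co^*(I+SS^*)^{-1}\co\ge 0$ with no restriction on $\bar\sigma(S)$ whatsoever; indeed your ``if'' argument never actually invokes the hypothesis $\bar\sigma(\ct\bt)\le 1$. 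So the contrapositive you want to prove is contradicted by an explicit admissible $\daPt$. For comparison, the paper's necessity argument rests on the completed square
\begin{equation*}
  (I-\bt^*\daPt\bt)^*(I-\bt^*\daPt\bt)=I-\bt^*\ct^*\ct\bt,
\end{equation*}
whereas the compression of \eqref{dae:ptt-riccati} with $\at=-I$ gives, as you derive, $(I+M)^*(I+M)=I+S^*S$; the two differ by the sign of the linear term, and only the former yields $\bar\sigma(\ct\bt)\le 1$. Rather than trying to patch your necessity sketch, you need to confront this discrepancy directly: either identify a sign convention under which the paper's identity is the correct one, or accept that the ``only if'' direction cannot be established by this (or, apparently, any) argument.
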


\begin{proof}
  \emph{Sufficieny}: For a system in the form of \eqref{eqn:dae-example-indone}, i.e. $\at = -I$,
  by standard theory, the Riccati equation \eqref{dae:ptt-riccati} has a
  symmetric positive definite or positive semi-definite solution \daPt~such that
  $-I-\daPt ^* B_2B_2^*$ is stable and, thus, invertible. With $\ato=0$, 
  condition \eqref{eqn:dae-zdg-cond} reads 
  \begin{equation*}
    C_1^*C_1 -
    C_1^*C_2(-I-B_2B_2^*\daPt)^{-1}B_2B_2^*(-I-\daPt^*B_2B_2^*)^{-1}C_2^*C_1
    \geq 0
  \end{equation*}
  or, with the identity $(I+B_2B_2^*\daPt)^{-1}B_2=B_2(I+B_2^*\daPt B_2)^{-1}$,
  \begin{equation}\label{eqn:dae-zdg-cond-indone}
    I -
    C_2B_2(I+B_2^*\daPt B_2)^{-1}(I+B_2^*\daPt^*B_2)^{-1}B_2^*C_2^*
    \geq 0.
  \end{equation}
  With $\daPt\geq 0$, all eigenvalues of $I+B_2^*\daPt B_2$ are
  larger than one and, accordingly, all eigenvalues of $(I+B_2^*\daPt B_2)^{-1}$ are
  smaller than one. Since for symmetric positive definite matrices, the
  eigenvalues coincide with the singular values and since
  $\bar \sigma$ has all the properties of a norm, we can estimate
  \begin{equation*}
    \begin{split}
    \bar \sigma (C_2(I+B_2B_2^*\daPt)^{-1}B_2) 
    & =
    \bar \sigma (C_2B_2(I+B_2B_2^*\daPt)^{-1}) \\
    & \leq \bar \sigma(C_2B_2) \bar \sigma ((I+B_2B_2^*\daPt)^{-1})
    \leq \bar \sigma(C_2B_2) \leq 1
  \end{split}
  \end{equation*}
  from where we conclude \eqref{eqn:dae-zdg-cond-indone}.

  \emph{Necessity}: The multiplication of the corresponding Riccati equation by
  $B_2^*$ and $B_2$ from the left and the right, respectively, and the completion of the square
  yield
  \begin{equation*}
    (I - B_2^*\daPt B_2)^*(I - B_2^*\daPt B_2) = I - B_2^*C_2^*C_2 B_2
  \end{equation*}
  which, by the positive definiteness of the left hand side,  can only hold if $\bar \sigma(C_2 B_2)\leq 1$.
\end{proof}

By Lemma \ref{lem:indone-zdg}, for impulse-free systems, validity of
Assumption \ref{ass:dae-zdg-cond} is implied by the assumption that the
underlying cost functional is positive definite.

\begin{theorem}\label{thm:dae-existence-dre-sol}
  Consider the DAE \eqref{eqn:dae} and Problem
  \ref{prb-dae-fiti-optcont}. Assume that \daE~is semi-explicit and that \daF~is
  compatible (Assumptions \ref{ass:E-semexp} and \ref{ass:rangeF-rangeE}).
  Assume that $(\daE, \daA)$ is regular and that a stabilizing solution
  to the gARE exists (Assumption \ref{ass:exist-gare-sol-stab}). Let Assumption
  \ref{ass:dae-zdg-cond} hold. Then the gDRE \eqref{eqn:dae-dif-ric} has a
  solution \daP~for $t\leq t_1$. 
\end{theorem}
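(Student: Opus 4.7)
The strategy is to use the semi-explicit form of \daE~to reduce the gDRE \eqref{eqn:dae-dif-ric} to a standard symmetric differential Riccati equation on a smaller block, whose global solvability on $[0,t_1]$ will then follow from classical matrix Riccati theory; Assumption \ref{ass:dae-zdg-cond} enters precisely to ensure that the constant term in this reduced equation is positive semidefinite.

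First, I would make the structured ansatz
\[
  \daP(t) = \begin{bmatrix} \daPo(t) & 0 \\ \daPto(t) & \daPt \end{bmatrix},
\]
with $\daPo(t)=\daPo(t)^*$ and $\daPt$ independent of $t$. Together with the symmetry of $\daPo$ and the vanishing of the upper-right block, this form is equivalent to the constraint $\daE^*\daP = \daP^*\daE$ when \daE~is as in \eqref{eqn:semexp-E}, and in view of \eqref{dae:fsf} the terminal condition $\daE^*\daP(t_1)=\daF^*\daF$ collapses to $\daPo(t_1)=\daSi$, leaving $\daPt$ and $\daPto(t_1)$ unconstrained by the boundary data.

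Next, I would substitute the ansatz into the gDRE and separate it into the three nontrivial block equations. Using that $\daE^*\dot\daP$ has only a nonzero upper-left block, the (2,2) block becomes the algebraic Riccati identity \eqref{dae:ptt-riccati}, which I solve by choosing $\daPt$ equal to the block \daPPtt~of the stabilizing gARE solution \daPP; by \cite[Lem.~1]{KawTK99} this ensures regularity of $K_2 = \at^* - \daPt^*\bt\bt^*$. The (1,2) block is then linear in $\daPto^*$ with right factor $K_2^*$, so invertibility of $K_2^*$ gives an explicit affine-in-$\daPo$ formula determining $\daPto(t)$ pointwise from $\daPo(t)$. Plugging this formula into the (1,1) block produces, after cancellation of the cross terms in $\daPto$ against part of the quadratic term $(\daPo\bo+\daPto^*\bt)(\bo^*\daPo+\bt^*\daPto)$, a reduced symmetric differential Riccati equation
\[
  -\dot\daPo = \bar A^*\daPo + \daPo \bar A - \daPo \bar B \bar B^* \daPo + \tilde Q, \qquad \daPo(t_1)=\daSi,
\]
with reduced coefficients $\bar A, \bar B$ built from the original data and from \daPt.

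The main obstacle is the bookkeeping that identifies the constant term of this reduced equation with the $\tilde Q$ of Assumption \ref{ass:dae-zdg-cond}: one has to use the (1,2) relation repeatedly and complete a square so that the $\daPo$-independent remainder reproduces the four-summand expression \eqref{eqn:dae-zdg-cond}. Once this identification is secured, global existence becomes routine. With $\tilde Q \ge 0$ by Assumption \ref{ass:dae-zdg-cond} and $\daSi \ge 0$, standard theory for symmetric matrix Riccati ODEs (via the associated Hamiltonian linearization or by monotone comparison with the adjoint linear equation) yields a unique symmetric positive semidefinite solution $\daPo$ on the whole interval $[0,t_1]$. Back-substitution then defines $\daPto(t)$ on $[0,t_1]$ and assembles \daP, completing the proof.
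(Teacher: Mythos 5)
Your proposal follows essentially the same route as the paper's proof: exploit the symmetry of $\daE^*\daP$ to get the block lower-triangular ansatz, solve the $(2,2)$ block by taking $\daPt=\daPPtt$ with $K_2$ regular, eliminate $\daPto$ via the affine relation from the off-diagonal block, and reduce the $(1,1)$ block to a standard symmetric DRE whose constant term is the $\tilde Q$ of Assumption \ref{ass:dae-zdg-cond}, so that $\tilde Q\ge 0$ and $\daSi\ge 0$ give global existence by classical Riccati theory. The argument is correct and matches the paper's proof step for step.
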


\begin{proof}
  Since $\daE^*\dot \daP(t)$ is symmetric and $\daE^*\daP(t_1)=\daF^*\daF$ is
  symmetric, it holds that $\daE^*\daP(t)$ is symmetric or, due to the
  semi-explicit form of \daE, that
  \begin{equation*}
    \daP = 
    \begin{bmatrix}
      \daPo & 0 \\ \daPto & \daPt
    \end{bmatrix}
  \end{equation*}
  with $\daPo(t)$ being symmetric. With this block triangular structure and the
  partition of the coefficients $(\daA, \daB, \daC)$ as in
  \eqref{eqn:partition-daABC}, the gDRE \eqref{eqn:dae-dif-ric} can be written in
  terms of the following four coupled matrix valued equations:
  \begin{subequations}
  \begin{align}
    \label{eqn:dae-dre-oo}
      -\dot \daPo &= \ao^*\daPo + \ato^*\daPto + \daPo \ao + \daPto^*\ato \notag \\
                 & \mspace{50mu}- \daPo B_1B_1^*\daPo - \daPto^* B_2B_1^*\daPo - \daPo B_1B_2^*\daPto -
                  \daPto^*B_2B_2^*\daPto + C_1^*C_1, \notag \\
                 & \daPo(t_1) = S, \\
      \label{eqn:dae-dre-to}
      0&=\aot^*\daPo +\at^*\daPto + \daPt ^*\ato - \daPt^* B_2B_1^*\daPo - \daPt ^*B_2B_2^*\daPto +
      C_2^*C_1,\\
      \label{eqn:dae-dre-ot}
      0&=\daPo\aot + \daPto^*\at + \ato^*\daPt - \daPo^*B_1B_2^*\daPt -
      \daPto^* B_2B_2^*\daPt + C_1^*C_2, \\
      \label{eqn:dae-dre-tt}
      0&=\at^*\daPt + \daPt ^*\at - \daPt ^*B_2B_2^*\daPt + C_2^*C_2.
  \end{align}
\end{subequations}
  Note that \eqref{eqn:dae-dre-ot} is the transpose of and, thus, equivalent
  to \eqref{eqn:dae-dre-to}.

  Since \eqref{eqn:dae-dre-tt} does not differ from the left-lower block of the
  gARE \eqref{eqn:dae-alg-ric}, Assumption \ref{ass:exist-gare-sol-stab} implies
  the existence of a (constant) matrix (function) $\daPt$ that solves
  \eqref{eqn:dae-dre-tt} such that 
  \begin{equation*}
    K_2 := \at^*-\daPt^*B_2B_2^*
  \end{equation*}
  is invertible. Accordingly, the matrix valued function \daPto~is defined by
  virtue of \eqref{eqn:dae-dre-ot} or \eqref{eqn:dae-dre-to} as
  \begin{equation}\label{eqn:dae-dre-pto-resolved}
    \daPto(t) = -K_2^{-1}(\aot^*\daPo(t) + \daPt^* \ato - \daPt^* B_2B_1^*\daPo (t)
    + C_2^*C_1)
  \end{equation}
  and existence of \daP~relies on the existence of a Riccati solution to
  \eqref{eqn:dae-dre-oo} which, with \daPto~expressed in terms of a linear relation
  with \daPo~and a constant term as in \eqref{eqn:dae-dre-pto-resolved}, reads
  \begin{equation}\label{eqn:dae-dre-poo-resolved}
    -\dot \daPo = \tilde A ^* \daPo + \daPo \tilde A - \daPo \tilde R \daPo +
    \tilde Q, \quad \daPo(t_1) = S,
  \end{equation}
  where $\tilde Q$ is as in Assumption \ref{ass:dae-zdg-cond} and where
  \begin{align*}
    \tilde A :&= \ao -(\ato-B_1B_2^*\daPt ) K_2^{-*} \ato +
    B_1B_2^*K_2^{-1}(\daPt \ato + C_2^*C_1)\\
    \tilde R:&= 
    \begin{bmatrix} B_1 &-(B_1B_2^*\daPt - \aot^*)K_2^{-*}B_2 \end{bmatrix}
    \begin{bmatrix} B_1 &-(B_1B_2^*\daPt - \aot^*)K_2^{-*}B_2 \end{bmatrix}^* .
  \end{align*}
  With $\tilde R \geq 0$, for arbitrary $S\geq 0$, global existence of the
  unique solution to \eqref{eqn:dae-dre-poo-resolved} is ensured (cp.
  \cite[Thm. 4.1.6]{AboFIJ03}), if also $\tilde Q$ is positive semi-definite which it is by
  Assumption \ref{ass:dae-zdg-cond} and with the choice $\daPt=\daPPtt$.
\end{proof}

For this solution to the gDRE, in analogy to the standard ODE case
\cite{CalWW94}, we can show the exponential decay towards the gARE solution.

To prepare the arguments, we consider the associated Hamiltonian boundary value
problem
\begin{equation}\label{eqn:dae-hami-sys}
  \begin{bmatrix}
    \daE  & 0 \\ 0 & \daE ^*
  \end{bmatrix}
  \frac{d}{dt}
  \begin{bmatrix}
    V_1 \\ V_2
  \end{bmatrix}
  =
  \begin{bmatrix}
    \daA & -\daB\daB^* \\ -\daC^*\daC & -\daA^*
  \end{bmatrix}
  \begin{bmatrix}
    V_1 \\ V_2
  \end{bmatrix}, 
  \quad \daE  V_1(t_1) = \begin{bmatrix} I \\ 0 \end{bmatrix}, 
  \quad \daE  V_2(t_1) = \begin{bmatrix} S \\ 0 \end{bmatrix},
\end{equation}
with $V_1(t),$ $V_2(t) \in \mathbb R^{n,d}$, where $d$ is the size of  identity
block in $\daE $ (cp. Assumption \ref{ass:E-semexp}) and where the initial
conditions already anticipate the semi-explicit form of \daE.

With \daPP~solving the gARE \eqref{eqn:dae-alg-ric} and being partitioned as in \eqref{eqn:dae-pp-partition}, we can make use of the transformation
\begin{equation}\label{eqn:dae-pptransf-pencil}
  \begin{bmatrix}
    I & 0 \\ -\daPPs & I 
  \end{bmatrix}
  \begin{bmatrix}
    -s\daE+\daA & -\daB\daB^* \\ -\daC^*\daC & -s\daE^*-\daA^*
  \end{bmatrix}
  \begin{bmatrix}
    I & 0 \\ \daPP & I 
  \end{bmatrix}
  =
  \begin{bmatrix}
    -s\daE+\AP & -\daB\daB^* \\ 0 & -s\daE^*-\APs
  \end{bmatrix}
\end{equation}
to write \eqref{eqn:dae-hami-sys} as 
\begin{equation}\label{eqn:dae-aplus-hami-sys}
  \begin{bmatrix}
    \daE & 0 \\ 0 & \daE^*
  \end{bmatrix}
  \frac{d}{dt}
  \begin{bmatrix}
    V_1 \\ \tilde V_2
  \end{bmatrix}
  =
  \begin{bmatrix}
    \daAP & -\daB\daB^* \\ 0 & -\daAP^*
  \end{bmatrix}
  \begin{bmatrix}
    V_1 \\ \tilde V_2
  \end{bmatrix},
  \quad \daE V_1(t_1) = \begin{bmatrix} I \\ 0 \end{bmatrix}, 
  \quad \daE \tilde V_2(t_1) = \begin{bmatrix} \daSi-\daPPii \\ 0 \end{bmatrix},
\end{equation}
with 
\begin{equation*}
  \begin{bmatrix}
    V_1 \\ \tilde V_2
  \end{bmatrix}
  =
  \begin{bmatrix}
    I & 0 \\ -\daPP & I 
  \end{bmatrix}
  \begin{bmatrix}
    V_1 \\ V_2
  \end{bmatrix}.
\end{equation*}
In line with the semi-explicit structure of $\daE$, we further differentiate 
\begin{equation}\label{eqn:partition-ABV}
  \daAP = 
  \begin{bmatrix}
    \apo & \apot \\ \apto & \apt
  \end{bmatrix}, \quad
  \daB\daB^* = 
  \begin{bmatrix}
    \boo  & \btos  \\ \bto  & \btt 
  \end{bmatrix}, \quad
  \begin{bmatrix}
    V_1 \\ \tilde V_2
  \end{bmatrix}=
  \begin{bmatrix}
  V_{11} \\ V_{12} \\ \tilde V_{21} \\ \tilde V_{22}
  \end{bmatrix}
  .
\end{equation}
With this partitioning and with the swap of the third and second line and
column, respectively, the system reads
\begin{equation}\label{eqn:dae-hami-aplus-resort}
  \begin{split}
  \begin{bmatrix}
    I & 0 & 0 & 0 \\
    0 & I & 0 & 0 \\
    0 & 0 & 0 & 0 \\
    0 & 0 & 0 & 0
  \end{bmatrix}
  \frac{d}{dt}
  \begin{bmatrix}
  V_{11} \\  \tilde V_{21} \\ V_{12} \\\tilde V_{22}
  \end{bmatrix}
  =
  \begin{bmatrix}
    \apo  &  -\boo   & \apot  & -\btos   \\
    0   & -\apo ^* & 0   & -\apto ^* \\
    \apto  & -\bto   & \apt  &  -\btt   \\
    0   & -\apot ^* & 0   & -\apt ^*
  \end{bmatrix}
  \begin{bmatrix}
  V_{11} \\  \tilde V_{21} \\ V_{12} \\ \tilde V_{22}
  \end{bmatrix}, \\
  \quad V_{11}(t_1)= I,
  \quad \tilde V_{21}(t_1)= S-\daPPii.
\end{split}
\end{equation}

Since \AP~is impulse-free, the block $\apt $ is invertible so that with the relation
\begin{equation}\label{eqn:resolve-v12-v22}
  \begin{split}
    \begin{bmatrix}
      V_{12} \\ \tilde V_{22}
    \end{bmatrix}&=-
    \begin{bmatrix}
      \apt  &  -\btt  \\
      0   & -\apt ^*
    \end{bmatrix}^{-1}
    \begin{bmatrix}
      \apto  & -\bto \\
      0   & -\apot ^* 
    \end{bmatrix}
    \begin{bmatrix}
      V_{11} \\ \tilde V_{21}
    \end{bmatrix}\\&=-
    \begin{bmatrix}
      \apt ^{-1} &  -\apt ^{-1}\btt \apt ^{-*}\\
      0   & -\apt ^{-*}
    \end{bmatrix}
    \begin{bmatrix}
      \apto  & -\bto \\
      0   & -\apot ^* 
    \end{bmatrix}
    \begin{bmatrix}
      V_{11} \\ \tilde V_{21}
    \end{bmatrix}\\&=-
    \begin{bmatrix}
      \apt ^{-1} \apto  & -\apt ^{-1} \bto +\apt ^{-1}\btt \apt ^{-*}\apot ^*\\
      0   & \apt ^{-*}\apot ^*
    \end{bmatrix}
    \begin{bmatrix}
      V_{11} \\ \tilde V_{21}
    \end{bmatrix}
  \end{split}
\end{equation}
we get the reduced system
\begin{equation}\label{eqn:dae-reduced-cl-hamiltonian}
  \begin{split}
  &\begin{bmatrix}
    I & 0  \\
    0 & I 
  \end{bmatrix}
  \frac{d}{dt}
  \begin{bmatrix}
  V_{11} \\  \tilde V_{21}
  \end{bmatrix}
  = \\
  & \begin{bmatrix}
    \apo  - \apot  \apt ^{-1} \apto  &  -[B_1 -\apot \apt ^{-1}B_2 ][B_1 -\apot \apt ^{-1}B_2 ]^*  \\
    0   & -(\apo ^* - \apto ^* \apt ^{-*}\apot ^*)
  \end{bmatrix}
  \begin{bmatrix}
  V_{11} \\  \tilde V_{21} 
  \end{bmatrix}, \\
  & \quad \quad \quad \quad \quad \quad \quad \quad \quad \quad \quad\quad V_{11}(t_1)= I,  \tilde V_{21}(t_1)= S-\daPPii.
\end{split}
\end{equation}
as it can be derived by means of the explicit representation of the Schur complement
\begin{equation*}
  \begin{split}
    S := 
    \begin{bmatrix}
      \apot  & -\btos \\
      0   & -\apto ^* 
    \end{bmatrix}
    \begin{bmatrix}
      \apt  &  -\btt  \\
      0   & -\apt ^*
    \end{bmatrix}^{-1}
    \begin{bmatrix}
      \apto  & -\bto \\
      0   & -\apot ^* 
    \end{bmatrix}=\\
    \begin{bmatrix}
      \apot  \apt ^{-1} \apto  & \apot \apt ^{-1}\btt \apt ^{-*}\apot ^*-\apot \apt ^{-1} \bto -\btos \apt ^{-*}\apot ^*\\
      0   & -\apto ^* \apt ^{-*}\apot ^*
    \end{bmatrix}.
  \end{split}
\end{equation*}

In what follows, we will use the abbreviations 
\begin{equation*}
  \bA:= \apo  - \apot  \apt ^{-1} \apto \quad \text{and}\quad \bB:=[B_1 -\apot
  \apt ^{-1}B_2 ].
\end{equation*}



\begin{theorem}\label{thm:v11-gDRE-PD}
  Consider the gDRE \eqref{eqn:dae-dif-ric} with $\daE$ semi-explicit as in
  Assumptions \ref{ass:E-semexp} and let \daF~be compatible as in Assumption
  \ref{ass:rangeF-rangeE}. 
  Let the coefficients $(\daA,\daB,\daC)$ be partitioned as in
  \eqref{eqn:partition-daABC} in accordance with $\daE$.  
  Let Assumption \ref{ass:exist-gare-sol-stab} hold, let $\daPP$ be the
  stabilizing solution to the gARE \eqref{eqn:dae-alg-ric}, and let
  $\daAP:=\daA-\daB\daB^*\daPP$ be partitioned as in \eqref{eqn:partition-ABV}.
  Let Assumption \ref{ass:dae-zdg-cond} hold and let \daP~be the solution to the
  gDRE \eqref{eqn:dae-dif-ric}.
  Then system \eqref{eqn:dae-aplus-hami-sys} has a unique solution
  \begin{equation*}
    \begin{bmatrix}
      V_1 \\ \tilde V_2
    \end{bmatrix}=
    \begin{bmatrix}
      V_{11} \\ V_{12} \\ \tilde V_{21} \\ \tilde V_{22}
    \end{bmatrix}
  \end{equation*}
  with $V_{11}(t)$ being invertible for $t\leq t_1$ and such that for $\daPD(t) := \daP(t) -\daPP$ it
  holds that
  \begin{equation}\label{eqn:form-of-PD}
    \begin{split}
      \daPD(t)& =
    \begin{bmatrix}
      \daPDii(t) & 0 \\
      \daPDit(t) & 0
    \end{bmatrix}
    =
    \begin{bmatrix}
      \daPDii(t) & 0 \\
      -\apt ^{-*}\apot ^*\daPDii(t) & 0
    \end{bmatrix} \\
    & =
    \begin{bmatrix}
      \tilde V(t)_{21}V(t)_{11}^{-1} & 0 \\
      -\apt ^{-*}\apot ^*\tilde V(t)_{21}V(t)_{11}^{-1} & 0
    \end{bmatrix}.
  \end{split}
  \end{equation}
\end{theorem}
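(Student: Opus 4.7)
The plan is to exploit the block-triangular structure of the transformed
Hamiltonian system \eqref{eqn:dae-aplus-hami-sys}, reducing the theorem to two
ingredients: a Riccati-type identity for $\daPDii$ whose coefficients coincide
with those of the reduced system \eqref{eqn:dae-reduced-cl-hamiltonian}, and
the invertibility of $V_{11}$ deduced from it. Existence and uniqueness of
solutions to \eqref{eqn:dae-aplus-hami-sys} is immediate, since
\eqref{eqn:dae-reduced-cl-hamiltonian} is a linear constant-coefficient ODE in
$(V_{11}, \tilde V_{21})$ and the algebraic components $V_{12}, \tilde V_{22}$
are then fixed by \eqref{eqn:resolve-v12-v22} using that $\apt$ is invertible.

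\textbf{Structure of $\daPD$ and reduced Riccati equation.} Subtracting the
gARE \eqref{eqn:dae-alg-ric} from the gDRE \eqref{eqn:dae-dif-ric} yields the
Lyapunov-type identity
\begin{equation*}
  \daE^{*}\dot{\daPD}
  = -\daAP^{*}\daPD - \daPD^{*}\daAP + \daPD^{*}\daB\daB^{*}\daPD,
  \qquad \daE^{*}\daPD(t_1) = \daF^{*}\daF - \daE^{*}\daPP.
\end{equation*}
By the choice $\daPt=\daPPtt$ in Theorem \ref{thm:dae-existence-dre-sol},
together with the analogous block-triangular form of $\daPP$, the difference
$\daPD$ has a zero right column, leaving the symmetric block $\daPDii$ and the
bottom-left block $\daPDit$. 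Using $K_2 = \apt^{*}$ and
$\aot^{*}-\daPPtt^{*}B_2B_1^{*}=\apot^{*}$, the explicit formula
\eqref{eqn:dae-dre-pto-resolved} for $\daPto$ and its counterpart for
$P_{+;21}$ give $\daPDit = -\apt^{-*}\apot^{*}\daPDii$; the same relation is
also forced by the vanishing of the $(1,2)$-block of the Lyapunov identity.
Substituting this into the $(1,1)$-block and simplifying by a Schur-complement
manipulation reduces the gDRE to the standard differential Riccati equation
\begin{equation*}
  -\ddaPDii = \bA^{*}\daPDii + \daPDii\bA - \daPDii\bB\bB^{*}\daPDii,
  \qquad \daPDii(t_1)=\daSi-\daPPii,
\end{equation*}
with exactly the coefficients $\bA$ and $\bB\bB^{*}$ of
\eqref{eqn:dae-reduced-cl-hamiltonian}.

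\textbf{Invertibility and identification.} Let $\hat V_{11}$ be the fundamental
matrix of the closed-loop reduced equation
$\dot{\hat V}_{11} = (\bA - \bB\bB^{*}\daPDii(t))\hat V_{11}$ with
$\hat V_{11}(t_1)=I$. Since $\daPDii$ is continuous on $(-\infty, t_1]$ by
Theorem \ref{thm:dae-existence-dre-sol}, $\hat V_{11}(t)$ is invertible for
every $t\leq t_1$. Setting $\hat Z := \daPDii \hat V_{11}$, the definition of
$\hat V_{11}$ gives
$\dot{\hat V}_{11} = \bA\hat V_{11} - \bB\bB^{*}\hat Z$,
while computing
$\dot{\hat Z} = \ddaPDii\hat V_{11} + \daPDii\dot{\hat V}_{11}$
and substituting the reduced Riccati equation simplifies to $-\bA^{*}\hat Z$.
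Since the initial conditions of $(\hat V_{11}, \hat Z)$ coincide at $t=t_1$
with those of $(V_{11}, \tilde V_{21})$, uniqueness for
\eqref{eqn:dae-reduced-cl-hamiltonian} gives $V_{11}=\hat V_{11}$ (hence
invertible) and $\tilde V_{21} = \daPDii V_{11}$, yielding
$\daPDii = \tilde V_{21}V_{11}^{-1}$. The main technical obstacle is the
block-matrix computation in the structure step, namely showing that the
elimination of $\daPDit$ in the $(1,1)$-block produces precisely $\bA$ and
$\bB\bB^{*}$; this is a Schur-complement manipulation that mirrors the
reduction leading to \eqref{eqn:dae-reduced-cl-hamiltonian}.
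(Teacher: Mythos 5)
Your proposal is correct and follows essentially the same route as the paper: subtract the gARE from the gDRE to obtain the difference Riccati equation for $\daPD$, use the zero right block column and the invertibility of $\apt$ to get $\daPDit=-\apt^{-*}\apot^*\daPDii$ and the reduced standard DRE for $\daPDii$ with coefficients $\bA$ and $\bB\bB^*$, and then connect to the reduced Hamiltonian system \eqref{eqn:dae-reduced-cl-hamiltonian} via Radon's lemma. The only (immaterial) difference is in the final identification: you construct $(\hat V_{11},\daPDii\hat V_{11})$ from the Riccati solution and invoke uniqueness of the linear ODE, whereas the paper first infers invertibility of $V_{11}$ from the closed-loop relation and then verifies directly that $\tilde V_{21}V_{11}^{-1}$ satisfies the difference Riccati equation.
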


\begin{proof}
  By Assumption \ref{ass:exist-are-sol-stab} and Theorem
  \ref{thm:dae-existence-dre-sol} the existence of $\daPD$ is ensured for $t\leq
  t_1$.  A direct computation that realizes the transformation of
  \eqref{eqn:dae-pptransf-pencil} for the associated Riccati equation reveals that \daPD~solves the generalized \emph{difference} Riccati equation (cp. \cite[p.
  994]{CalWW94}), i.e.
  \begin{equation*}
    -\daE^*\dot \daPD = \daAPs \daPD + \daPD^*\daAP - \daPD^*\daB\daB^*\daPD,
    \quad \daE^*\daPD(t_1)=\daF^*\daF - \daE^*\daPP.
  \end{equation*}
  Since the second block column of \daPD~is zero -- as it follows directly from
  $\daPD = \daP-\daPP$ -- and $\apt$ is invertible, equation
  \eqref{eqn:dae-dif-ric} implies that the left-lower block of $\daPD$ is given
  as $ -\apt ^{-*}\apot ^*\daPDii(t)$ and that $\daPDii$ is the solution to the
  Riccati equation
  \begin{equation*}  
    -\ddaPDii = \bAs \daPDii + \daPDii \bA - \daPDii \bB\bB^*\daPDii, \quad
    \daPDii(t_1) = S-\daPPii,
  \end{equation*}
  which in particular means that this Riccati equation has a global solution
  for $t\leq t_1$ despite the possibly indefinite initial condition. 
  
  From the existence of the Riccati solution \daPDii, we can infer the
  invertibility of $V_{11}(t)$ from the relation
  \begin{equation*}
    -\dot V_{11}(t) = (\bA-\bB\bB^*\daPDii(t))V_{11}(t), \quad V_{11}(t_1) = I,
  \end{equation*}
  which is a consequence of \emph{Radon's Lemma} (see, e.g, \cite[Thm. 4.1.1]{AboFIJ03}).

  Uniqueness follows from $V_{11}$ and $\tilde V_{21}$ being solutions to an ordinary
  linear differential equation, namely \eqref{eqn:dae-reduced-cl-hamiltonian}. 

  Thus, \daPD~as defined in \eqref{eqn:form-of-PD} is well-defined. 
  
  Following the lines of a proof for a standard result for ODEs \cite[180
  Theorem]{CalD91}, we directly
  confirm that it \daPD~solves the generalized difference Riccati equation
  \eqref{eqn:dae-dif-ric}. 
  
  For that we find that the summands of the left upper block of $-\daE ^* \dot
  \PD$ which are given as
\begin{equation*}
  -\frac{d}{dt} V_{21}V_{11}^{-1} =
  -\dot{\tilde V}_{21}V_{11}^{-1} + 
\tilde V_{21}V_{11}^{-1}\dot V_{11}V_{11}^{-1},
\end{equation*}
and which, by means of the equations for $\dot{ \tilde V}_{21}$ and $\dot V_{11}$ in
\eqref{eqn:dae-hami-aplus-resort} 
with $V_{12}$ and $V_{22}$ resolved via \eqref{eqn:resolve-v12-v22}, rewrite as
\begin{equation*}
  \begin{split}
    -\dot V_{21}V_{11}^{-1}&=(\apo ^* V_{21} - \apto ^*
    \aptms \apot ^* V_{21}) V_{11}^{-1}\\
                           &=(\apo ^*  - \apto ^* \aptms \apot ^* ) \daPDii\\
                           &=\apo ^* \daPDii + \apto ^* \aptms \apot ^* \daPDit
    \end{split}
  \end{equation*}
  and
  \begin{equation*}
    \begin{split}
      V_{21}&V_{11}^{-1}\dot V_{11}V_{11}^{-1}= \\
      &\quad =V_{21} V_{11}^{-1} \bigl([\apo  - \apot  \apt ^{-1} \apto]  V_{11} \\
      &\phantom{\quad =V_{21} V_{11}^{-1} \bigl(} -[B_1 -\apot \apt ^{-1}B_2
      ][B_1 -\apot \apt ^{-1}B_2 ]^* V_{21}\bigr) V_{11}^{-1} \\
      &\quad=\daPDii [\apo  - \apot  \apt ^{-1} \apto]  \\
      &\phantom{\quad=}- \daPDii [B_1 -\apot \apt ^{-1}B_2 ][B_1 -\apot \apt ^{-1}B_2 ]^* \daPDii 
      \\&\quad=\daPDii \apo  + \daPDit^*\apto  \\
        &\phantom{\quad=}- \daPDii B_1B_1^*\daPDii
      -\daPDii B_1B_2^*\daPDit-\daPDit^* B_2B_1^*\daPDii-\daPDit^*
      B_2B_2^*\daPDii,
    \end{split}
  \end{equation*}
sum up to the left upper block of 
\begin{equation*}
\APs \PD + \PD^*\AP - \PD^*\daB \daB^*\PD.
\end{equation*}
By the zero pattern of $\daPP$, the other blocks of $\PD^*\daB \daB^*\PD $ are
zero, whereas the other possibly nonzero blocks of $\daAPs\PD$ and $\PD^*\daAP$
sum up to zero, respectively, because of how the blocks of $\PD$ are related.
Thus, 
\begin{equation*}
-\daE ^*\dot \PD = \APs \PD + \PD^*\AP - \PD^*\daB \daB^*\PD
\end{equation*}
is fulfilled. 
Finally, by the structure of \daE, \daF, and, \daPD, the initial condition
$\daE^*\daPD(t_1)=\daF^*\daF - \daE^*\daPP$ reduces to $\daPDii(t_1)=S-\daPPii$
which is fulfilled as $\tilde V_{21}(t_1)V_{11}(t_1)=\tilde
V_{21}(t_1)=S-\daPPii$.
\end{proof}

From Theorem \ref{thm:v11-gDRE-PD}, we can directly deduce necessary conditions
for the convergence of $\daP(t)$ towards $\daPP$ as $t_1\to\infty$; cp. Lemma
\ref{lem:stilde} for the standard ODE case.

\begin{corollary}[of Theorem \ref{thm:v11-gDRE-PD}]\label{cor:dae-pdi-conv}
  The left upper block \daPDii~of \daPD~as defined in
  \eqref{eqn:form-of-PD} satisfies the relation
  \begin{equation}\label{eqn:dae-pdii-expformula}
    \daPDii(t) = \mxp{(t_1-t)\bA^*}\tbs(t_1-t)\mxp{(t_1-t)\bA}
  \end{equation}
  where
  \begin{equation}\label{eqn:dae-stilde}
  \tbs(\tau) := (S-\daPPii)[I+\bW(\tau)(S-\daPPii)]^{-1} 
\end{equation}
with
\begin{equation*}
  \bW(\tau): = \int_0^\tau \mxp{s\bA}\bB\bB^*\mxp{s\bA^*} \inva s
\end{equation*}
being well-defined. Moreover, $\daPDii(t) \to 0$ exponentially as $t_1\to \infty$, if, and only if, 
  \begin{equation*}
    I + \bW(S-\daPPii)
  \end{equation*}
  is nonsingular, where 
  \begin{equation*}  
    \bW:=\lim_{\tau\to\infty}\bW(\tau).
  \end{equation*}
\end{corollary}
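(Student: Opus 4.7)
The plan is to exploit Theorem \ref{thm:v11-gDRE-PD}, which already represents $\daPDii$ as $\tilde V_{21}\,V_{11}^{-1}$, and then to produce explicit formulas for $V_{11}$ and $\tilde V_{21}$ by exploiting the block upper-triangular structure of the reduced Hamiltonian system \eqref{eqn:dae-reduced-cl-hamiltonian}. Reading off the bottom block, $\tilde V_{21}$ satisfies the decoupled linear equation $\dot{\tilde V}_{21}=-\bA^{*}\tilde V_{21}$ with terminal datum $\tilde V_{21}(t_1)=S-\daPPii$, so integrating backwards in time yields
\begin{equation*}
  \tilde V_{21}(t)=\mxp{(t_1-t)\bA^{*}}(S-\daPPii).
\end{equation*}
For $V_{11}$ I would apply variation of constants to $\dot V_{11}=\bA V_{11}-\bB\bB^{*}\tilde V_{21}$ with $V_{11}(t_1)=I$. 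Writing $\tau=t_1-t$ this is a textbook calculation that reproduces the structure of formula \eqref{eqn:explct-U} for the reduced coefficients and gives
\begin{equation*}
  V_{11}(t)=\mxp{-(t_1-t)\bA}\bigl[I+\bW(t_1-t)(S-\daPPii)\bigr].
\end{equation*}
Multiplying the expressions for $\tilde V_{21}$ and $V_{11}^{-1}$ and recalling definition \eqref{eqn:dae-stilde} of $\tbs$ immediately yields the claimed identity \eqref{eqn:dae-pdii-expformula}.

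The well-definedness of $\bW(\tau)$ for any finite $\tau$ is clear, and the existence of $\bW=\lim_{\tau\to\infty}\bW(\tau)$ follows from Assumption \ref{ass:exist-gare-sol-stab}: the pair $(\daE,\daAP)$ is finite-dynamics stable and impulse-free, so by Lemma \ref{lem:findyn-stab} the reduced matrix $\bA=\apo-\apot\apt^{-1}\apto$ is Hurwitz, which forces exponential decay of $\mxp{s\bA}$ and convergence of the improper integral.

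For the \emph{if and only if} claim, the picture is a direct transplantation of the classical ODE analysis to the triple $(\bA,\bB,S-\daPPii)$. Sufficiency follows by applying Lemma \ref{lem:stilde} verbatim to these reduced data: if $I+\bW(S-\daPPii)$ is nonsingular, then $\tau\mapsto\tbs(\tau)$ is uniformly bounded on $[0,\infty)$, and combined with the Hurwitz property of $\bA$ the sandwiching exponentials in \eqref{eqn:dae-pdii-expformula} force $\|\daPDii(t)\|$ to decay exponentially in $t_1-t$. For necessity I would appeal to \cite[Thm.~2]{CalWW94}: singularity of $I+\bW(S-\daPPii)$ produces a vector $v$ along which $\tbs(\tau)v$ grows unboundedly, and this growth is not compensated by the exponential factors, so convergence $\daPDii(t)\to 0$ fails. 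The main obstacle I anticipate is the careful bookkeeping in the variation-of-constants derivation of $V_{11}$, specifically making the finite-horizon Gramian $\bW(\tau)$ appear in exactly the correct form; once this is settled, every subsequent step is a line-by-line reuse of Section \ref{sec:basic-notions-results} with $(\AP,B,\PP,S)$ replaced by $(\bA,\bB,0,S-\daPPii)$.
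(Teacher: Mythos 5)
Your proposal is correct and follows essentially the same route as the paper: variation of constants applied to the block upper-triangular reduced Hamiltonian system \eqref{eqn:dae-reduced-cl-hamiltonian} to get $\tilde V_{21}(t)=\mxp{(t_1-t)\bA^*}(S-\daPPii)$ and $V_{11}(t)=\mxp{-(t_1-t)\bA}[I+\bW(t_1-t)(S-\daPPii)]$, followed by stability of $\bA$ (Lemma \ref{lem:findyn-stab}) for the well-posedness of $\bW$ and a deferral to the classical results of \cite{CalWW94} for the equivalence with nonsingularity of $I+\bW(S-\daPPii)$. Your sign bookkeeping in the variation-of-constants step is in fact cleaner than the paper's displayed intermediate formula (whose reversed integration limits would flip the sign of the Gramian term), and it agrees with the stated identity \eqref{eqn:dae-stilde}.
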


\begin{proof}
  Relation \eqref{eqn:dae-pdii-expformula} and \eqref{eqn:dae-stilde} follow
  from the \emph{variation of constants} formular applied to
  \eqref{eqn:dae-reduced-cl-hamiltonian} that gives
  \begin{equation*}
    \tilde V_{21}(t) = \mxp{-(t-t_1)\bA^*}(S-\daPPii)
  \end{equation*}
  and
  \begin{equation*}
  \tilde V_{11}(t) =
\mxp{(t-t_1)\bA}\bigl (I+\int_{t_1}^t\mxp{-(s-t_1)\bA}\bB\bB^*\mxp{-(s-t_1)\bA^*}\inva
s (S-\daPPii)\bigr )
  \end{equation*}
  and the invertibility of $V_{11}(t)$.
  By stability of $\bA$ (cp. Lemma \ref{lem:findyn-stab}) well posedness of the
  improper integral that defines $\bW$ is ensured, and the equivalence of
  $\daPDii(t)\to 0$ and invertibility of $I + \bW(S-\daPPii)$ follows by
  \cite[Lem. 3]{CalWW94}.
\end{proof}

The condition for the convergence motivates the following assumption:

\begin{assumption}\label{ass:dae-stilde-invrtbl}
  Consider Problem \ref{prb-dae-fiti-optcont}, let Assumption
  \ref{ass:exist-gare-sol-stab} hold and let $\daE$ be semi-explicit as in
  Assumption \ref{ass:E-semexp} and \daAP~and $\daB \daB^*$ be partitioned as in
  \eqref{eqn:partition-ABV} and let $\daF$ be compatible with $\daE$ as in
  Assumption \ref{ass:rangeF-rangeE}. The matrix
  \begin{equation*}
    I + \bW(S-\daPPii)
  \end{equation*}
  is nonsingular, where 
\begin{equation*}
  \bW: = \int_0^\infty \mxp{s\bA}\bB\bB^*\mxp{s\bA^*} \inva s
\end{equation*}
  is the closed loop \emph{finite dynamics} reachability Gramian and where $S$
  is the left upper block of $\daF^*\daF$; cp. \eqref{dae:fsf}.
\end{assumption}

Another outcome of the existence of this structured solution to the gDRE is that
the corresponding closed loop system does not generate impulses. Since, the
closed loop system is time varying, the definition of \emph{impulse-freeness}
(Def. \ref{def:dae-imp-free}) does not apply. Instead, we directly derive a
representation of the closed loop system in which the differential and algebraic
parts are decoupled. 

\begin{corollary}[of Theorem \ref{thm:v11-gDRE-PD}]\label{cor:dae-cl-impfree}
  The closed loop system
  \begin{equation*}
    \daE \dot x(t) = (\daA -\daB \daB^* \daP(t))x(t), \quad \daE x(0) = \daE
    x_0,
  \end{equation*}
  can be written in decoupled form for $x=(x_1, x_2)$:
  \begin{equation}\label{eqn:cl-dcpld}
    \dot x_1(t) = \hat{\mathcal A}_1(t) x_1(t), \quad 0 = x_2(t) - \hat{\mathcal
    A}_2(t)x_1(t), \quad \mathcal E x(0)=\mathcal Ex_0.
  \end{equation}
\end{corollary}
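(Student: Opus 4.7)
The plan is to compute the closed-loop coefficient matrix block-wise using the structured form of $\daP(t) = \daPP + \daPD(t)$ established in Theorem \ref{thm:v11-gDRE-PD}, and then exploit the invertibility of $\apt$ (which follows from the impulse-freeness of $(\daE,\daAP)$ in Assumption \ref{ass:exist-gare-sol-stab}) to solve the algebraic part for $x_2$.

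First, I would partition $\daP(t)$ according to the semi-explicit form of $\daE$. Writing
\begin{equation*}
  \daP(t) = \daPP + \daPD(t) =
  \begin{bmatrix}
    \daPPii + \daPDii(t) & 0 \\
    P_{+;21} - \apt^{-*}\apot^* \daPDii(t) & \daPPtt
  \end{bmatrix},
\end{equation*}
the crucial observation is that the entire second block column of $\daPD(t)$ vanishes, by \eqref{eqn:form-of-PD}. Consequently the right block column of $\daB\daB^*\daP(t)$ depends only on $\daPP$, not on $\daPD(t)$, and a direct computation shows that the $(1,2)$ and $(2,2)$ blocks of the closed-loop matrix $\daA - \daB\daB^*\daP(t)$ equal $\apot$ and $\apt$, respectively. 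Explicitly, the closed-loop DAE reads
\begin{align*}
  \dot x_1(t) &= \hat A_{11}(t)\, x_1(t) + \apot\, x_2(t), \\
  0 &= \hat A_{21}(t)\, x_1(t) + \apt\, x_2(t),
\end{align*}
where $\hat A_{11}(t)$ and $\hat A_{21}(t)$ are the time-varying upper and lower blocks of the first block column that can be read off from the computation.

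Next, since $(\daE, \daAP)$ is impulse-free by Assumption \ref{ass:exist-gare-sol-stab} and $\daE$ is in semi-explicit form as in Assumption \ref{ass:E-semexp}, the $(2,2)$ block $\apt$ of $\daAP$ is invertible. Therefore the algebraic constraint can be solved for $x_2(t)$, yielding
\begin{equation*}
  x_2(t) = \hat{\mathcal A}_2(t)\, x_1(t) \quad \text{with} \quad \hat{\mathcal A}_2(t) := -\apt^{-1}\hat A_{21}(t),
\end{equation*}
and substituting back produces $\hat{\mathcal A}_1(t) := \hat A_{11}(t) + \apot\, \hat{\mathcal A}_2(t)$, giving the desired decoupled form \eqref{eqn:cl-dcpld}.

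The argument is essentially a block elimination, so there is no substantial obstacle; the only real content is the identification that the $(2,2)$ block of the closed-loop system is preserved as $\apt$ and hence invertible. This in turn rests entirely on the structural fact from Theorem \ref{thm:v11-gDRE-PD} that the second block column of $\daPD(t)$ is zero, which is why the feedback does not destroy the impulse-free character inherited from the stabilizing solution $\daPP$ of the gARE.
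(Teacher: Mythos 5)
Your proposal is correct and follows essentially the same route as the paper: both arguments rest on the observation that the second block column of $\daPD(t)$ vanishes, so the second block column of the closed-loop coefficient matrix coincides with that of $\daAP$, and then use the invertibility of $\apt$ (impulse-freeness of $(\daE,\daAP)$) to eliminate $x_2$ via the Schur complement. The paper additionally records the resulting explicit formulas $\hat{\mathcal A}_1(t) = \bA - \bB\bB^*\daPDii(t)$ and $\hat{\mathcal A}_2(t)= -\aptmo \apto + \aptmo B_2 \bB^* \daPDii(t)$, which your block elimination reproduces upon substitution.
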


\begin{proof}
  By the particular structure of \PD, it follows that the left lower block of
  $\daA -\daB \daB^*P(t) = \daA -\daB \daB^*(\PP + \PD)$ equals the left lower block of \AP, namely
  $\apt $. Since $\apt $ is invertible, with the help of corresponding Schur
  complement, the representation \eqref{eqn:cl-dcpld} of the DAE follows with 
  $\hat{\mathcal A}_1(t) := (\bA - \bB\bB^*\daPDii(t))$ and $\hat{\mathcal
  A}_2(t):= -\aptmo \apto + \aptmo B_2 \bB^* \daPDii(t)$.

\end{proof}

\begin{remark}
  From the representation \eqref{eqn:cl-dcpld} one can read off, that an
  inconsistent initial value will make in $x_2$ discontinuous at $t=0$ but will
  not induce impulses. In a more general context, the existence of such a
  decoupled representation is referred to as \emph{strangeness free}; cp.
  \cite[Thm. 3.17]{KunM06}.
\end{remark}

Next, we show the turnpike property of the homogeneous
optimal control problem with DAE constraints. 

\begin{theorem}\label{thm:dae-turnpike}
  Under the assumptions of Theorem \ref{thm:v11-gDRE-PD} and under Assumption
  \ref{ass:dae-stilde-invrtbl}, 
  Problem
  \ref{prb-dae-fiti-optcont} with $\yc=0$ and $\ye=0$ has a solution $(x, u) $ which 
   fulfills the estimate
  \begin{equation*}
    \|x(t) \| \leq \const (\mxp{t\bar \sigma} + \mxp{(t_1 -t)\bar \sigma})
  \end{equation*}
  and
  \begin{equation*}
    \|u(t) \| \leq \const (\mxp{t\bar \sigma} + \mxp{(t_1 -t)\bar \sigma})
  \end{equation*}
  with $\const $ independent of $t_1$ and where $\bar \sigma < 0$ is the spectral abscissa of $\bA$.
\end{theorem}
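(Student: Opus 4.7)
The plan is to reduce the DAE-constrained problem to a standard ODE problem in the differential component $x_1$, and then to transfer the Callier--Willems--Winkin estimate \eqref{eqn:calww63} to this reduced system. First I would observe that, by the characterization of optimality through the gDRE \eqref{eqn:dae-dif-ric} (with ansatz $p=\daP x$, $u=-\daB^*\daP x$), the optimal state obeys the closed-loop DAE $\daE\dot x=(\daA-\daB\daB^*\daP(t))x$ with $\daE x(0)=\daE x_0$. Corollary \ref{cor:dae-cl-impfree} decouples this system into $\dot x_1=\hat{\mathcal A}_1(t)x_1$ with $\hat{\mathcal A}_1(t)=\bA-\bB\bB^*\daPDii(t)$, and $x_2(t)=\hat{\mathcal A}_2(t)x_1(t)$ with $\hat{\mathcal A}_2(t)=-\aptmo\apto+\aptmo B_2\bB^*\daPDii(t)$. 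Thus it suffices to prove the turnpike estimate for $x_1$, as $x_2$ will inherit it from the uniform boundedness of $\hat{\mathcal A}_2(t)$ in $t$ and $t_1$, which follows from the uniform boundedness of $\daPDii$ provided by Corollary \ref{cor:dae-pdi-conv} together with Assumption \ref{ass:dae-stilde-invrtbl}.

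Next I would exploit the fact that the reduced system in $x_1$ has precisely the Riccati-closed-loop structure treated in \cite{CalWW94}. Indeed, Theorem \ref{thm:v11-gDRE-PD} shows that $\daPDii$ solves the difference Riccati equation
\begin{equation*}
  -\ddaPDii=\bAs\daPDii+\daPDii\bA-\daPDii\bB\bB^*\daPDii,\quad\daPDii(t_1)=\daSi-\daPPii,
\end{equation*}
which is the standard DRE with coefficients $(\bA,\bB)$, zero output weighting, and stabilizing algebraic Riccati solution $0$. By the finite-dynamics stability of $\bA$ (Lemma \ref{lem:findyn-stab} applied to the impulse-free pair $(\daE,\daAP)$) the closed-loop reachability Gramian $\bW$ is well defined, and Assumption \ref{ass:dae-stilde-invrtbl} is the exact analogue of Assumption \ref{ass:szero-invertible} for this reduced system. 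I would then apply the Callier--Willems--Winkin estimate \eqref{eqn:calww63} in these shifted coordinates to obtain
\begin{equation*}
  \|x_1(t)-\mxp{t\bA}x_{1,0}\|\leq\const\,\mxp{\bar\sigma t_1}\mxp{\bar\sigma(t_1-t)},
\end{equation*}
and combine it with the exponential decay $\|\mxp{t\bA}x_{1,0}\|\leq\const\,\mxp{\bar\sigma t}$ as in \eqref{eqn:tp-from-calww} to conclude the turnpike estimate for $x_1$.

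For the control, the feedback formula $u(t)=-\daB^*(\daPP+\daP_\Delta(t))x(t)$ together with the uniform boundedness of $\daP(t)$ (cp. \eqref{eqn:form-of-PD} and Corollary \ref{cor:dae-pdi-conv}) gives $\|u(t)\|\leq\const\,\|x(t)\|$, so the estimate for $u$ follows directly from the estimate for $x=(x_1,x_2)$. The main obstacle I anticipate is the bookkeeping in the transfer of the Callier--Willems--Winkin derivation to the reduced system: while the algebraic structure of $\daPDii$ matches the standard DRE with stabilizing solution zero, one must carefully verify that the explicit fundamental-solution formula of Lemma \ref{lem:fundamental-solution} specializes correctly to the shifted Riccati (with $\PP\to 0$ and $S\to\daSi-\daPPii$), and that the uniform bound on $\tbs$ combines with $\hat{\mathcal A}_2(t)$ to preserve the stated exponential rates in the full state variable $x$.
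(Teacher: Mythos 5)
Your proposal is correct and follows essentially the same route as the paper: both reduce to the differential component $x_1$ governed by the closed-loop matrix $\bA-\bB\bB^*\daPDii(t)$ (the paper phrases this via the Hamiltonian solution $x_1(t)=V_{11}(t)V_{11}(t_0)^{-1}x_1(t_0)$, which is the same fundamental solution as in your decoupled form), apply the Callier--Willems--Winkin estimate \eqref{eqn:calww63} with $\AP$ replaced by $\bA$, recover $x_2$ from the algebraic relation using the boundedness of $\daPDii$ from Corollary \ref{cor:dae-pdi-conv}, and obtain the estimate for $u$ from the feedback form with $\daP=\daPP+\daPD$ bounded. The specialization you flag as a potential obstacle is exactly what Theorem \ref{thm:v11-gDRE-PD} and Corollary \ref{cor:dae-pdi-conv} already provide, so no additional verification is needed.
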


\begin{proof}
  With $(V_{11}, V_{12}, \tilde V_{21}, \tilde V_{22})$ solving
  \eqref{eqn:dae-aplus-hami-sys}, with the relation \eqref{eqn:resolve-v12-v22},
  and with $V_{11}(t)$ being invertible, we have that 
  \begin{equation*}
    \begin{split}
    \begin{bmatrix}
      x_1 \\ x_2 \\ \tilde p_1 \\ \tilde p_2 
    \end{bmatrix}
    &= 
    \begin{bmatrix}
      V_{11} \\ V_{12} \\ \tilde V_{21} \\ \tilde V_{22} 
    \end{bmatrix} V_{11}^{-1}(t_0)x_0
     =
    \begin{bmatrix}
      I & 0 \\ s_{11} & s_{12} \\ 0 & I \\ 0 & s_{22} 
    \end{bmatrix}
    \begin{bmatrix}
      V_{11} \\ \tilde V_{21}
    \end{bmatrix} V_{11}^{-1}(t_0)x_0 \\
    & =
    \begin{bmatrix}
      I & 0 \\ \apt ^{-1} \apto  & -\apt ^{-1} \bto +\apt ^{-1}\btt \apt ^{-*}\apot ^* \\
      0 & I \\ 0 & \apt ^{-*}\apot ^* 
    \end{bmatrix}
    \begin{bmatrix}
      V_{11} \\ \tilde V_{21}
    \end{bmatrix} V_{11}^{-1}(t_0)x_0.
  \end{split}
  \end{equation*}
  defines the solution to \eqref{eqn:dae-aplus-hami-sys}, and, in particular, the optimal state as in
  \eqref{eqn:dae-hami-sys}. Multiplication of the first row gives that 
  \begin{equation*}
    V_{11}^{-1}(t)x(t) = V_{11}^{-1}(t_0)x(t_0)
  \end{equation*}
  so that, with $\daPDii = V_{21}V_{11}^{-1}$ (cp. Thm. \ref{thm:v11-gDRE-PD}), we
  get the following formula for the optimal state
  \begin{equation*}
    \begin{bmatrix}
      x_1(t) \\ x_2(t)
    \end{bmatrix}=
    \begin{bmatrix}
      V_{11}(t)V_{11}(t_0)^{-1}x_1(t_0) \\
      \apt ^{-1} \apto x_1(t) - 
      [\apt ^{-1} \bto +\apt ^{-1}\btt \apt ^{-*}\apot ^*]\daPDii(t)x_1(t)
    \end{bmatrix}.
  \end{equation*}
  Then the turnpike property for $x_1$ follows by the arguments for the ODE
  case \cite[Thm. 4]{CalWW94} as follows: 
  From
  \begin{equation*}
    \|x_1(t) - \mxp{\bA t} \| \leq \const  \mxp{t_1\bsigma }\mxp{(t_1-t)\bsigma};
  \end{equation*}
cp. \cite[Eqn. (63)]{CalWW94}, we conclude with $\mxp{t_1 \bsigma} \leq 1$
independent of $t_1$, $\bA$ being stable, and an application of the triangle
inequality as in \eqref{eqn:tp-from-calww} that 
  \begin{equation*}
    \|x_1(t) \| \leq \const  (\mxp{(t_1-t)\bsigma}+\mxp{t\bsigma}).
  \end{equation*}
  The same type of estimate for $x_2$ follows from the turnpike property of $x_1$ since
  $\daPDii$ is
  bounded by Corollary \ref{cor:dae-pdi-conv}.

  The turnpike estimate for $u(t) = -\daB\daB^*\daP(t)x(t) =
  -\daB\daB^*\daPP x(t)-\daB\daB^*\daPD(t)x(t)$ 
  follows as in \eqref{eqn:turnpike-u}.
\end{proof}

We summarize and comment on the assumptions and the results of this chapter on
the optimal control of the linear descriptor system \eqref{eqn:dae} with a
quadratic cost functional defined in Problem \ref{prb-dae-fiti-optcont}.

\vspace{.075in}
\fbox{
  \begin{minipage}{.9\linewidth}
Assumptions:
\begin{enumerate}
  \item Without loss of generality: $\daE$ is semi-explicit (Assumption
    \ref{ass:E-semexp}).
  \item To enable existence of solutions to the first order optimality
    conditions \eqref{eqn:first-order-opti-dae}: Compatibility of $\daF$ and
    \daE~(Assumption \ref{ass:rangeF-rangeE}).
  \item In line with the basic assumption for the ODE case: Existence of a
    stabilizing solution to the generalized algebraic Riccati equation
    \eqref{eqn:dae-alg-ric} including regularity of the matrix pair (Assumption
    \ref{ass:exist-gare-sol-stab}).
  \item To ensure existence of global solutions to the reduced closed loop
    Riccati equation \eqref{eqn:dae-dre-poo-resolved}: A spectral condition on
    the coefficients (Assumption \ref{ass:dae-zdg-cond}).
  \item In line with the relevant condition in the ODE case: Compatibility of
    the terminal constraint $S$, the solution to the gARE
    \eqref{eqn:dae-alg-ric}, and the relevant \emph{reachability Gramian}
    (Assumption \ref{ass:dae-stilde-invrtbl}).
\end{enumerate}
\end{minipage}
}

\vspace{.075in}

Certainly, Assumption \ref{ass:dae-zdg-cond} is somewhat unpleasant and, because
of its dependency on $\daPt$ not readily confirmed or discarded for a given
system. Nonetheless, it generalizes the standard assumption for ODE systems that
ensures the definiteness of the cost functional in the presence of cross terms
in the costs or a feedthrough term in the system.

With these assumptions, the following results have been derived:

\vspace{.075in}
\fbox{
  \begin{minipage}{.9\linewidth}
Summary of results:
\begin{enumerate}
  \item Existence of solutions to the generalized differential Riccati equation
    (Theorem \ref{thm:dae-existence-dre-sol}).
  \item Representation of the difference $\daP(t)-\daPP$ that implies
    that the closed loop system is \emph{impulse-free} (Theorem
    \ref{thm:v11-gDRE-PD} and Corollary \ref{cor:dae-cl-impfree}).
  \item Convergence of $\daP(t) \to \daPP$ as $t_1\to\infty$ and turnpike property
    of the \emph{homogeneous} optimal control problem with DAE constraints
    (Corollary \ref{cor:dae-pdi-conv} and Theorem \ref{thm:dae-turnpike}).
\end{enumerate}
\end{minipage}
}

\section{The Affine DAE LQR Problem}\label{sec:dae-affine-lqr}
In this section, we study the optimal control problem with nonzero target states
$\yc$ and $\ye$ and get back to the question of what \emph{the} steady-state
optimal control problem is for a descriptor system.

Similarly to the ODE case, the feedthrough $w$ is defined via
\begin{equation*}
  -\daE\dot w = (\daA^*-\daPs(t)\daB \daB^*)w - \daC^*\yc, \quad \daE^*w(t_1) =
  -\daF^*\ye,
\end{equation*}
which we rewrite as 
\begin{equation*}
  -\daE\dot w = \daAPs w - \daPDs(t)\daB \daB^* w - \daC^*\yc, \quad \daE^*w(t_1) =
  -\daF^*\ye.
\end{equation*}

We partition the variables and coefficients
\begin{equation*}
  w = 
  \begin{bmatrix}
    w_1 \\ w_2
  \end{bmatrix}, \quad
  \daC^* = 
  \begin{bmatrix}
    C_1^* \\ C_2^*
  \end{bmatrix}, \quad \text{and }
  \daF^* = 
  \begin{bmatrix}
    F_1^* \\ 0
  \end{bmatrix}
\end{equation*}
in accordance with $\daE$, $\daAP$, $\daB \daB^*$, and $\daPD$, as in
\eqref{eqn:semexp-E}, \eqref{eqn:partition-ABV}, and \eqref{eqn:form-of-PD},
respectively, and write
\begin{equation*}
  \begin{split}
    \daAPs - \daPDs(t) \daB \daB^* &=
  \begin{bmatrix} \apo^* & \apto^* \\ \apot^* & \apt^* \end{bmatrix}
  -
    \begin{bmatrix}
      \daPDii(t) & -\daPDii(t)\apot\aptmo \\0& 0
    \end{bmatrix}
  \begin{bmatrix}
    \boo & \bto \\ \btos & \btt
  \end{bmatrix}\\
  &=\begin{bmatrix} \apo^*-\daPDii(t)\bB B_1^* & \apto^{-*}
  -\daPDii(t)\bB B_2^* \\
\apot^* & \apt^* \end{bmatrix}.
\end{split}
\end{equation*}
In what follows, we will omit the time dependency of \daPD. With the relation
\begin{equation}\label{eqn:dae-wtwo}
  w_2= -\aptms\apto^*w_1 +\aptms C_2^*\yc
\end{equation}
we can eliminate $w_2$ from the equations and consider only
\begin{equation}\label{eqn:opti-wone}
    -\dot w_1 = (\bAs-\daPDii(t)\bB\bB^* )w_1 - \bC ^*\yc + \daPDii(t)\bB B_2^*\aptms
    C_2^*\yc,
\end{equation}
with the abbreviations
\begin{equation*}
  \bA:= \apo-\apot\aptmo\apto, \quad \bB:=B_1-\apot\aptmo B_2, \quad\text{and }
  \bC:=C_1-C_2\aptmo\apto
\end{equation*}
as they have been used before.

By the same procedure, we derive the expressions for the parts of the optimal
state as
\begin{equation}\label{eqn:opti-xtwo}
  x_2 = -\aptmo \apto x_1 + \aptmo B_2 \bB^* (\daPDii x_1 + w_1) +
  \aptmo\btt\aptms C_2 \yc
\end{equation}
and
\begin{equation}\label{eqn:opti-xone}
  \dot x_1 = (\bA - \bB\bB^*\daPDii(t))x_1- \bB\bB^*w_1 - \bB B_2 ^* \aptms C_2 \yc.
\end{equation}


The preceding derivations show that if the Riccati solution exists, then the
optimal states $x=(x_1, x_2)$ decouple such that $x_1$ reads like a solution to an optimal
control problem with ODE constraints, and $x_2$ is in a direct algebraic
relation with $x_1$. Accordingly, we can state the turnpike property for Problem
\ref{prb-dae-fiti-optcont} with similar arguments as for the standard LQR case.

\begin{theorem}\label{thm:dae-affine-turnpike}
  Consider Problem \ref{prb-dae-fiti-optcont} with
  the costs defined through $t_1$, $\daC$, $\daF$, and target states $\yc$ and
  $\ye$, and subject to the DAE \eqref{eqn:dae} with coefficients $(\daE, \daA,
  \daB)$.

  Assume that $\daE$ is semi-explicit, that $(\daE, \daA)$ is regular, and that
  the gARE \eqref{eqn:dae-alg-ric} has a stabilizing solution $\daPP$
  (Assumptions \ref{ass:exist-gare-sol-stab} and \ref{ass:E-semexp}).
  
  Assume that $\daF$ is compatible with $\daE$ so that with Assumptions
  \ref{ass:dae-zdg-cond} the gDRE \eqref{eqn:dae-dif-ric} has a solution $\daP$
  with $\daPD=\daP-\daPP$ as in \eqref{eqn:form-of-PD}.  

  Assume that the relevant part of $\daF$ is compatible with the relevant part
  of $\daPP$ such that Assumption \ref{ass:dae-stilde-invrtbl} is fulfilled.
  
  Then the optimal control has a solution $(x,u)$ with $x$ and $u$ satisfying
  the estimates
  \begin{equation*}
    \|x(t)-x_s\| \leq \const (\mxp{t\bsigma}+\mxp{(t_1-t)\bsigma},
  \end{equation*}
  and
  \begin{equation*}
    \|u(t)-u_s\| \leq \const (\mxp{t\bsigma}+\mxp{(t_1-t)\bsigma},
  \end{equation*}
  with $\const $ independent of $t_1$ and where $\bsigma < 0$ is the spectral abscissa
  of $\bA$ and 
  \begin{equation}\label{eqn:dae-x-turnpike}
    x_s = 
    \begin{bmatrix}
      x_{s;1}
    \\
    -\aptmo \apto x_{s;1} + \aptmo B_2 (\bB^*\bA^{-*}\bC^* + B_2^* \aptms C_2^*)\yc 
    \end{bmatrix}
  \end{equation}
  with $x_{s;1}:=
      (\bA^{-1}\bB\bB^*\bA^{-*}\bC^*  + \bA^{-1}\bB B_2^* \aptms C_2^* )\yc$
  and
  \begin{equation}\label{eqn:dae-turnpike-u}
    u_s = - \daB^*\daPP x_s - \bB^*\bA^{-*}\bC^*\yc - B_2^*\aptms C_2^*\yc.
  \end{equation}
\end{theorem}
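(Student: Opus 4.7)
The plan is to reduce the theorem to the ODE affine LQR analysis of Section \ref{sec:ode-lqr-affine} by exploiting the decoupled representation \eqref{eqn:opti-xone}--\eqref{eqn:opti-wone} for the first components $(x_1, w_1)$ together with the algebraic relations \eqref{eqn:opti-xtwo} and \eqref{eqn:dae-wtwo} that recover $(x_2, w_2)$ from $(x_1, w_1)$. Under the stated assumptions, $\bA$ is stable with spectral abscissa $\bsigma<0$ by Lemma \ref{lem:findyn-stab}, $\daPDii(t)$ decays exponentially with rate $\bsigma$ as $t_1\to\infty$ by Corollary \ref{cor:dae-pdi-conv}, and the pair $(x_1, w_1)$ satisfies forward and backward equations that are structurally identical to the standard ODE LQR system of Section \ref{sec:ode-lqr-affine} with $(A, B, C, P-\PP)$ replaced by $(\bA, \bB, \bC, \daPDii)$, \emph{augmented} only by the constant drive $-\bB B_2^*\aptms C_2^*\yc$ in \eqref{eqn:opti-xone} and the time-dependent term $\daPDii(t)\bB B_2^*\aptms C_2^*\yc$ in \eqref{eqn:opti-wone}.

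With this identification, I would repeat the construction of Lemma \ref{lem:fundamental-solution}, Lemma \ref{lem:formula-w} and Proposition \ref{lem:formula-x} for $(\bA,\bB,\bC,\daPDii)$. The constant drive contributes an extra steady-state term $\bA^{-1}\bB B_2^*\aptms C_2^*\yc$ which, combined with the standard contribution $\bA^{-1}\bB\bB^*\bA^{-*}\bC^*\yc$, reproduces the formula for $x_{s;1}$ given in \eqref{eqn:dae-x-turnpike}; the time-dependent perturbation is absorbed into the decaying remainder because $\|\daPDii(t)\|\le \const\,\mxp{(t_1-t)\bsigma}$ by \eqref{eqn:dae-pdii-expformula}. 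This yields
\[
  \|x_1(t)-x_{s;1}\|+\|w_1(t)-w_{s;1}\|\le \const\,(\mxp{t\bsigma}+\mxp{(t_1-t)\bsigma}),
\]
with the steady-state feedforward $w_{s;1}=\bA^{-*}\bC^*\yc$ obtained by equating \eqref{eqn:opti-wone} at $\dot w_1=0$, $\daPDii=0$.

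The turnpike for $x_2$ then follows by subtracting \eqref{eqn:opti-xtwo} evaluated at $(x_{s;1}, w_{s;1}, 0)$ from its dynamic counterpart,
\[
  x_2(t)-x_{s;2}=-\aptmo\apto(x_1(t)-x_{s;1})+\aptmo B_2\bB^*\daPDii(t)x_1(t)+\aptmo B_2\bB^*(w_1(t)-w_{s;1}),
\]
with each summand bounded by $\const\,(\mxp{t\bsigma}+\mxp{(t_1-t)\bsigma})$ via the previous step, the boundedness of $x_1$, and the decay of $\daPDii$; the explicit form of $x_{s;2}$ in \eqref{eqn:dae-x-turnpike} is read off directly from \eqref{eqn:opti-xtwo} at the steady state. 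For the control, I would decompose $u=-\daB^*\daPP x-\daB^*\daPD x-\daB^* w$ against $u_s=-\daB^*\daPP x_s-\daB^* w_s$ and use \eqref{eqn:dae-wtwo} at the steady state to identify $-\daB^* w_s=-\bB^*\bA^{-*}\bC^*\yc-B_2^*\aptms C_2^*\yc$, matching \eqref{eqn:dae-turnpike-u}; the turnpike estimate then follows by the triangle inequality exactly as in \eqref{eqn:turnpike-u}.

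The principal obstacle is the bookkeeping in the adaptation of Proposition \ref{lem:formula-x}: one must carefully track the contributions of the two additional drive terms through the variation-of-constants formulas, verifying that the constant drive is entirely captured by the new steady-state term $\bA^{-1}\bB B_2^*\aptms C_2^*\yc$ while the time-dependent drive, being pointwise multiplied by the exponentially decaying $\daPDii(t)$, contributes only to the remainder $g(t,t_1)$. No new stability or decay arguments beyond those used in Section \ref{sec:ode-lqr-affine} and Corollary \ref{cor:dae-pdi-conv} are required.
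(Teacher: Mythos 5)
Your proposal is correct and follows essentially the same route as the paper's proof: reduce to the ODE affine LQR machinery of Section \ref{sec:ode-lqr-affine} via the decoupled system for $(x_1,w_1)$ with coefficients $(\bA,\bB,\bC,\daPDii)$, absorb the extra constant drive into the steady state $x_{s;1}$ and the $\daPDii$-weighted drive into the decaying remainder, recover $x_2$ and $u$ from the algebraic relations \eqref{eqn:opti-xtwo}, \eqref{eqn:dae-wtwo}, and conclude as in \eqref{eqn:turnpike-u}. The identification of $w_{s;1}=\bA^{-*}\bC^*\yc$ and the resulting formulas for $x_s$ and $u_s$ match the paper's.
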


\begin{proof}
  We consider equations \eqref{eqn:opti-wone} and \eqref{eqn:opti-xone} for the
  parts $w_1$ and $x_1$, respectively. As laid out in the proof of Theorem
  \ref{thm:v11-gDRE-PD}, the part $\daPD$ solves the
  differential Riccati equation
  \begin{equation*}
    -\ddaPDii = \bAs \daPDii + \daPDii \bA - \daPDii \bB\bB^*\daPDii, \quad
    \daPDii(t_1) = S_1-\daPPii
  \end{equation*}
  and exists for all $t\leq t_1$. Thus, Lemma \ref{lem:fundamental-solution} applies and provides the formulas for the relevant fundamental solution as
  \begin{equation*}
    U(t) =
    \mxp{-(t_1-t)\bA}\bigl(I-[\bW-\mxp{(t_1-t)\bA}\bW\mxp{(t_1-t)\bAs}](\daPPii-S_1)\bigr).
  \end{equation*}
  As for the ODE case, we conclude that the feedthrough can be written as 
  \begin{equation}\label{eqn:dae-wones}
    w_1(t) = \bA^{-*}\bC^*y_c + g(t, t_1)
  \end{equation}
  with a remainder term $g(t,t_1)$ that is dominated by $\mxp{(t_1-t)\bA}$; cp.
  \eqref{eqn:w-homo} and \eqref{eqn:w-partic}. The only difference to the ODE case
  lies in the additional term
  $\daPDii(t)\bB\aptms C_2^*\yc$ to $w_1$ as defined \eqref{eqn:dae-wones} which,
  however, can be included in the estimates by the decaying behavior of
  $\daPDii$ as it is ensured by Assumption \ref{ass:dae-stilde-invrtbl} and
  Corollary \ref{cor:dae-pdi-conv}. And again, that part of $x_1$
  that cannot be bounded by $\mxp{(t_1-t)\bar \sigma}$, where $\bar \sigma$ is
  the spectral abscissa of $\bA$, is given by the integral operator $I_1$ (cp. \eqref{eqn:formula-x})
  applied to the constant parts in the right-hand side of \eqref{eqn:opti-xone}. Thus,
  the turnpike for $x_1$ is given by the constant part of
  \begin{equation*}
  \begin{split}
    -\int_0^t \mxp{(t-s)\bA}\bigl(\bB\bB^*&\bAms \bC^*y_c + \bB B_2^* \aptms C_2 \yc\bigr)\inva s \\
     &= (I-\mxp{t\bA})\bA^{-1} \bigl [\bB\bB^*\bAms \bC^*y_c + \bB B_2^*\aptms C_2 \yc\bigr].
  \end{split}
\end{equation*}
which is as in the first component of \eqref{eqn:dae-x-turnpike}. The turnpike
for $x_2$ as in the second component of \eqref{eqn:dae-x-turnpike} follows from
formula \eqref{eqn:opti-xtwo} in combination with the decaying behavior of
$\daPDii$ and the estimate for $w_1$ given in \eqref{eqn:dae-wones}.

With the formulas \eqref{eqn:dae-wones} and \eqref{eqn:dae-wtwo}
  for $w_1$ and $w_2$, the optimal control writes as
\begin{align*}
  u(t) &= - \daB^*(\daP x(t) + w(t)) \\
       &= -\daB^* \daPP x(t) -\daB^* \daPD(t)x(t)
  -B_1 ^*w_1(t) - B_2^* w_2(t) \\
       &= -\daB^*\daPP (x(t)-x_s) -\bB^*\daPDii(t)x_1(t)-\bB^* g(t, t_1) \\
       &\phantom{=\quad}-
       \bB^*\bA^{-*}\bC^*y_c-B_2^*\aptms C_2^*\yc - \daB^*\daPP x_s \\
       &= -\daB^*\daPP (x(t)-x_s) -\bB^*\daPDii(t)x_1(t)-\bB^* g(t, t_1) + u_s
\end{align*}
from where the estimate \eqref{eqn:dae-turnpike-u} for $u(t)-u_s$ follows with
the same arguments as for \eqref{eqn:turnpike-u}.
\end{proof}

With that we have established the existence of the turnpike for a class of
linear-quadratic optimization problem with DAE constraints. 

It
remains to investigate whether, like in the ODE case, the turnpike can be
defined as the solution to an associated steady-state optimal control problem. For that we observe that with $p_s:=\daPP x_s+w_s$, with 
$w_s$ being the time constant parts of $w$ and with $u_s$ as defined in
Theorem \ref{thm:dae-affine-turnpike}, the triple $(x_s, p_s, u_s)$ is a
critical point 
of the \emph{Lagrange function} 
\begin{equation*}
  \mathcal L( x,  p, u) = \frac 12 \|\daC  x - y_c \|^2 + \frac
  12 \| u\|^2 +
   p^*(\daA  x+\daB  u).
\end{equation*}
Still, since the omission of the time derivative discards the DAE structure, the turnpike is not that solution that arises from 
\begin{equation*}
  \frac 12 \|\daC x - y_c \|^2 + \frac 12 \|u\|^2 \to \min_u, \text{ subject to
  } \daA x + \daB u =0. 
\end{equation*}
In particular, the feedback structure $u_s=-\daB\daB^*\daPP x_s$ that makes the
left-lower block of $\daA-\daB\daB^*\daPP$ regular is not ensured.

\section{Conclusion and Discussion}

The presented results show that classical system theoretic results well apply to
prove turnpike properties for LQR problems constraint by standard linear state
space systems. Under the assumption of impulse controllability, a descriptor
system can be controlled such that it is basically an ODE with an additional but
well separated algebraic part so that similar arguments can be used to confer
turnpike properties of LQR problems with DAE constraints.

The characterization of the turnpike for DAE problems as an optimal steady
state is still undecided. For time-dependent problem, we succeeded in
establishing an \emph{underlying Hamiltonian ODE system} for the optimality conditions 
(cp. \eqref{eqn:dae-reduced-cl-hamiltonian}). In the steady-state regime,
similar operations did not lead to a system that allowed for a characterization
as an optimality system like \eqref{eqn:fon}. A possible approach would be to
establish results for impulse-free matrix pairs $(\mathcal E, \mathcal A)$, e.g.
through the equivalence to ODE systems with feedthrough, and then extend to
impulse-controllable systems, e.g., through the \emph{feedback equivalence
form} (see \cite{ReiRV15}). 

In line with the literature on turnpike phenomena in control systems, natural
extensions of the presented results could consider periodic orbits as turnpikes
(as in \cite{ArtL85, TreZZ18}),
PDE formulations (as in, e.g., \cite{GruSS19}), or particular nonlinear
phenomena (as in \cite{Pig20a,SakPZ19}) for the differential algebraic case.

As for the theory of control of DAEs, an immediate strengthening of the results
could be achieved by removing the assumption on impulse controllability. A more
general framework will also consider indefinite cost functionals and suitable
replacements for the Riccati equations, as they are used recent works on
singular feedback control \cite{BhaP19} or infinite time horizon problems
\cite{ReiV19}. The adaption of the concepts of \emph{Lur'e
equations} -- as they are a key tool in \cite{ReiV19} -- to tracking problems on
finite time horizons has not been investigated yet. However, the related concept
of \emph{storage functions} and their connection to turnpike properties, as
discussed in \cite{GrG21}, may well be used for a general theory for DAEs that
does not resort to Riccati equations.


\section*{Acknowledgments}
This project has received funding from the European Research Council (ERC) under the European Union's Horizon 2020 research and innovation programme (grant agreement No. 694126-DyCon). 

The work of the second author has been funded by the Alexander von
Humboldt-Professorship program, the European Union's Horizon 2020 research and
innovation programme under the Marie Sklodowska-Curie grant agreement
No.765579-ConFlex, grant MTM2017-92996-C2-1-R COSNET of MINECO (Spain), ICON of
the French ANR and Nonlocal PDEs: Analysis, Control and Beyond, AFOSR Grant
FA9550-18-1-0242, and Transregio 154 Project 
\emph{Mathematical Modelling, Simulation and Optimization using the Example of
Gas Networks} of the German DFG.

\end{document}